\title{Simple transitive $2$-representations of Soergel bimodules in type $B_2$}
\author{Jakob Zimmermann}
\date{\today}
\newtheorem{theorem}{Theorem}[section]
\newtheorem{proposition}[theorem]{Proposition}
\newtheorem{lemma}[theorem]{Lemma}
\newtheorem{corollary}[theorem]{Corollary}
\theoremstyle{definition}
\newtheorem{example}[theorem]{Example}
\newtheorem{remark}[theorem]{Remark}
\newcommand{\setof}[2]{\{#1\;|\; #2\}}
\newcommand{\C}{\mathbb{C}}
\newcommand{\R}{\mathbb{R}}
\newcommand{\Z}{\mathbb{Z}}
\renewcommand{\l}{\mathcal{L}}
\renewcommand{\j}{\mathcal{J}}
\renewcommand{\c}{\clubsuit}
\begin{document}

\begin{abstract}
We prove that every simple transitive $2$-representation of the
fiat $2$-category of Soergel bimodules (over the coinvariant algebra)
in type $B_2$ is equivalent to a cell $2$-representation. 
We also describe some general properties of the 
$2$-category of Soergel bimodules for arbitrary finite dihedral groups.
\end{abstract}

\maketitle

\section{Introduction}
\noindent

Understanding the approach of categorification via $2$-representation theory
of $2$-categories has its roots in the papers
\cite{BernsteinFrenkelKhovanov99,ChuangRouquier08}. 
In 2010, Mazorchuk and Miemietz started the series  
\cite{MazorchukMiemietz1,MazorchukMiemietz2,MazorchukMiemietz3,MazorchukMiemietz4,MazorchukMiemietz5,MazorchukMiemietz6} 
of papers in which they systematically study the $2$-representation theory of finitary 
and fiat  $2$-categories. 
The latter  can be thought of as analogues of finite dimensional algebras 
(for finitary $2$-categories) or finite dimensional
algebras with involution (for fiat $2$-categories). The first paper 
\cite{MazorchukMiemietz1} in the series introduces the notion 
of {\em cell $2$-representations} as a possible candidate for the notion of 
``simple'' $2$-representations in this setting.

Cell $2$-representations are inspired by Kazhdan-Lusztig cell modules for Hecke algebras, as
defined in \cite{KahzdanLusztig79}. In the case of $2$-categories, a left (right, two-sided)
cell is a subset of indecomposable $1$-morphisms which generate the same 
left (resp. right, two-sided) ideal. Given a left cell,  the corresponding
cell $2$-representation is a suitable subquotient of the
principal $2$-representation associated with this left cell. 
In \cite{MazorchukMiemietz1} it was shown that cell $2$-representations
have a lot of properties similar to properties of usual simple representations.

The paper \cite{MazorchukMiemietz5} introduces the notion of {\em simple transitive $2$-representations}. 
By a \emph{transitive} $2$-representation one means an additive 
$2$-representation for which the action of $1$-morphisms is transitive in the sense that, 
starting from any  indecomposable object and applying all $1$-morphisms, one
obtains the whole underlying category of the $2$-representation by taking the additive closure.
A transitive $2$-representation is called \emph{simple transitive} if, in addition to the above defined notion of 
transitivity, we have that the maximal ideal of the $2$-representation 
which is invariant under the $2$-action is zero. In other words, the notion of 
simple transitivity has  two layers, were the first layer addresses the level of $1$-morphisms
and the second  layer addresses  the level of $2$-morphisms.

By construction, all cell $2$-representations are simple transitive but it is not obvious 
whether each simple transitive $2$-representation is equivalent to a cell
$2$-representation. As it turns out, in general, these two classes of $2$-representations
are different. Indeed, in \cite{MazorchukMiemietz5} one finds an example of a $2$-category for which 
there exist simple transitive $2$-representations which are not cell $2$-representations. 
One of the main results of \cite{MazorchukMiemietz5} is that, if the cell structure of the 
underlying $2$-category is ``nice enough'', then these two classes of $2$-representations coincide. 

One of the main examples for such a ``nice'' $2$-category is the $2$-category of Soergel 
bimodules over the coinvariant algebra of type $A$. Hence, for this $2$-category,
each simple transitive $2$-representation is equivalent to a cell $2$-representation.
Moreover, from \cite{MazorchukMiemietz1} it is also known that, for Soergel bimodules in type $A$,
two cell $2$-representations corresponding to left cells inside the same two-sided cell
are equivalent.  For all other types, the cell structure is more complicated and no
classification of simple transitive $2$-representations is known. 

In the present paper we study simple transitive $2$-representations of the 
the $2$-category of Soergel bimodules over the coinvariant algebra of type $B_2$.
This is the smallest case for which the cell structure does not satisfy 
the requirements of \cite[Theorem 18]{MazorchukMiemietz5}.
Our main result is the following (see Theorem~\ref{MainThm}):
\vspace{2mm}

{\bf Theorem~A.}
{\it
Each simple transitive $2$-representation of the $2$-category of Soergel bimodules over 
the coinvariant algebra of type $B_2$ is equivalent to a cell $2$-representation.
}
\vspace{2mm}

However, in contrast to type $A$, we will show that, in type $B_2$, the cell 
$2$-rep\-re\-sen\-ta\-ti\-ons corresponding to the two different left cells inside the
unique non-singleton two-sided cell are not equivalent. Behind this phenomenon
is the fact that the Kazhdan-Lusztig cell representations to which these different
cell $2$-representations decategorify are not isomorphic. The latter is due to the fact that 
these Kazhdan-Lusztig cell representations contain non-isomorphic one-dimensional subquotients. 
Additionally to the main result formulated above, we study the
$2$-category $\mathscr{S}_n$ of Soergel bimodules for an arbitrary finite dihedral group $D_n$ 
and classify for this $2$-category all simple transitive $2$-representations
of small ranks (namely, ranks one and two), generalizing \cite[Proposition~21]{MazorchukMiemietz5}.
Here, the {\em rank} of an additive  $2$-representation is the number of 
isomorphism classes of indecomposable additive generators of the underlying category of
the $2$-representation.

The proof of the main theorem, which will be given in Section~\ref{D4Case}, 
can be divided in two parts. In the first part we study the decategorification of 
a given simple transitive $2$-representation of $\mathscr{S}_4$ and show that 
all simple transitive $2$-representations have either rank one or rank three.
In this part we benefit from the fact that the representation theory of the dihedral group 
is well-known and quite easily described. Another crucial result that we use is the classical 
Perron-Frobenius Theorem describing the structure of real matrices with positive
or non-negative entries. 

What is left to show then is that every simple transitive $2$-representation of 
$\mathscr{S}_4$ of rank one or three is equivalent to a cell $2$-representation. 
The rank one case is a bit easier and is treated in Section~\ref{trivialCellModules}. 
The rank three case, on the other hand, is more involved and constitutes 
the second part of the proof of the main theorem. It is proved by giving an 
explicit construction of an equivalence. 

The article is organized as follows. In the next section we collect all preliminaries 
about $2$-categories, define fiat and finitary $2$-categories and their decategorifications. 
Section~\ref{scombin} describes the combinatorics of $2$-categories; more precisely, the notion 
of cells of $2$-categories is defined and cell $2$-representations and simple transitive 
$2$-representations are introduced. In Section~\ref{sSoergel}, we define Soergel bimodules 
and describe the cell structure of the $2$-category of Soergel bimodules for the dihedral
group $D_n$. In particular, these introductory sections summarize definitions and 
necessary results from  \cite{MazorchukMiemietz1,MazorchukMiemietz2,MazorchukMiemietz3,
MazorchukMiemietz4,MazorchukMiemietz5,MazorchukMiemietz6}.
Section \ref{ssimple} collects some preliminary results on 
simple transitive $2$-representations of $\mathscr{S}_n$. In Section~\ref{sd4} we classify all 
simple transitive $2$-representations of $\mathscr{S}_4$. Finally, in Section 
\ref{Rank1and2Case} we examine the situation for $n \geq 5$ and prove that every simple transitive 
$2$-representation of rank one is equivalent to a cell $2$-representation, and that there are no 
simple transitive $2$-representations of rank two. 

\section{Preliminaries}
\label{sprelim}

\subsection{Notation}

We fix an algebraically closed field  $\Bbbk$. If not stated otherwise, all tensor 
products are over $\Bbbk$ and all categories are assumed to be $\Bbbk$-linear.

\subsection{$2$-categories}

A \emph{$2$-category} is a category enriched over the category of small categories. 
A $2$-category $\mathscr{C}$ consists of objects, denoted $\mathtt{i, j}, \dots$; 
$1$-morphisms, denoted $F, G, \dots$; and $2$-morphisms, denoted $\alpha, \beta, \dots$. 
Each $\mathscr{C}(\mathtt{i},\mathtt{j})$ is a small category and composition is 
bifunctorial. For every $\mathtt{i} \in \mathscr{C}$, the corresponding identity 
$1$-morphism is $\mathbbm{1}_{\mathtt{i}}$. For every $1$-morphism $F$, 
the corresponding identity $2$-mor\-phism is $\text{id}_F$. Horizontal composition of 
$1$-morphisms is denoted by $\circ$. Horizontal and vertical compositions of $2$-morphisms 
are denoted by $\circ_0$ and $\circ_1$, respectively. 

\subsection{Finitary $2$-categories} 

An additive, $\Bbbk$-linear, idempotent split category $\mathcal{C}$ is called \emph{finitary} 
if $\mathcal{C}$ has finitely many isomorphism classes of indecomposable objects and all 
morphism spaces are finite dimensional $\Bbbk$-vector spaces. We denote by $\mathfrak{A}_{\Bbbk}$ 
the $2$-category whose objects are finitary $\Bbbk$-linear categories, whose $1$-morphisms are 
additive $\Bbbk$-linear functors and whose $2$-morphisms are natural transformations. 
From now on we assume that $\mathscr{C}$ is a \emph{finitary} 
$2$-category, that is:
\begin{itemize}
\item $\mathscr{C}$ has a finite number of objects;
\item $\mathscr{C}(\mathtt{i},\mathtt{j})\in \mathfrak{A}_{\Bbbk}$,
for all $\mathtt{i,j}$, and horizontal composition is additive
 and $\Bbbk$-linear;
\item for any object $\mathtt{i}$ in $\mathscr{C}$, the $1$-morphism $\mathbbm{1}_{\mathtt{i}}$ 
 is indecomposable.
\end{itemize} 
For examples and more details, see \cite{MazorchukMiemietz1,MazorchukMiemietz2,MazorchukMiemietz6,GM1,GM2,Xantcha,Zhang,Zhang2}.

\subsection{$2$-representations}

A \emph{$2$-representation} of $\mathscr{C}$ is a strict $2$-functor from $\mathscr{C}$ 
to \textbf{Cat}, the $2$-category of small categories.  The $2$-category $\mathscr{C}$-afmod of 
\emph{finitary} $2$-rep\-re\-sen\-tations has strict $2$-functors from $\mathscr{C}$ 
to $\mathfrak{A}_{\Bbbk}$ as objects; $2$-natural transformations as $1$-morphisms, and 
modifications as $2$-morphisms, see \cite[Section 2.3]{MazorchukMiemietz3}. We denote 
$2$-representations $\textbf{M, N, }\dots$. We say that $\mathbf{M}$ and $\mathbf{N}$ 
are \emph{equivalent} if there is a $2$-natural transformation 
$\Phi: \textbf{M} \to \textbf{N}$ such that each $\Phi_{\mathtt{i}}$ is an 
equivalence.

Let $\mathbf{M}$ be a $2$-representation of $\mathscr{C}$ such that $\mathbf{M}(\mathtt{i})$ 
is idempotent split and additive, for each $\mathtt{i} \in \mathscr{C}$. Let $\{X_i:i\in I\}$
be a collection of objects in (various) $\mathbf{M}(\mathtt{j})$. The additive closure 
$\text{add}(\{\mathbf{M}(F)X_i\})$ of all objects of the form 
$\mathbf{M}(F)X_i$, where $F$ runs through all $1$-morphisms in $\mathscr{C}$ and $i\in I$, 
is $\mathscr{C}$-stable and gives, by restriction, a $2$-representation $\mathscr{C}$
denoted by $\textbf{G}_{\textbf{M}}(\setof{X_i}{i \in I})$. 

For simplicity, we will often write $F\:X$ instead of $\mathbf{M}(F)\:X$, etc.  
For $\mathtt{i} \in \mathscr{C}$,  let $\mathbf{P}_{\mathtt{i}}:=\mathscr{C}(\mathtt{i},\_)
\in \mathscr{C}$-afmod be the $\mathtt{i}$\emph{-th principal additive} 
$2$-representation.

\subsection{Weakly fiat and fiat $2$-categories} 

For a $2$-category $\mathscr{C}$, we denote by $\mathscr{C}^{\text{op}}$ the opposite $2$-category
which we obtain by reversing both $1$- and $2$-morphisms. A finitary $2$-category $\mathscr{C}$ 
is called \emph{weakly fiat} if 
\begin{itemize}
\item there is a weak equivalence $*: \mathscr{C} \to \mathscr{C}^{\text{op}}$;
\item for all $\mathtt{i,j} \in \mathscr{C}$ and $F \in \mathscr{C}(\mathtt{i},\mathtt{j})$,
there are $\alpha: F \circ F^* \to \mathbbm{1}_{\mathtt{j}}$  and 
$\beta: \mathbbm{1}_{\mathtt{i}} \to F^* \circ F$ such that 
$(\alpha \circ_0 \text{id}_F) \circ_1 (\text{id}_F \circ_0 \beta) = \text{id}_F$ and 
$(\text{id}_{F^*}\circ_0 \alpha) \circ_1 (\beta \circ_0 \text{id}_{F^*}) = \text{id}_{F^*}$.
\end{itemize}
A weakly fiat $2$-category is \emph{fiat} if $*$ is involutive, see \cite[Section 2.4]{MazorchukMiemietz1} 
and \cite[Section 2.2]{MazorchukMiemietz2}.

\begin{example}\label{CA}
Let $A$ be a basic, self-injective, weakly symmetric, connected and not simple $\Bbbk$-algebra of 
dimension $m<\infty$ and $\mathcal{A}$ a small category equivalent to $A$-mod.
Following  \cite[Section~5]{MazorchukMiemietz5}, we define
the fiat $2$-category $\mathscr{C}_A$ as follows: 
\begin{itemize}
\item $\mathscr{C}_A$ has one object $\c$ (which we identify with $\mathcal{A}$);
\item $1$-morphisms are direct sums of functors isomorphic to the identity functor 
on $\mathcal{A}$ or to a functor given by tensoring with a projective $A$-$A$-bimodule;
\item $2$-morphisms are natural transformations.
\end{itemize} 
\end{example}

For more examples, see \cite{MazorchukMiemietz1,MazorchukMiemietz6,Xantcha}.

\subsection{Abelianization}

For a finitary category $\mathcal{A}$, we denote by $\overline{\mathcal{A}}$ its
\emph{abelianization} as in \cite[Subsection~2.7]{MazorchukMiemietz5}.
This induces the \emph{abelianization $2$-functor} $\mathbf{M}\mapsto \overline{\mathbf{M}}$,
see \cite[Subsection~4.2]{MazorchukMiemietz2}, and gives  the 
$\mathtt{i}$\emph{-th principal abelian} $2$-representation $\overline{\mathbf{P}}_{\mathtt{i}}$. 

\subsection{Decategorification}

The \emph{Grothendieck group} of a skeletally small abelian category $\mathcal{A}$ is denoted
$[\mathcal{A}]$. Similarly, the \emph{split Grothendieck group} of a skeletally small additive 
category is denoted $[\mathcal{A}]_{\oplus}$. We set 
$[\mathcal{A}]^{\C} := \C \otimes_{\Z} [\mathcal{A}]$ and similarly for 
$[\mathcal{A}]_{\oplus}$. If $\mathcal{A}$ is additive but not abelian, we often simplify
$[\mathcal{A}]_{\oplus}$ to $[\mathcal{A}]$.

The \emph{decategorification} $[\mathscr{C}]$ of $\mathscr{C}$ is a ($1$-)category with 
the same objects as $\mathscr{C}$ and $[\mathscr{C}](\mathtt{i,j}):=[\mathscr{C}(\mathtt{i,j})]$,
for all $\mathtt{i,j}$. Composition in $[\mathscr{C}]$ is induced by that in $\mathscr{C}$.
Given a $2$-representation $\mathbf{M}$ of $\mathscr{C}$, the \emph{decategorification} 
$[\mathbf{M}]$ of $\mathbf{M}$ is the functor from $[\mathscr{C}]\to \mathbf{Ab}$
(the category of abelian groups) defined as follows: 
\begin{itemize}
\item for $\mathtt{i} \in [\mathscr{C}]$, $[\mathbf{M}](\mathtt{i})=[\mathbf{M}(\mathtt{i})]$;
\item for $F \in \mathscr{C}(\mathtt{i,j})$, the action $[F]\curvearrowright[\mathbf{M}](\mathtt{i})$ 
is induced from $F\curvearrowright\mathbf{M}(\mathtt{i})$.
\end{itemize}

\begin{example}\label{decatPrincip}
Let $\mathscr{C}$ be a finitary $2$-category with one object denoted by $\c$. Then 
$[\mathbf{P}_{\c}]$ can be identified with the regular representation of the ring 
$[\mathscr{C}(\c, \c)]$. 
\end{example}

\subsection{Matrices in the Grothendieck group}\label{MatInGroGr}

Let $\mathbf{M}$ be a finitary $2$-representation of a weakly fiat $2$-category $\mathscr{C}$ with only 
one object $\c$. Then, for a $1$-morphism $F$, we denote by $\Lparen F\Rparen$ the square matrix 
with non-negative integer coefficients, whose rows and columns are indexed by isomorphism classes 
of indecomposable objects in $\mathbf{M}(\clubsuit)$, and where the intersection of the row indexed by 
$Y$ where the column indexed by $X$ contains the multiplicity of $Y$ as a direct summand of $F\:X$.  

Similarly, for the abelianization $\overline{\mathbf{M}}$ of $\mathbf{M}$, we  denote by 
$\llbracket F \rrbracket$ the matrix  with rows and columns indexed by isomorphism classes of 
simple objects in $\overline{\mathbf{M}}(\clubsuit)$ and where the intersection of the row 
indexed by $Y$ and the column indexed by $X$ contains the composition multiplicity of $Y$ in $F\:X$.
We have $\llbracket F^* \rrbracket = \Lparen F\Rparen^t$ by  \cite[Lemma~10]{MazorchukMiemietz5}.

\section{Combinatorics of $2$-categories and cell $2$-representations}\label{scombin}

\subsection{Multisemigroup of a $2$-category}

We denote by $\mathcal{S}(\mathscr{C})$ the
{\em multisemigroup} of $\mathscr{C}$, as defined in  \cite[Section~3]{MazorchukMiemietz2};
see \cite{Viro,KudryavtsevaMazorchuk15} for more details on multisemigroups.
$\mathcal{S}(\mathscr{C})$ consists of isomorphism classes of indecomposable $1$-morphisms 
in $\mathscr{C}$ together with a formal zero element $0$. The multivalued operation 
$\star$ on $\mathcal{S}(\mathscr{C})$ is given, for indecomposable $1$-morphisms $F$ and $G$,  by:
{\small
\[
 [F]\star[G] = \begin{cases}
            0, & F \circ G \text{ is undefined}; \\
	    0, & F \circ G = 0;\\ 
            \setof{[H] \in \mathcal{S}(\mathscr{C})}{H \text{ is a summand of }F \circ G}, & \text{ otherwise}.
           \end{cases}
\]
}

\subsection{Cells in $2$-categories} 

For indecomposable $1$-morphisms $F$ and $G$, we write $F \leq_L G$ if there is a 
$1$-morphism $H$ in $\mathscr{C}$ such that $G$ is isomorphic to a direct summand of $H\circ F$.
Define $\leq_R$ and $\leq_J$ similarly using composition from the right and from both sides, respectively.
The relations $\leq_L, \leq_R, \leq_J$ are partial pre-orders on $\mathcal{S}(\mathscr{C})$, called
the {\em left}, {\em right} and {\em two-sided} pre-orders. Corresponding equivalence classes are
called {\em cells}. For a $1$-morphism $F$, we denote by $\mathcal{L}_F$,
$\mathcal{R}_F$ and $\mathcal{J}_F$ the left, right and two-sided cells containing $F$, respectively.

Following \cite[Section 4.8]{MazorchukMiemietz1}, a two-sided cell $\mathcal{J}$ in $\mathscr{C}$ is 
called \emph{strongly regular} if different left (resp. right) cells in $\mathcal{J}$ 
are not comparable with respect to $\leq_L$ (resp. $\leq_R$) and, moreover, the intersection of a 
left and a right cell in $\mathcal{J}$ is a singleton. For example, both two-sided cells 
of $\mathscr{C}_A$ in Example~\ref{CA} are strongly regular, see \cite[Section~5.1]{MazorchukMiemietz5}.

\subsection{$\mathcal{J}$-simple $2$-categories}

Given a two-sided cell $\mathcal{J}$ in $\mathcal{S}(\mathscr{C})$, we say that $\mathscr{C}$ 
is \emph{$\mathcal{J}$-simple} provided that any non-trivial $2$-ideal of $\mathscr{C}$ contains 
$\text{id}_F$ for some (and hence for all)  $F \in \mathcal{J}$. For any $2$-ideal $\mathscr{I}$
of $\mathscr{C}$, the quotient map $\mathscr{C}\to \mathscr{C}/\mathscr{I}$ induces a
partially defined bijection from the set of two-sided cells in $\mathscr{C}$ to the set of 
two-sided cells in $\mathscr{C}/\mathscr{I}$. The domain of this partial bijection consists of 
all two-sided cells whose $1$-morphisms are not sent to zero. The following
is proved in \cite[Theorem 15]{MazorchukMiemietz2}.

\begin{theorem}\label{J-simple}
Let $\mathscr{C}$ be a fiat category and $\mathcal{J}$ a non-zero two-sided cell in $\mathscr{C}$. 
Then there exists a unique $2$-ideal $\mathscr{I}$ in $\mathscr{C}$ such that $\mathscr{C}/\mathscr{I}$ 
is $\mathcal{J}$-simple.
\end{theorem}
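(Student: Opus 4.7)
The plan is to identify $\mathscr{I}$ as the unique maximum of the family $\mathfrak{F}$ consisting of all $2$-ideals $\mathscr{K}$ of $\mathscr{C}$ with the property that $\mathrm{id}_F\notin\mathscr{K}$ for every $F\in\mathcal{J}$. As a preliminary observation, the parenthetical ``some (and hence for all)'' in the definition of $\mathcal{J}$-simplicity should be unpacked: if $F,G\in\mathcal{J}$, then $G$ is a direct summand of some $H_1\circ F\circ H_2$ via a split inclusion $\iota$ and a split projection $\pi$ with $\pi\circ_1\iota=\mathrm{id}_G$, so the identity $\mathrm{id}_G=\pi\circ_1(\mathrm{id}_{H_1}\circ_0\mathrm{id}_F\circ_0\mathrm{id}_{H_2})\circ_1\iota$ shows that $\mathrm{id}_G$ lies in any $2$-ideal containing $\mathrm{id}_F$. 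Thus membership in $\mathfrak{F}$ only needs to be tested against a single chosen $F\in\mathcal{J}$.

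The real content of the theorem is to show that $\mathfrak{F}$ is closed under finite sums, and this is where fiatness feeds in through Krull--Schmidt. Fix one $F\in\mathcal{J}$; since $\mathscr{C}$ is finitary, $F$ is indecomposable with local, finite-dimensional endomorphism algebra $\mathrm{End}_{\mathscr{C}}(F)$, whose unique maximal two-sided ideal is the Jacobson radical and coincides with the set of non-invertible endomorphisms. For any $\mathscr{K}\in\mathfrak{F}$, the intersection $\mathscr{K}\cap\mathrm{End}_{\mathscr{C}}(F)$ is a two-sided ideal missing the unit $\mathrm{id}_F$ and hence sits inside that radical. The radical is closed under addition, so $(\mathscr{K}_1+\mathscr{K}_2)\cap\mathrm{End}_{\mathscr{C}}(F)$ again avoids $\mathrm{id}_F$, yielding $\mathscr{K}_1+\mathscr{K}_2\in\mathfrak{F}$. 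I expect this local-ring step to be the one genuine obstacle; without the Krull--Schmidt structure, the sum of two ideals that each avoid $\mathrm{id}_F$ could in principle produce $\mathrm{id}_F$, and the argument collapses.

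With closure under finite sums in hand, I set $\mathscr{I}:=\sum_{\mathscr{K}\in\mathfrak{F}}\mathscr{K}$; if $\mathrm{id}_F$ lay in $\mathscr{I}$ it would already lie in some finite subsum, contradicting the previous paragraph, so $\mathscr{I}\in\mathfrak{F}$ is its (necessarily unique) maximum. Any non-trivial $2$-ideal of $\mathscr{C}/\mathscr{I}$ lifts to a $2$-ideal of $\mathscr{C}$ strictly containing $\mathscr{I}$, hence outside $\mathfrak{F}$, so it contains $\mathrm{id}_F$; this is precisely $\mathcal{J}$-simplicity. Conversely, a $\mathcal{J}$-simple quotient $\mathscr{C}/\mathscr{I}'$ forces $\mathscr{I}'\in\mathfrak{F}$ (the $1$-morphisms of $\mathcal{J}$ must survive in the quotient for the definition to be meaningful), and any $\mathscr{K}\in\mathfrak{F}$ with $\mathscr{I}'\subsetneq\mathscr{K}$ would give a non-trivial $2$-ideal $\mathscr{K}/\mathscr{I}'$ of the quotient avoiding $\mathrm{id}_F$, contradicting $\mathcal{J}$-simplicity. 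So $\mathscr{I}'$ is also the maximum of $\mathfrak{F}$, and $\mathscr{I}'=\mathscr{I}$.
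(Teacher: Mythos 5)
The paper does not prove this theorem; it records it as \cite[Theorem 15]{MazorchukMiemietz2}, so there is no in-paper argument to compare against. Your proposal is correct and self-contained. Exhibiting $\mathscr{I}$ as the greatest element of the family $\mathfrak{F}$ of $2$-ideals avoiding $\mathrm{id}_F$ (for any, hence every, $F\in\mathcal{J}$), and obtaining closure of $\mathfrak{F}$ under sums from the locality of $\mathrm{End}_{\mathscr{C}}(F)$, is exactly the right strategy. The reduction to a single test $F$, the observation that the componentwise sum of two $2$-ideals intersected with $\mathrm{End}_{\mathscr{C}}(F)$ stays inside the Jacobson radical, the passage from finite to arbitrary sums via the ``finite support'' remark, and the uniqueness argument via maximality are all handled correctly.

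One imprecision in the narration: you attribute the local-endomorphism-ring input to fiatness (``this is where fiatness feeds in through Krull--Schmidt''), but this is a consequence of finitariness alone --- each $\mathscr{C}(\mathtt{i},\mathtt{j})$ lies in $\mathfrak{A}_{\Bbbk}$, so every indecomposable $1$-morphism already has a local, finite-dimensional endomorphism algebra. Nothing in your argument uses the weak involution and adjunction data that elevate a finitary $2$-category to a fiat one. This is not a gap; it merely means your proof establishes the statement under a formally weaker hypothesis than stated, and the credit you give to fiatness should really go to finitariness.
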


\subsection{Cell $2$-representations}

Let $\mathcal{L}\neq \{0\}$ be a left cell in $\mathcal{S}(\mathscr{C})$ and 
$\mathtt{i}_{\mathcal{L}} \in \mathscr{C}$ be the unique object such that all 
$F \in \mathcal{L}$ have $\mathtt{i}_{\mathcal{L}}$ as the domain. Then the $2$-rep\-re\-sen\-ta\-tion
$\mathbf{N}:= \mathbf{G}_{\mathbf{P}_{\mathtt{i}_{\mathcal{L}}}}(\{\mathrm{F}:\mathrm{F}\geq_L \mathcal{L}\})$
has a unique maximal $\mathscr{C}$-stable ideal $\mathscr{J}$. The quotient
$\mathbf{G}_{\mathbf{P}_{\mathtt{i}_{\mathcal{L}}}}(\{\mathrm{F}:\mathrm{F}\geq_L \mathcal{L}\})/\mathscr{J}$
is called the {\em cell $2$-representation} corresponding to $\mathcal{L}$ and denoted
$\mathbf{C}_{\mathcal{L}}$, see \cite[Subsection~6.5]{MazorchukMiemietz2}

If $\mathscr{C}$ is weakly fiat, there is an alternative construction of $\mathbf{C}_{\mathcal{L}}$
using  $\overline{\mathbf{P}}_{\mathtt{i}_{\mathcal{L}}}$, see 
\cite[Subsection~4.5]{MazorchukMiemietz1} and  also \cite{Zhang} for some related generalizations.

\subsection{Transitive and simple transitive $2$-representations}
A finitary $2$-rep\-re\-sen\-ta\-ti\-on $\mathbf{M}$ of $\mathscr{C}$ is called \emph{transitive} 
if, for every $\mathtt{i} \in \mathscr{C}$ and every non-zero object $X \in \mathbf{M}(\mathtt{i})$, 
we have  $\mathbf{G}_{\mathbf{M}}(\{X\}) = \mathbf{M}$. All cell $2$-representations are transitive,
see \cite[Subsection~3.3]{MazorchukMiemietz5}. The following statement is proved in
\cite[Lemma~4]{MazorchukMiemietz5}:

\begin{lemma}\label{SimpTransIdeal}
Let $\mathbf{M}$ be a transitive $2$-representation of $\mathscr{C}$. 
Then the set of all $\mathscr{C}$-stable ideals of $\mathbf{M}$ which are different
from $\mathbf{M}$ contains a unique maximal element (with respect to inclusion),
denoted $\mathbf{I}$.
\end{lemma}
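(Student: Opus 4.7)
The plan is to take $\mathbf{I}$ to be the sum of all proper $\mathscr{C}$-stable ideals of $\mathbf{M}$, and show that this sum is still proper. Uniqueness and maximality are then immediate from the construction.

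First, I would give a convenient characterisation of properness. I claim that a $\mathscr{C}$-stable ideal $\mathscr{I}$ of $\mathbf{M}$ equals $\mathbf{M}$ if and only if $\mathrm{id}_X \in \mathscr{I}$ for some non-zero indecomposable object $X$ of some $\mathbf{M}(\mathtt{i})$. The ``only if'' direction is trivial. For ``if'', use transitivity: every non-zero indecomposable $Y$ occurs as a direct summand of $\mathbf{M}(F)X$ for some $1$-morphism $F$; $\mathscr{C}$-stability then gives $\mathrm{id}_{\mathbf{M}(F)X}=\mathbf{M}(F)(\mathrm{id}_X)\in\mathscr{I}$, and projecting onto the summand $Y$ shows $\mathrm{id}_Y\in\mathscr{I}$. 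So $\mathscr{I}$ contains every identity and is all of $\mathbf{M}$.

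Next, since each $\mathbf{M}(\mathtt{i})\in\mathfrak{A}_{\Bbbk}$ is finitary and hence Krull--Schmidt, the endomorphism algebra $\mathrm{End}(X)$ of each non-zero indecomposable $X$ is local. I would then observe that for any proper $\mathscr{C}$-stable ideal $\mathscr{I}$, the subgroup $\mathscr{I}(X,X)$ lies in the Jacobson radical of $\mathrm{End}(X)$: if it contained a unit $\beta$, then $\mathrm{id}_X=\beta^{-1}\beta\in\mathscr{I}$, forcing $\mathscr{I}=\mathbf{M}$ by the previous step, a contradiction.

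Finally, set $\mathbf{I}:=\sum_{\mathscr{I}}\mathscr{I}$, where the sum runs over all proper $\mathscr{C}$-stable ideals (at least the zero ideal is such). This is clearly a $\mathscr{C}$-stable ideal containing every proper one. For each non-zero indecomposable $X$,
\[
\mathbf{I}(X,X)\;=\;\sum_{\mathscr{I}}\mathscr{I}(X,X)\;\subseteq\;\mathrm{rad}\bigl(\mathrm{End}(X)\bigr),
\]
because the radical of a local ring is closed under sums, so $\mathrm{id}_X\notin\mathbf{I}$. By the characterisation above, $\mathbf{I}\neq\mathbf{M}$, so $\mathbf{I}$ is the sought unique maximal proper $\mathscr{C}$-stable ideal. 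The main (and really only) obstacle is precisely this last step: to see that summing all proper ideals does not accidentally produce an identity; this is exactly where the locality of the endomorphism rings of indecomposables is indispensable, as it rules out writing $\mathrm{id}_X$ as a finite sum of non-units.
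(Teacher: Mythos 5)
Your proof is correct and uses essentially the same approach as the cited reference (\cite[Lemma~4]{MazorchukMiemietz5}): reduce properness of a $\mathscr{C}$-stable ideal to not containing $\mathrm{id}_X$ for any non-zero indecomposable $X$ via transitivity, use locality of $\mathrm{End}(X)$ in the Krull--Schmidt category $\mathbf{M}(\mathtt{i})$ to see that proper ideals meet each $\mathrm{End}(X)$ inside its radical, and then observe that the sum of all proper $\mathscr{C}$-stable ideals is therefore still proper because radicals are closed under addition. All three steps are cleanly and correctly justified.
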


A transitive $2$-representation $\mathbf{M}$ is called \emph{simple transitive} if $\mathbf{I}=0$. 
The quotient $\mathbf{M}/\mathbf{I}$ is simple transitive and is 
called the \emph{simple transitive quotient of $\mathbf{M}$}.  

Cell $2$-representations are, by construction, simple transitive. However, in 
\cite[Example~3.2]{MazorchukMiemietz5} one can find examples of simple transitive
$2$-representations which are not equivalent to cell $2$-representations.

\subsection{Simple transitive subquotients of finitary $2$-representations}

For a finitary $\mathbf{M}$, denote by  $\text{Ind}(\mathbf{M})$ the set of isomorphism classes 
of indecomposable objects in $\amalg_{\mathtt{i} \in \mathscr{C}}\mathbf{M}(\mathtt{i})$. 
We say that $\mathbf{X} \subset \text{Ind}(\mathbf{M})$ is  {\em $\mathscr{C}$-stable} 
if any indecomposable direct summand of $F\;X$ is isomorphic to an object in $\mathbf{X}$, 
for any $X \in \mathbf{X}$ and any $1$-morphism $F$. For a $\mathscr{C}$-stable $\mathbf{X}$, set
$\mathbf{M_X}:=\mathbf{G}_{\mathbf{M}}(\mathbf{X})$.

Let $\mathbf{X} \subsetneq \mathbf{Y} \subset \text{Ind}(\mathbf{M})$ be $\mathscr{C}$-stable
and $\widehat{\mathbf{M_X}}$ be the $\mathscr{C}$-invariant ideal of $\mathbf{M_Y}$ generated by ${\mathbf{M_X}}$.
If $\mathbf{M_Y}/\widehat{\mathbf{M_X}}$ is transitive, then its simple transitive 
quotient is a \emph{simple transitive subquotient} of $\mathbf{M}$. By 
\cite[Theorem 8]{MazorchukMiemietz5}, the multiset of equivalence classes of
simple transitive subquotients of $\mathbf{M}$ is an  invariant of $\mathbf{M}$.

\subsection{Isotypic $2$-representations}

For a  be a finitary $2$-representation $\mathbf{M}$ of $\mathscr{C}$ and a finitary 
$\Bbbk$-linear category $\mathcal{A}$, we denote by $\mathbf{M}^{\boxtimes \mathcal{A}}$
the corresponding {\em inflation} as defined in \cite[Section 3.6]{MazorchukMiemietz6}.  
We call $\mathbf{M}$ \emph{isotypic} if all simple  transitive subquotients of $\mathbf{M}$ 
are equivalent. The following is proved in \cite[Theorem~4]{MazorchukMiemietz6}.

\begin{theorem}\label{IsoFaithClassification}
Let $\mathscr{C}$ be weakly fiat with a unique maximal two-sided cell 
$\j$, and $\l$ a left cell in $\j$.  Assume that $\j$ is strongly regular and that 
$\mathscr{C}$ is $\j$-simple. Then any isotypic faithful $2$-representation of 
$\mathscr{C}$ is equivalent to an inflation of $\mathbf{C}_{\l}$.
\end{theorem}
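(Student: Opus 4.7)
The plan is to identify the simple transitive subquotients of $\mathbf{M}$ as copies of $\mathbf{C}_\l$, extract a finitary ``multiplicity'' category $\mathcal{A}$ from $\mathbf{M}$ itself, and then build an explicit equivalence $\Phi : \mathbf{C}_\l^{\boxtimes \mathcal{A}} \to \mathbf{M}$.

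First, I would show that every simple transitive subquotient of $\mathbf{M}$ is equivalent to $\mathbf{C}_\l$. Since $\mathscr{C}$ is $\j$-simple and $\j$ is the unique maximal two-sided cell, any non-trivial $2$-ideal contains $\mathrm{id}_F$ for some $F \in \j$. A non-faithful subquotient would therefore have every $1$-morphism in $\j$ act as zero, which together with transitivity and the faithfulness of $\mathbf{M}$ would produce a contradiction. Hence every subquotient is both faithful and transitive, so by the strong regularity of $\j$ and the classification in \cite{MazorchukMiemietz5} each is equivalent to some $\mathbf{C}_{\l'}$ with $\l' \subset \j$. In the strongly regular case, cell $2$-representations attached to different left cells inside $\j$ are all equivalent by \cite{MazorchukMiemietz1}, and the isotypic hypothesis then singles out one class, which we call $\mathbf{C}_\l$.

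Second, I would construct $\mathcal{A}$ internally inside $\mathbf{M}(\mathtt{i}_\l)$. Fix an indecomposable $L \in \mathbf{C}_\l(\mathtt{i}_\l)$ and lift it to an indecomposable $X \in \mathbf{M}(\mathtt{i}_\l)$ whose class in the top simple transitive subquotient equals $L$. Let $B$ be the quotient of the endomorphism algebra $\mathrm{End}_{\mathbf{M}(\mathtt{i}_\l)}(X)$ by the two-sided ideal of endomorphisms that factor through objects annihilated by all $1$-morphisms in $\j$. The isotypic hypothesis guarantees that $B$ is a finite-dimensional basic $\Bbbk$-algebra whose indecomposable projectives are in bijection with the copies of $\mathbf{C}_\l$ appearing in a simple transitive filtration of $\mathbf{M}$, and I would set $\mathcal{A} := B\text{-proj}$.

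Finally, I would define $\Phi$ on objects of the form $F\,L \boxtimes A$ by applying $F$ to the direct summand $X_A$ of a sum of copies of $X$ picked out by $A$ under the equivalence $\mathcal{A}\cong B\text{-proj}$, and extend to $2$-morphisms via the tensor structure of $\boxtimes$ together with the $\mathscr{C}$-action. The hard part will be verifying that $\Phi$ is fully faithful and essentially surjective. Strong regularity forces the Hom spaces in $\mathbf{C}_\l$ between indecomposables indexed by elements of $\l$ to be essentially one-dimensional, so the required matching of Hom spaces reduces to showing that every $2$-morphism in $\mathbf{M}$ between $F\,X_A$ and $G\,X_{A'}$ splits as a tensor of a morphism in $\mathbf{C}_\l$ with one in $\mathcal{A}$, modulo contributions which vanish by $\j$-simplicity. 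Essential surjectivity would then follow by comparing the matrices $\Lparen F \Rparen$ on both sides: on $\mathbf{C}_\l^{\boxtimes \mathcal{A}}$ these are determined by strong regularity and the rank of $\mathcal{A}$, and on $\mathbf{M}$ they must agree by the isotypic structure of the subquotient filtration.
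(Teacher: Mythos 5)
The paper itself does not prove this statement; it is imported verbatim from \cite[Theorem~4]{MazorchukMiemietz6}, so there is no internal proof to compare against. Your outline has the right overall shape (identify the simple transitive subquotients with $\mathbf{C}_{\l}$, extract a multiplicity category $\mathcal{A}$, build an equivalence with the inflation), but there are two real gaps. The first is in your claim that each simple transitive subquotient is faithful: you argue that a non-faithful subquotient would kill $\j$ and that this contradicts faithfulness of $\mathbf{M}$, but faithfulness of $\mathbf{M}$ does not by itself preclude all subquotients killing $\j$, since $\mathbf{M}(F)$ for $F\in\j$ could \emph{a priori} be block strictly upper-triangular (nilpotent with respect to the weak Jordan--H\"older filtration) with every diagonal block zero. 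To rule this out one needs the weakly fiat structure: if $X$ is indecomposable and $\mathbf{M}(F)\,X\neq 0$, then $X$ is a direct summand of $\mathbf{M}(F^*\circ F)\,X$, which forces the corresponding diagonal block of $\Lparen F^*\circ F\Rparen$ to be nonzero and hence some element of $\j$ to act nonzero on some subquotient. This is where the (weak) fiat hypothesis does real work, and your sketch omits it.

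The second, and larger, gap is that the entire content of the theorem lies in the step you explicitly flag as ``the hard part'': verifying that your candidate $\Phi:\mathbf{C}_{\l}^{\boxtimes\mathcal{A}}\to\mathbf{M}$ is fully faithful and essentially surjective. What you offer there is a heuristic, not an argument. The claim that every $2$-morphism in $\mathbf{M}$ between $F\,X_A$ and $G\,X_{A'}$ ``splits as a tensor'' is precisely what needs proving, and you give no mechanism for it. Moreover, constructing $\mathcal{A}$ from $\mathrm{End}_{\mathbf{M}(\mathtt{i}_{\l})}(X)$ for a \emph{single} lifted indecomposable $X$ does not obviously recover the full multiplicity category: distinct indecomposable objects of $\mathbf{M}(\mathtt{i}_{\l})$ lying over $L$ in different layers of the filtration need not be reachable from one chosen lift $X$, so the quotient algebra $B$ will in general be too small to index all copies of $\mathbf{C}_{\l}$. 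As written the proposal is a plausible plan of attack, not a proof.
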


\section{Soergel Bimodules}\label{sSoergel}

\subsection{Soergel bimodules over the coinvariant algebra of a finite Coxeter group}
Here we describe the $2$-category of Soergel bimodules, following \cite[Example 3]{MazorchukMiemietz2}. 
For more details, see \cite{Soergel07} and \cite{Elias}. From now on we will fix 
$\mathbb{C}$ as our ground field.

Let $(W,S)$ be a finite Coxeter group with a fixed geometric representation $\mathfrak{h}$
and let $C_W$ be the corresponding coinvariant algebra. For a simple reflection $s$,
we denote by $C_W^s$ the subalgebra of $s$-invariants in $C_W$. For $w \in W$, 
fix a reduced expression $w = s_1\cdot s_2 \cdot \cdots \cdot s_k$ and define the $C_W$-$C_W$-bimodule
\[
 \hat{B}_w := C_W \otimes_{C_W^{s_1}} C_W \otimes_{C_W^{s_2}} \cdots \otimes_{C_W^{s_k}} C_W. 
\]
Set $\theta_e = C_W$. For $e \neq w \in W$, define $\theta_w$ as the unique indecomposable 
direct summand of $\hat{B}_w$ that is not isomorphic to $\theta_{w'}$ for any shorter $w'\in W$.  
It exists by \cite[Satz 6.14]{Soergel07}. The $C_W$-$C_W$-bimodule $\theta_w$ 
(which does not depend on the choice of a reduced expression above even though the $\hat{B}_w$ do) 
is the \emph{(indecomposable) Soergel bimodule} associated to $w$. 

Following \cite[Section~2.2]{MazorchukMiemietz2}, for a small category $\mathcal{A}$ 
equivalent to $C_W$-mod, let $\mathscr{S}_W$ be the fiat $2$-category defined as follows:
\begin{itemize}
\item the only object of $\mathscr{S}_W$ is $\clubsuit$, which is identified with $\mathcal{A}$,
\item $1$-morphisms in $\mathscr{S}_W$ are endofunctors of $\mathcal{A}$
given by tensoring with direct sums of  Soergel bimodules,
\item $2$-morphisms are natural transformations of functors.
\end{itemize}

\subsection{Soergel bimodules over the dihedral group $D_n$}

For $n \geq 3$, consider the $2$-category $\mathscr{S}_n$ of Soergel bimodules for
the dihedral group 
\[
  D_n = \langle s,t \; | \; s^2 = t^2 = (st)^n = (ts)^n = e \rangle=
  \{e, s, t, st, ts, sts, tst, \dots, w_0\},
\]
where $w_0$ is the (unique) longest element given by 
\begin{displaymath}
w_0 = (st)^{\frac{n}{2}} = (ts)^{\frac{n}{2}}, \text{ if $n$ is even},\qquad
w_0 = (st)^{\frac{n-1}{2}}s = (ts)^{\frac{n-1}{2}}t, \text{ if $n$ is odd.}
\end{displaymath}
As usual, we denote by $l(w)$ the length of a reduced expression of $w \in D_n$.

\subsection{Kazhdan-Lusztig basis}

Consider the group algebra $\mathbb{Z}[D_n]$. For $w\in W$, set 
\begin{displaymath}
\underline{w}:=w+\sum_{w':l(w')<l(w)}w'.
\end{displaymath}
Then $\{\underline{w}:w\in D_n\}$ coincides with the {\em Kazhdan-Lusztig basis} of $\mathbb{Z}[D_n]$,
defined in \cite{KahzdanLusztig79}, see \cite{Sauerwein15}.

\begin{example}
In the case of $D_4$, we get the following KL-basis: 
\begin{align*}
\underline{e}  & = e, \quad \underline{s} = e + s, \quad \underline{t} = e + t, \quad 
\underline{st} = e + s + t + st, \quad \underline{ts} = e + s + t + ts, \\
\underline{sts} & = e + s + t + st + ts + sts, \quad \underline{tst} = e + s + t + st + ts + tst, \\
\underline{stst} & = e + s + t + st + ts + sts + tst + stst.
\end{align*} 
\end{example}

The connection to Soergel bimodules is given by the following theorem, see \cite{Soergel92}:

\begin{theorem}[Soergel]\label{Soergel}
There is an algebra isomorphism as follows:
\begin{align*}
  \Phi: [\mathscr{S}_n(\clubsuit,\clubsuit)] &\to \Z[D_n], \\
  [\theta_w] & \mapsto \underline{w}.
\end{align*}
\end{theorem}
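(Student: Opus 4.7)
The plan is to establish the theorem in three stages: verify that $\Phi$ is a well-defined $\mathbb{Z}$-module isomorphism, reduce multiplicativity to a family of identities on generators, and check those identities by direct bimodule computations.

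First I would address the underlying $\mathbb{Z}$-module structure. Since $\mathscr{S}_n(\clubsuit,\clubsuit)$ is additive, $\Bbbk$-linear, and idempotent-split, Krull--Schmidt holds, so $[\mathscr{S}_n(\clubsuit,\clubsuit)]$ is the free abelian group on isomorphism classes of indecomposable $1$-morphisms. By \cite[Satz~6.14]{Soergel07} and the very definition of $\theta_w$, these classes are exactly $\{[\theta_w] : w \in D_n\}$. On the group-algebra side, the expression $\underline{w} = w + \sum_{l(w')<l(w)}w'$ is unitriangular with respect to any linear extension of the Bruhat order, so $\{\underline{w}\}_{w \in D_n}$ is a $\mathbb{Z}$-basis of $\mathbb{Z}[D_n]$. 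Consequently $\Phi$ extends uniquely to a $\mathbb{Z}$-linear isomorphism, and what remains is the ring structure.

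Next I would show that $\Phi$ is multiplicative. It suffices to verify $\Phi([\theta_w \otimes_{C_W}\theta_{s'}]) = \underline{w}\cdot\underline{s'}$ for every $w \in D_n$ and every simple reflection $s' \in \{s,t\}$, because every $\theta_y$ appears as a direct summand of a Bott--Samelson bimodule $\hat{B}_y$ built from iterated tensor products of $\theta_s$ and $\theta_t$. I would proceed by induction on $l(w)$. The base case $l(w)=0$ is immediate, and the inductive step rests on two pivotal calculations: firstly, the identity $\theta_{s'}\otimes_{C_W}\theta_{s'}\cong \theta_{s'}\oplus\theta_{s'}$, which follows from $C_W$ being free of rank two over $C_W^{s'}$ and matches $\underline{s'}^2=2\underline{s'}$; secondly, the decomposition $\theta_w\otimes_{C_W}\theta_{s'}\cong \theta_{ws'}\oplus\bigoplus_{z<ws'}\theta_z^{\oplus c_z}$ when $l(ws')>l(w)$, with the multiplicities $c_z$ agreeing with the coefficients appearing in the expansion of $\underline{w}\cdot\underline{s'}$ in the KL basis.

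The main obstacle is this second decomposition. For an arbitrary finite Coxeter system it is essentially the content of Soergel's categorification theorem and is highly nontrivial. For the dihedral groups $D_n$, however, the KL combinatorics is very explicit: the structure constants can be computed directly from iterated products of $\underline{s}$ and $\underline{t}$, and the Bruhat order on $D_n$ is a union of two chains glued at the endpoints. Under these favourable circumstances I would argue by induction on $l(w)$: for a fixed reduced expression $w=s_1\cdots s_{l(w)}$ set $\hat{B}_w=\theta_{s_1}\otimes\cdots\otimes\theta_{s_{l(w)}}$, so that $[\hat{B}_w]$ maps under $\Phi$ to the corresponding product of KL generators in $\mathbb{Z}[D_n]$; then use the uniqueness of $\theta_w$ as the indecomposable summand of $\hat{B}_w$ not appearing in any $\hat{B}_{w'}$ for shorter $w'$ to identify the ``new'' summand, with the lower-order terms on both sides forced to match by the inductive hypothesis together with unitriangularity in Bruhat order.
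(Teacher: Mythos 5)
The paper does not prove this theorem; it is cited to \cite{Soergel92} (with the dihedral case also covered by \cite{Elias}) as established background, so there is no proof in the paper to compare against. Your proposal correctly identifies the architecture of a proof: establish the $\mathbb{Z}$-module isomorphism via the two unitriangular bases, then reduce multiplicativity to checking that tensoring with $\theta_s$ and $\theta_t$ matches multiplication by $\underline{s}$ and $\underline{t}$.

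However, the final inductive paragraph glosses over the heart of the matter. The claim that the lower-order terms on both sides are ``forced to match by the inductive hypothesis together with unitriangularity in Bruhat order'' is not an argument: matching the multiplicities in the Krull--Schmidt decomposition of $\hat{B}_w$ against the KL coefficients of $\underline{s_1}\cdots\underline{s_m}$ is precisely Soergel's categorification theorem, and even for dihedral groups this requires genuine computation with idempotents in $\mathrm{End}(\hat{B}_w)$ or a diagrammatic analysis as in Elias's thesis; unitriangularity only pins down the top term $\theta_w$, not the lower multiplicities. Moreover, the phrase ``so that $[\hat{B}_w]$ maps under $\Phi$ to the corresponding product of KL generators'' is circular as written, since $\Phi$ is a priori only a linear map and its compatibility with products is exactly what you are trying to show. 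So your outline is a reasonable roadmap of the known proof, but it stops short of the substantive step, which the paper sidesteps entirely by citation.
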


\subsection{The cell structure}

From \cite[Lemma~7.2]{Lusztig14}, we have: 

\begin{proposition}\label{leftMult}
For $w \in D_n$, we have
\[
 \underline{t}\cdot  \underline{w} = 
 \begin{cases}
 \underline{t}, & w=e;\\
 \underline{ts}, & w=s;\\
 \underline{s w} + \underline{tw}, & \text{ if } tw > w\text{ and } w\neq e,s;\\
 2 \underline{w}, & \text{ else},
 \end{cases}
\]
and
\[
  \underline{s}\cdot \underline{w} = 
 \begin{cases}
 \underline{s}, & w=e;\\
 \underline{st}, & w=t;\\
  \underline{sw} + \underline{tw}, & \text{ if } sw > w\text{ and } w\neq e,t;\\
  2 \underline{w}, & \text{ else}.
 \end{cases}
\]
\end{proposition}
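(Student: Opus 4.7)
The plan is a direct computation in $\Z[D_n]$, exploiting the very explicit combinatorics of the dihedral group. For each $1\le j\le n-1$ there are precisely two elements of length $j$: one whose unique reduced expression starts with $s$ (call it $\sigma_j$) and one starting with $t$ (call it $\tau_j$); there is one element of length $0$, namely $e$, and one of length $n$, namely $w_0$ (which may be viewed as both $\sigma_n$ and $\tau_n$). With this notation, the definition $\underline{w}=w+\sum_{l(w')<l(w)}w'$ unfolds as
\[
\underline{w}\;=\;e+w+\sum_{j=1}^{l(w)-1}\bigl(\sigma_j+\tau_j\bigr),
\]
while left multiplication by $t$ obeys the shift rules $t\cdot e=\tau_1$, $t\cdot\tau_1=e$, $t\cdot\sigma_j=\tau_{j+1}$ for $j\ge 1$, and $t\cdot\tau_j=\sigma_{j-1}$ for $j\ge 2$; all of these follow at once from $t^2=e$ and the unique-reduced-expression property of non-longest elements in $D_n$.

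Given this setup, I would treat the four cases of the proposition in turn. The cases $w=e$ and $w=s$ reduce to the one-line computations $(e+t)\cdot e=\underline{t}$ and $(e+t)(e+s)=e+s+t+ts=\underline{ts}$. For the case $tw>w$ with $w\ne e,s$, the hypothesis forces $w=\sigma_k$ with $k:=l(w)\ge 2$, so that $tw=\tau_{k+1}$ and $sw=\tau_{k-1}$; plugging the explicit form of $\underline{w}$ into $\underline{t}\,\underline{w}=\underline{w}+t\cdot\underline{w}$ and applying the shift rules collects the result into
\[
2e+2\sum_{j=1}^{k-2}\sigma_j+\sigma_{k-1}+\sigma_k+2\sum_{j=1}^{k-1}\tau_j+\tau_k+\tau_{k+1},
\]
which is exactly $\underline{\tau_{k-1}}+\underline{\tau_{k+1}}=\underline{sw}+\underline{tw}$. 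In the remaining (``else'') case we have $tw<w$, forcing $w=\tau_k$; the shift rules then show that $v\mapsto t\cdot v$ restricts to a bijection of the summand set of $\underline{w}$, the key point being that $t\cdot\tau_k=\sigma_{k-1}$ already lies in that set, whereas the excluded element $\sigma_k$ never arises. Consequently $t\cdot\underline{w}=\underline{w}$ and $\underline{t}\,\underline{w}=2\underline{w}$.

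The formula for $\underline{s}\cdot\underline{w}$ is obtained by the identical argument after swapping $s$ and $t$ throughout. The only part that demands genuine care is the bookkeeping at the boundaries: the subcases $l(w)\in\{1,2\}$, where certain index ranges become empty, and $l(w)\in\{n-1,n\}$, where $\sigma_n$ and $\tau_n$ must be identified with the unique longest element $w_0$ and where $\underline{w_0}$ equals the sum of all group elements (and is therefore fixed by left multiplication). I expect these edge cases to be the main (and essentially the only) obstacle: they require a careful case split rather than any new idea, and once the notation above is fixed, the remainder of the proof is routine index tracking.
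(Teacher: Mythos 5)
The paper does not actually prove this proposition; it simply cites Lusztig's general multiplication formula for Kazhdan--Lusztig bases in Hecke algebras with unequal parameters (\cite[Lemma~7.2]{Lusztig14}) and reads off the specialization. Your argument is a genuinely different, self-contained route, and it is correct: I checked the shift rules $t\cdot e=\tau_1$, $t\cdot\tau_1=e$, $t\cdot\sigma_j=\tau_{j+1}$, $t\cdot\tau_j=\sigma_{j-1}$, and the resulting telescoping in each of the four cases, including the boundary cases $l(w)\in\{1,2\}$ and $l(w)\in\{n-1,n\}$ (where $\sigma_n=\tau_n=w_0$ and $\underline{w_0}$ is the full group sum). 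In the case $tw>w$, $w=\sigma_k$ with $2\le k\le n-1$, the expansion of $\underline{w}+t\underline{w}$ does collapse to $\underline{\tau_{k-1}}+\underline{\tau_{k+1}}=\underline{sw}+\underline{tw}$, and in the case $tw<w$ the map $v\mapsto tv$ permutes the support of $\underline{w}$ exactly as you say. What your approach buys is elementarity and independence: it stays inside $\Z[D_n]$ at $q=1$, using only $t^2=e$ and uniqueness of reduced expressions for non-longest elements, rather than invoking the Hecke-algebraic machinery behind Lusztig's lemma. The one piece of bookkeeping worth making explicit when writing this up is the index range for the shift rule $t\cdot\sigma_j=\tau_{j+1}$, which is valid for $1\le j\le n-1$ only; you flag this implicitly via the convention $\sigma_n=\tau_n=w_0$, and the cases never actually require $j=n$, so there is no gap.
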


Applying $w\mapsto w^{-1}$ to Proposition~\ref{leftMult}, we get a similar result for right 
multiplication with $\underline{s}$ and $\underline{t}$, respectively. 

We write $\mathcal{L}_w$for $\mathcal{L}_{\theta_w}$, the left cell of $\theta_w$ etc. Now, 
the cell structure of $\mathscr{S}_n$ is given as follows, see also \cite[Section 8.7.]{Lusztig14}.
\begin{center}
\begin{picture}(200,100)(0,0)
 \put(90,80){\framebox(20,20){$\theta_e$}}
 \put(40,59.5){\framebox(60,20){$\theta_s,\theta_{sts},\dots$}}
 \put(100,59.5){\framebox(60,20){$\theta_{st},\theta_{stst},\dots$}}
 \put(40,39.5){\framebox(60,20){$\theta_{ts},\theta_{tsts},\dots$}}
 \put(100,39.5){\framebox(60,20){$\theta_t,\theta_{tst},\dots$}}
 \put(90,19){\framebox(20,20){$\theta_{w_0}$}}
 \put(60,85){$\mathcal{L}_s$}
 \put(120,85){$\mathcal{L}_t$}
 \put(25,45){$\mathcal{R}_t$}
 \put(25,65){$\mathcal{R}_s$}
\end{picture}
\end{center}

\begin{example}
 In the case of $\mathscr{S}_4$, this gives us the following cell structure
\newline
\begin{center}
\begin{picture}(200,100)(0,0)
 \put(90,80){\framebox(20,20){$\theta_e$}}
 \put(40,59.5){\framebox(60,20){$\theta_s,\theta_{sts}$}}
 \put(100,59.5){\framebox(60,20){$\theta_{st}$}}
 \put(40,39.5){\framebox(60,20){$\theta_{ts}$}}
 \put(100,39.5){\framebox(60,20){$\theta_t,\theta_{tst}$}}
 \put(90,19){\framebox(20,20){$\theta_{w_0}$}}
 \put(60,85){$\mathcal{L}_s$}
 \put(120,85){$\mathcal{L}_t$}
 \put(25,45){$\mathcal{R}_t$}
 \put(25,65){$\mathcal{R}_s$}
\end{picture}
\end{center}
and the two-sided cells are
\begin{displaymath}
\mathcal{J}_1 = \{\theta_e\},\quad \mathcal{J}_2 = \{\theta_s, \theta_t, \theta_{st}, 
\theta_{ts}, \theta_{sts}, \theta_{tst}\},\quad \mathcal{J}_3 = \{\theta_{w_0} = \theta_{stst} = \theta_{tsts}\}. 
\end{displaymath}
We see that $\mathcal{J}_1$ and $\mathcal{J}_3$ are strongly regular whereas $\mathcal{J}_2$ 
is not as, for example,   the intersection of $\mathcal{L}_s$ and $\mathcal{R}_s$ is not a singleton.
\end{example}

\subsection{$D_n$-modules} \label{dnModules}
The representation theory of $D_n$ is well-known, see e.g. \cite{Weintraub03}. 

Recall that, if $n$ is even, there are four non-isomorphic $1$-dimensional modules, denoted 
$V_{\varepsilon,\delta}$, for $\varepsilon,\delta \in \{-1,1\}$, where $s$ acts 
via $\varepsilon$ and $t$ via $\delta$. If $n$ is odd, there are two 
simple $1$-dimensional modules, namely, $V_{1,1}$ and $V_{-1,-1}$. 

Simple $2$-dimensional modules are $V^{(n,k)}$, where $k\in\{1,2,\dots,\frac{n-2}{2}\}$
for even $n$, and in $\{1,2,\dots,\frac{n-1}{2}\}$ for odd $n$. The matrices of 
$\underline{s}$ and $\underline{t}$ acting on $V^{(n,k)}$ are:
\[
\begin{pmatrix}2 & 0\\0 & 0\end{pmatrix}\qquad\text{ and }\quad
\begin{pmatrix}1+\cos(\frac{2k\pi}{n}) & \sin(\frac{2k\pi}{n}) \\
\sin(\frac{2k\pi}{n}) & 1-\cos(\frac{2k\pi}{n})\end{pmatrix},\qquad\text{ respectively}.
\]
The characteristic polynomial for the element $\underline{s} + \underline{t}$ is thus
\begin{equation}\label{charPolyGroup}
\chi_{n,k}(x) = x^2 -4x + 2 - 2\cos\left(\frac{2k\pi}{n}\right).
\end{equation}

\subsection{The cell $2$-representations for the one element cells}\label{trivialCellModules}

We start by describing cell $2$-representations for the singleton left cells  $\mathcal{L}_e$
and $\mathcal{L}_{w_0}$.
 
\begin{proposition}\label{1ElementCells-1}
For $n\geq 3$, we have $[\mathbf{C}_{\l_e}]^{\C} \simeq V_{-1,-1}$. 
\end{proposition}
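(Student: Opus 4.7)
The plan is to exploit that $\mathcal{L}_e = \{\theta_e\}$ is a singleton, as is visible from the cell structure diagram. By the standard construction recalled in Subsection 3.4 (and the rank formula for cell $2$-representations from \cite{MazorchukMiemietz1}), the rank of $\mathbf{C}_{\mathcal{L}_e}$ equals $|\mathcal{L}_e| = 1$. Hence $[\mathbf{C}_{\mathcal{L}_e}]^{\C}$ is a one-dimensional $\C[D_n]$-module, via the Soergel isomorphism $\Phi$ of Theorem~\ref{Soergel}. Let $L$ denote the unique indecomposable object of $\mathbf{C}_{\mathcal{L}_e}$, which we identify with the image of $\theta_e \in \mathbf{N}$ in the quotient $\mathbf{N}/\mathscr{J}$.

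To pin down which one-dimensional module this is, I would compute the action of the Kazhdan--Lusztig generators $\underline{s}$ and $\underline{t}$ on $[L]$. In $\mathbf{N} \subseteq \mathbf{P}_\clubsuit$, Proposition~\ref{leftMult} gives $\underline{s} \cdot \underline{e} = \underline{s}$, which categorifies to $\theta_s \cdot \theta_e \cong \theta_s$. Since $\theta_s \notin \mathcal{L}_e$, and the indecomposable objects of the cell $2$-representation $\mathbf{C}_{\mathcal{L}}$ are precisely the $\theta_v$ with $v \in \mathcal{L}$, the identity of $\theta_s$ must lie in the maximal $\mathscr{C}$-stable ideal $\mathscr{J}$. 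Hence $\theta_s$ becomes zero in $\mathbf{C}_{\mathcal{L}_e}$, giving $\underline{s} \cdot [L] = 0$. The same reasoning applied to $\underline{t} \cdot \underline{e} = \underline{t}$ yields $\underline{t} \cdot [L] = 0$.

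Since in $\mathbb{Z}[D_n]$ we have $\underline{s} = e + s$ and $\underline{t} = e + t$, the conditions $\underline{s}\cdot [L] = 0$ and $\underline{t}\cdot [L] = 0$ translate into $s$ and $t$ acting by $-1$ on the one-dimensional module. This is precisely the definition of $V_{-1,-1}$ from Subsection~\ref{dnModules}, establishing the claimed isomorphism.

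The one step that is not just bookkeeping is the assertion that $\theta_w$ for $w \notin \mathcal{L}_e$ is killed in the quotient $\mathbf{N}/\mathscr{J}$. This is the main thing to cite or justify carefully; it is part of the standard description of cell $2$-representations (and an immediate consequence of the fact that $\mathbf{C}_{\mathcal{L}_e}$ has rank $|\mathcal{L}_e| = 1$ and hence admits no other non-zero indecomposable objects up to isomorphism). Everything else reduces to reading off a single multiplication in the Kazhdan--Lusztig basis via Proposition~\ref{leftMult}.
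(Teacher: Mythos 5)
Your proof is correct and takes essentially the same approach as the paper: both show that $\theta_s$ and $\theta_t$ become zero in $\mathbf{C}_{\mathcal{L}_e}$, so that $\underline{s}$ and $\underline{t}$ annihilate the one-dimensional decategorification, forcing $s$ and $t$ to act by $-1$. The only (minor) difference is in justifying the vanishing: you appeal to the rank of a cell $2$-representation being $|\mathcal{L}|$ and the characterization of its indecomposables, whereas the paper cites \cite[Lemma~16(i)]{MazorchukMiemietz2} to place $\mathrm{id}_F$ in the maximal $\mathscr{C}$-stable ideal $\mathbf{I}$ for all $F >_L \theta_e$.
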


\begin{proof}
For $\mathcal{L}=\mathcal{L}_e$, each $1$-morphism $F$ satisfies $F \geq_L \l_e$. Thus 
$\mathbf{N}(\c) = \mathscr{S}_n(\c, \c)$. Let $\mathbf{I}$ be the ideal given by Lemma~\ref{SimpTransIdeal}. 
By \cite[Lemma~16(i)]{MazorchukMiemietz2}, if $\text{id}_F$ belongs to $\mathbf{I}$ for some $F$, 
then $\text{id}_G \in \mathbf{I}$ for all $G \geq_L F$. Therefore $\text{id}_F\in\mathbf{I}$ 
for all $F >_L \theta_e$. This means  that both $\theta_s$ and 
$\theta_t$ annihilate $\mathbf{C}_{\mathcal{L}}(\clubsuit)$. Hence both $\underline{s}$ and 
$\underline{t}$ annihilate $[\mathbf{C}_{\l_e}]^{\C}$ which implies 
$[\mathbf{C}_{\mathcal{L}_e}]^{\C} \simeq V_{-1,-1}$.
\end{proof}

\begin{proposition}\label{1ElementCells-2}
For $n\geq 3$, we have that $\mathbf{C}_{\l_{w_0}}$ is equivalent to the natural action of 
$\mathscr{S}_n$ on the additive category of projective objects in $\mathcal{A}$.
Moreover,  $[\mathbf{C}_{\l_{w_0}}]^{\C} \simeq V_{1,1}$. 
\end{proposition}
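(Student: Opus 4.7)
The plan is to identify $\mathbf{C}_{\mathcal{L}_{w_0}}$ with the natural $2$-representation $\mathbf{M}$ of $\mathscr{S}_n$ on the additive subcategory $\mathcal{P} = \mathrm{add}(C_W) \subset \mathcal{A}$ of projective $C_W$-modules. To begin, I would identify $\mathbf{N}(\clubsuit)$: iterating Proposition~\ref{leftMult} shows that $\theta_w \circ \theta_{w_0}$ decomposes as a direct sum of copies of $\theta_{w_0}$ for every $w \in D_n$, so no $1$-morphism strictly exceeds $\theta_{w_0}$ in the left pre-order. Hence $\mathbf{N}(\clubsuit) = \mathrm{add}(\theta_{w_0})$, and $\mathbf{C}_{\mathcal{L}_{w_0}}$ has rank at most one. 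The $2$-representation $\mathbf{M}$ is well defined since Soergel bimodules are free as one-sided modules, so $\theta_w \otimes_{C_W} -$ preserves $\mathcal{P}$; it is transitive because $\theta_{w_0} \otimes_{C_W} C_W \cong C_W \otimes_{\mathbb{C}} C_W \cong C_W^{|D_n|}$ contains $C_W$ as a direct summand.

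The central step is to verify that $\mathbf{M}$ is simple transitive. Let $\mathbf{I}$ be a proper $\mathscr{C}$-stable ideal of $\mathbf{M}$ and set $J := \mathbf{I}(C_W) \subseteq \mathrm{End}_{C_W}(C_W) = C_W$. For a nonzero $j \in J$, the $2$-action of $\theta_{w_0}$ produces the endomorphism $\mathrm{id}_{\theta_{w_0}} \otimes j$ of $C_W \otimes_{\mathbb{C}} C_W$, which in a $\mathbb{C}$-basis of the right tensor factor is represented by the matrix $R(j) \in M_{|D_n|}(\mathbb{C})$ of left multiplication by $j$ on $C_W$. Since $j \neq 0$, some entry of $R(j)$ is a nonzero complex number, hence a unit of $C_W$, and left-right multiplication by matrix units then shows that $R(j)$ generates the whole matrix ring $M_{|D_n|}(C_W) = \mathrm{End}_{C_W}(C_W^{|D_n|})$ as a two-sided ideal. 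Therefore $\mathrm{id}_{C_W^{|D_n|}} \in \mathbf{I}$, and since $C_W$ is a direct summand of $C_W^{|D_n|}$ also $\mathrm{id}_{C_W} \in \mathbf{I}$, contradicting properness of $\mathbf{I}$.

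Finally, both $\mathbf{M}$ and $\mathbf{C}_{\mathcal{L}_{w_0}}$ are simple transitive of rank one with apex the strongly regular maximal two-sided cell $\mathcal{J}_{w_0} = \{\theta_{w_0}\}$, which contains only the left cell $\mathcal{L}_{w_0}$. An application of Theorem~\ref{IsoFaithClassification} (after passing to the $\mathcal{J}_{w_0}$-simple quotient provided by Theorem~\ref{J-simple}) gives the desired equivalence $\mathbf{M} \simeq \mathbf{C}_{\mathcal{L}_{w_0}}$. For the second statement, $[\mathbf{M}]^{\mathbb{C}}$ is one-dimensional with $\underline{s}$ and $\underline{t}$ each acting by $2$, since $\theta_s \otimes_{C_W} C_W \cong C_W \otimes_{C_W^s} C_W \cong C_W^2$ (and analogously for $\theta_t$); this matches $V_{1,1}$. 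I expect the main obstacle to be the simple transitivity argument for $\mathbf{M}$.
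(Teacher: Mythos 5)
Your proof is correct, but it takes a genuinely different route from the paper's. The paper does not know that $\mathbf{C}_{\mathcal{L}_{w_0}}$ or the natural action on projectives is simple transitive a priori; instead, it first proves from scratch (Lemma~\ref{simpeltTop} and Corollary~\ref{longestbimodule}) that $\theta_{w_0}\cong C_W\otimes C_W$ --- the author explicitly states that he could not find a complete proof of this in the literature --- and then constructs the required equivalence by hand: the $2$-natural transformation $\Phi:\mathbf{P}_{\c}\to\overline{\mathcal{A}}$ sending $\mathbbm{1}_{\c}$ to the simple $C_W$-module $L$ is shown to kill $C_W\otimes\mathrm{Rad}(C_W)$ and to map $\mathbf{N}=\mathrm{add}(\theta_{w_0})$ onto $\mathcal{A}_{\mathrm{proj}}$, giving the equivalence directly. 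You instead (i) take $\theta_{w_0}\cong C_W\otimes_{\C}C_W$ as known, (ii) prove simple transitivity of the natural action on $\mathcal{A}_{\mathrm{proj}}$ by a matrix-unit argument inside $\mathrm{End}_{C_W}(C_W^{|D_n|})=M_{|D_n|}(C_W)$, and (iii) invoke the apex/strong-regularity machinery (Theorem~\ref{J-simple} plus Theorem~\ref{IsoFaithClassification}, or equivalently Theorem~\ref{apexTheorem}) to identify this simple transitive $2$-representation with $\mathbf{C}_{\mathcal{L}_{w_0}}$. Both routes are sound. Yours is shorter and more conceptual, and your direct simple-transitivity argument is a nice observation that the paper does not make; the trade-off is that you lean on the unproved identification $\theta_{w_0}\cong C_W\otimes_{\C}C_W$, which is precisely the point the paper goes to some length to justify (and the intermediate Lemma~\ref{simpeltTop} is also reused elsewhere), and you use heavier abstract classification theorems where the paper builds the equivalence explicitly. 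If you wanted your argument to stand on its own in this paper, you would need to either supply a proof of $\theta_{w_0}\cong C_W\otimes_{\C}C_W$ or give a precise citation for it.
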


\begin{proof}
We start by collecting some facts about $C_W$ that will help us to describe $\mathbf{C}_{\l_{w_0}}$. 
From \cite[Claim~2.2.1]{Elias} it follows that $C_W$ is free of rank $2$ over both $C_W^s$  and $C_W^t$.
Moreover, $C_W$ is local since it is a non-negatively graded algebra whose degree zero part is simple and which
has finite dimension $2l(w_0) = 2n$, see, for example, \cite[Corollary~3.10]{Hiller82}. Thus there is a 
unique, up to isomorphism,  indecomposable projective module $P$ in $C_W$-mod. Let $L$ be its simple top. 
 
Next we  claim that $\theta_{w_0}\cong C_W \otimes C_W$. This must be well-known for any $W$; 
the case of the dihedral group is stated in \cite{EliasWilliamson14}. However, we did not manage to 
find a full proof of this fact and hence we give a proof in the dihedral case here. 
We consider the defining $2$-representation of 
$\mathscr{S}_n$, that is the natural action of $\mathscr{S}_n$ on $\mathcal{A}$. Via the equivalence 
between $\mathcal{A}$ and $C_W$-mod, this gives a weak action of $\mathscr{S}_n$ on $C_W$-mod.
 
\begin{lemma}\label{simpeltTop}
Let $L$ denote the unique, up to isomorphism, simple $C_W$-module.  
\begin{enumerate}[$($a$)$]
\item\label{simpeltTop.1} We have $\text{dim}(\theta_w\: L) = 2l(w)$, for all $w \neq e$.
\item\label{simpeltTop.2} The module $\theta_w\: L$ has simple top, for all $w \in D_n$. In particular,
the module $\theta_w\: L$ is indecomposable.
\end{enumerate}
\end{lemma}
 
\begin{proof}
  To prove claim~\eqref{simpeltTop.1}, we proceed by induction on the length of $w$. 
  To prove the basis of the induction, let  $w = s$. Then 
 \[
  \theta_s\: L = C_W \otimes_{C_W^s} C_W \otimes_{C_W} L \simeq C_W \otimes_{C_W^s} L.
 \]
 Since $C_W$ is free of rank $2$ over $C_W^s$, we get that $\text{dim}(\theta_sL) = 2 = 2l(s)$,
 as required. Similar arguments work in the case $w = t$. 
 
 Now assume claim~\eqref{simpeltTop.1} holds for $w$ such that $l(w) \leq k$. 
 Let $w'$ be an element of length $k+1$. Then we either have $w'=sw$ or we have $w'=tw$
 for some element $w$ of length $k$. We consider the case $w'=sw$, the other case being similar.
 We have
 \begin{equation}\label{eqnn2}
  \theta_s\theta_w \:L = \theta_{sw} \:L \oplus \theta_{tw}\: L
 \end{equation}
 by Proposition~\ref{leftMult} and Theorem~\ref{Soergel}.
 Since $\theta_s$ is exact (as $\mathscr{S}_n$ is fiat), using the basis of the induction and the inductive assumption, we have 
 \begin{equation}\label{eqnn3}
 \text{dim}(\theta_s\theta_w L) = 2 \text{dim}(\theta_w L) = 4l(w). 
 \end{equation}
 Combining \eqref{eqnn2} with \eqref{eqnn3} and using the inductive assumption, we have
 \[
  \text{dim}(\theta_{sw} L) = 4l(w) - \text{dim}(\theta_{tw}L) = 4l(w) - (2(l(w) - 1)) = 2(l(w)+1) = 2l(sw).
 \]
 This proves claim~\eqref{simpeltTop.1}.
 
 Claim~\eqref{simpeltTop.2} is obvious for $w=e$ since $\theta_e L = L$. To prove claim~\eqref{simpeltTop.2}
 for other $w$, we again proceed by induction on $l(w)$. If $w=s$, we note that $\theta_s\: L$ is isomorphic,
 as a $C_W$-module, to $\mathrm{Ind}^{C_W}_{C_W^s}\: L^{(s)}$, where $L^{(s)}$ is the unique, up to isomorphism,
 simple $C_W^s$-module (note that $\dim(L^{(s)})=1$). Therefore, by adjunction,
 \begin{displaymath}
 \mathrm{Hom}_{C_W}(\mathrm{Ind}^{C_W}_{C_W^s}\: L^{(s)},L)=
 \mathrm{Hom}_{C_W^s}(L^{(s)},\mathrm{Res}^{C_W}_{C_W^s}\: L).
 \end{displaymath}
 The space on the right hand side is $1$-dimensional as $L^{(s)}$ is $1$-dimensional. 
 This implies claim~\eqref{simpeltTop.2}
 for $w=s$. For $w=t$, we can use a similar argument.

 Before we can make a general induction step, we have to consider the cases $w=st$ and $w=ts$. We consider
 the first case, the second one is similar. By adjunction,  
 \begin{equation}\label{eqnn4}
  \text{Hom}_{C_W}(\theta_s\theta_t\: L,L) = \text{Hom}_{C_W}(\theta_t \:L,\theta_s\: L).
 \end{equation}
 By the previous paragraph, both $\theta_t \:L$ and $\theta_s\: L$ are  indecomposable modules
 of dimension two with simple top and socle isomorphic to $L$. Therefore the right hand side of \eqref{eqnn4} 
 is one-dimensional (which is exactly what we need) unless $\theta_t \:L\simeq \theta_s\: L$. To prove that 
 $\theta_t \:L\not\simeq \theta_s\: L$, it is enough to show that these two modules are annihilated by different 
 elements of $\mathfrak{h}$.  That, in turn, is equivalent to the fact that $\mathfrak{h}$ does not have non-zero
 elements whose linear span is invariant with respect to both $s$ and $t$. The latter means exactly that the 
 $W$-module $\mathfrak{h}$ is simple, which is the case.

 To make the general induction step, we can now assume that claim~\eqref{simpeltTop.2} is true for all $w$
 such that $l(w)\leq k$ and that $k\geq 2$. Let $w'$ be of length $k+1$. Then we can write $w'=sw$
 or $w'=tw$ for some $w$ of length $k$. We consider the case $w'=sw$, the other one being similar.
 We have
 \[
  \text{dim}(\text{Hom}_{C_W}(\theta_s\theta_w\: L,L)) = \text{dim}(\text{Hom}_{C_W}(\theta_w \:L,\theta_s\: L))
 \]
 by adjunction. At the same time, we also have $2\leq \text{dim}(\text{Hom}_{C_W}(\theta_s\theta_w\: L,L))$ as
 $\theta_s\theta_w \:L = \theta_{sw}\:L \oplus \theta_{tw}\:L$ by Proposition~\ref{leftMult}.
 Moreover, $\text{dim}(\text{Hom}_{C_W}(\theta_w \:L,\theta_s\: L))\leq 2$ as $\theta_w \:L$ has simple
 top by inductive assumption and $\theta_s\: L$ has dimension two by claim~\eqref{simpeltTop.1}.
 Therefore $\text{dim}(\text{Hom}_{C_W}(\theta_s\theta_w\: L,L))=2$. As 
 $\theta_s\theta_w \:L = \theta_{sw}\:L \oplus \theta_{tw}\:L$ and both summands are non-zero,
 each of the summands must contribute with at least one homomorphism. Therefore
 $\text{dim}(\text{Hom}_{C_W}(\theta_{sw}\: L,L)) = 1$, as required.
 \end{proof}
 
 \begin{corollary}\label{longestbimodule}
 We have $\theta_{w_0} \simeq C_W\otimes C_W$.
 \end{corollary}

 \begin{proof}
 By Lemma~\ref{simpeltTop}\eqref{simpeltTop.1}, we have $\text{dim}(\theta_{w_0}\:L) = 2l(w_0) = 2n = \text{dim}(P)$.
 Furthermore, by Lemma~\ref{simpeltTop}\eqref{simpeltTop.2}, the module $P$ surjects onto $\theta_{w_0}\:L$. 
 Therefore we have $P\simeq \theta_{w_0}\: L$. Since $\theta_{w_0}$ is indecomposable and $\mathscr{S}_n$ 
 is fiat, we have $\theta_{w_0}$ is isomorphic to tensoring with a projective $C_W$-$C_W$-bimodule by 
 \cite[Lemma~13]{MazorchukMiemietz5}. Thus $\theta_{w_0}$ is isomorphic to $C_W \otimes C_W$, as claimed.
 \end{proof}
 
 Now, to describe $\mathbf{C}_{\l_{w_0}}$, we need to describe the corresponding  $\mathbf{N}$ and $\mathbf{I}$. As
 the left cell $\{\theta_{w_0}\}$ is maximal with respect to the left order, we have 
 $\mathbf{N} = \text{add}(\{\theta_{w_0}\})$. 
 Note that, if $I$ is an ideal in $C_W$, then $C_W \otimes I$ is a left $2$-ideal of $C_W \otimes C_W$.
 This implies $\mathbf{I} \supset C_W \otimes \text{Rad}(C_W)$. 
 
 Consider the $2$-natural transformation $\Phi$ from  $\mathbf{P}_{\c}$ to $\mathcal{A}$ which sends $\mathbbm{1}_{\c}$ to $L$.
 Then $\Phi$, clearly, annihilates $C_W \otimes \text{Rad}(C_W)$ (viewed as endomorphism of 
 $\mathbbm{1}_{\c}$). Moreover, $\Phi$ maps $\mathbf{N}$ to the category $\mathcal{A}_{\mathrm{proj}}$ 
 of projective objects in $\mathcal{A}$. 
 Thus $\Phi$ induces a $2$-natural transformation from $\mathbf{C}_{\l_{w_0}}$ to 
 $\mathcal{A}_{\mathrm{proj}}$ which maps $\theta_{w_0}$ to $\theta_{w_0}\: L\simeq P$. By 
 construction, this $2$-natural transformation is an equivalence.
 
 As $\theta_s\theta_{w_0} = \theta_t\theta_{w_0} = \theta_{w_0} \oplus \theta_{w_0}$, we see that 
 $\underline{s}$ and $\underline{t}$ act as the scalar $2$ on $[\mathbf{C}_{\mathcal{L}_{w_0}}]^{\C}$. 
 Since $e$ acts as the identity, we see that both $s$ and $t$ act as the identity and hence 
 $[\mathbf{C}_{\mathcal{L}_{w_0}}]^{\C} \simeq V_{1,1}$. This completes the proof.
 \end{proof}

\subsection{Decategorification of the cell $2$-representations of $\mathscr{S}_4$}
\label{DecatS4}

Let us now describe the decategorifications of the cell $2$-representations of 
$\mathscr{S}_4$. As we have seen above, $\mathscr{S}_4$ has the following $4$ left cells.
\[
  \mathcal{L}_e = \{\theta_e\},\, \mathcal{L}_s = \{\theta_s, \theta_{ts}, \theta_{sts}\}, \,\mathcal{L}_t = \{\theta_t, \theta_{st},\theta_{tst}\},\, \mathcal{L}_{w_0} = \{\theta_{w_0} = \theta_{stst} = \theta_{tsts}\}
\]
Moreover, from the previous subsection we have that 
\[
 [\mathbf{C}_{\mathcal{L}_e}]^{\C} \simeq V_{-1,-1}, \quad [\mathbf{C}_{\mathcal{L}_{w_0}}]^{\C} \simeq V_{1,1}.
\]

Consider $\l_s$ and denote by $M = [\mathbf{C}_{\mathcal{L}_s}]^{\C}$ the decategorification of $\mathbf{C}_{\mathcal{L}_s}$. Then $M$ is generated by 
$[\theta_s], [\theta_{sts}], [\theta_{ts}]$, and in this basis
we can use Proposition~\ref{leftMult} to get the following matrices for the action of $[\theta_s]$ and $[\theta_t]$, respectively:
\begin{displaymath}
\begin{pmatrix}
                2 & 0 & 1 \\
		0 & 2 & 1 \\
		0 & 0 & 0 
                \end{pmatrix} \qquad
\begin{pmatrix}
                0 & 0 & 0 \\
		0 & 0 & 0 \\
		1 & 1 & 2 
                \end{pmatrix}.
\end{displaymath}
The characteristic polynomials are $p_s(x) = x(x-2)^2$  for $[\theta_s]$ and $p_t(x) = x^2(x-2)$  for $[\theta_t]$. 
This yields that $M$ can only be isomorphic to either $V_{1,1} \oplus V_{1,-1} \oplus V_{-1,-1}$ or 
$V^{(4,1)}_1 \oplus V_{1,-1}$. The first case can be excluded
since the principal $2$-representation decategorifies to the regular representation of $D_4$ which implies
that the simple $2$-dimensional $D_4$-module appears with multiplicity two in the union of all cell $2$-representations.
Thus $M \simeq V^{(4,1)}_1 \oplus V_{1,-1}$. A similar argument shows that 
$[\mathbf{C}_{\mathcal{L}_t}]^{\C} \simeq V^{(4,1)} \oplus V_{-1,1}$.
In particular, the decategorifications of $\mathbf{C}_{\l_s}$ and $\mathbf{C}_{\l_t}$ are not isomorphic 
as $D_4$-modules. Consequently, we get the following:

\begin{proposition}\label{LeftCellsNotEquivalent}
The $2$-representations $\mathbf{C}_{\l_s}$ and $\mathbf{C}_{\l_t}$ are not equivalent.
\end{proposition}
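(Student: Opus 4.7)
The plan is to deduce non-equivalence from the already-established non-isomorphism of decategorifications, using the basic principle that decategorification is functorial, so equivalent $2$-representations produce isomorphic modules over the decategorified $2$-category.

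First I would note the standard fact that if $\Phi: \mathbf{M} \to \mathbf{N}$ is an equivalence of finitary $2$-representations of $\mathscr{S}_4$, then applying the split Grothendieck group (with $\C$-coefficients) to $\Phi_{\c}$ yields a $\C$-linear isomorphism $[\mathbf{M}(\c)]^{\C} \to [\mathbf{N}(\c)]^{\C}$ that intertwines the actions of $[F]$ for every $1$-morphism $F$. By Theorem~\ref{Soergel}, the ring $[\mathscr{S}_4(\c,\c)]^{\C}$ is identified with $\C[D_4]$, so equivalent $2$-representations have isomorphic decategorifications as $\C[D_4]$-modules.

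Next I would invoke the computation carried out in Subsection~\ref{DecatS4}: we have
\[
[\mathbf{C}_{\l_s}]^{\C} \simeq V^{(4,1)} \oplus V_{1,-1} \qquad \text{and} \qquad [\mathbf{C}_{\l_t}]^{\C} \simeq V^{(4,1)} \oplus V_{-1,1}.
\]
These are semisimple $\C[D_4]$-modules, and the one-dimensional summands $V_{1,-1}$ and $V_{-1,1}$ are non-isomorphic since $\underline{s}=e+s$ acts on them as the scalars $2$ and $0$, respectively (equivalently, their characters are distinct). By the Krull--Schmidt property for semisimple $\C[D_4]$-modules, the two direct sums above are non-isomorphic. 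A quick sanity check: the trace of $\underline{s}$ on $V^{(4,1)}$ equals $2$, so $\mathrm{tr}(\underline{s})=4$ on $[\mathbf{C}_{\l_s}]^{\C}$ but $\mathrm{tr}(\underline{s})=2$ on $[\mathbf{C}_{\l_t}]^{\C}$, confirming non-isomorphism directly.

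Combining the two paragraphs, an equivalence $\mathbf{C}_{\l_s}\simeq \mathbf{C}_{\l_t}$ would force $[\mathbf{C}_{\l_s}]^{\C}\simeq [\mathbf{C}_{\l_t}]^{\C}$ as $\C[D_4]$-modules, contradicting what we just established. Hence no such equivalence exists. There is no substantial obstacle here: the whole content is packed into the decategorification computation performed earlier, and the only step to write out carefully is the functoriality of decategorification, which is part of the standard framework recalled in the preliminaries.
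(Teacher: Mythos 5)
Your proof is correct and follows essentially the same route as the paper: the paper derives Proposition~\ref{LeftCellsNotEquivalent} immediately from the computation in Subsection~\ref{DecatS4} that $[\mathbf{C}_{\l_s}]^{\C} \simeq V^{(4,1)}\oplus V_{1,-1}$ and $[\mathbf{C}_{\l_t}]^{\C} \simeq V^{(4,1)}\oplus V_{-1,1}$ are non-isomorphic $D_4$-modules. You merely spell out the functoriality of decategorification and the trace check, which the paper leaves implicit.
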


\section{Simple transitive $2$-representations of $\mathscr{S}_n$}\label{ssimple}

\subsection{Irreducible matrices and Perron-Frobenius Theorem}

In this section we study simple transitive $2$-representations of $\mathscr{S}_n$, where $n\geq 3$. 

We define the \emph{rank} of a $2$-representation $\mathbf{M}$ as the number of 
isomorphism classes of indecomposable objects in $\mathbf{M}(\c) =: \mathcal{B}$. 

Fix a collection $X_1, \dots, X_n$ of representatives of isomorphism classes  of indecomposable objects
in $\mathcal{B}$. Then the combinatorics of the action of any $1$-morphism $F$ on $\mathcal{B} = \mathbf{M}(\c)$ 
is encoded in the matrix $\Lparen F\Rparen$. Observe that $\Lparen F\Rparen$ has non-negative integer
entries. Hence, we can use the results by Perron and Frobenius on the structure of 
non-negative matrices, see \cite{Frobenius08, Frobenius09, Perron07} and also \cite{Meyer00} for 
a modern version. 

A matrix $Q = (a_{ij}) \in \R^{n\times n}$ is called \emph{non-negative} if all 
$a_{ij}\geq 0$. For a non-negative $A$, its \emph{action graph} $G_Q$
has vertices $\{1, 2, \dots, n\}$ and a directed edge from $i$ to $j$ if $a_{ji} \neq 0$. 
$Q$ is called  \emph{irreducible} if its action graph is strongly connected. 

\begin{theorem}[Perron-Frobenius]\label{PerronFrobenius}
Each non-negative and irreducible $Q \in \R^{n\times n}$ has 
a positive real eigenvalue $\lambda$ such that any other (complex) 
eigenvalue  $\mu$ of $Q$ satisfies $|\mu|<|\lambda|$. Moreover,
the algebraic multiplicity of $\lambda$ is one.
\end{theorem}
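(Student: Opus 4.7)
The statement is the classical Perron--Frobenius Theorem, so the plan is to follow the standard variational (Collatz--Wielandt) approach, which gives all three conclusions in one package. First I would establish the existence of a dominant positive eigenvalue. Consider the simplex $\Delta = \{x \in \R^n_{\geq 0} : \sum_i x_i = 1\}$ and the function $r: \Delta \to \R$ given by $r(x) = \min\{(Qx)_i/x_i : x_i > 0\}$. By irreducibility, replacing $x$ by $(I+Q)^{n-1}x$ (which has only strictly positive entries once $x \neq 0$) one reduces to working on the relative interior of $\Delta$, where $r$ is continuous; by compactness it attains a maximum value $\lambda > 0$. A standard perturbation argument (inserting a small positive bump in the direction of any coordinate where $Qv - \lambda v$ is non-zero) shows that a maximizer $v$ actually satisfies $Qv = \lambda v$, and irreducibility forces $v$ to be strictly positive: if some $v_i = 0$, the strongly connected action graph would produce a coordinate $j$ with $v_j = 0$ but $(Qv)_j > 0$, contradicting the eigenvalue equation.

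Second, I would deduce simplicity of $\lambda$. Apply the same construction to $Q^t$, which is also non-negative and irreducible, to obtain a strictly positive left eigenvector $u$ for a dominant eigenvalue of $Q^t$; using the Collatz--Wielandt inequalities in both directions one sees this eigenvalue must equal $\lambda$. For geometric simplicity: if $w$ is any other real eigenvector for $\lambda$, then for a suitable real scalar $c$ the vector $v - cw$ is non-negative with at least one zero coordinate, contradicting the fact from the previous step that every non-zero eigenvector for $\lambda$ is strictly positive. Algebraic simplicity then follows by pairing: a generalized eigenvector $w$ satisfying $(Q - \lambda I)w = v$ would give $u^t v = u^t(Q - \lambda I)w = 0$, contradicting strict positivity of $u$ and $v$.

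Third, for the spectral dominance bound, take any complex eigenvalue $\mu$ with eigenvector $z \in \C^n$. Applying the entry-wise triangle inequality to $Qz = \mu z$ yields $Q|z| \geq |\mu|\,|z|$ componentwise, and by Collatz--Wielandt this forces $|\mu| \leq \lambda$. The hard part, and the main obstacle in the statement as written, is the \emph{strict} inequality $|\mu| < \lambda$: in general this requires primitivity (aperiodicity) of $Q$, not merely irreducibility, since otherwise the spectrum can contain the full orbit $\lambda \cdot \zeta^k$ for $\zeta$ a primitive $h$-th root of unity. Under the primitivity hypothesis, one argues that equality $|\mu| = \lambda$ forces $|z|$ to be a positive multiple of $v$, and then a phase-alignment argument on the components of $z$ (combined with the fact that some power $Q^m$ has strictly positive entries) forces $\mu = \lambda$. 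For a complete argument in the form required here I would defer to \cite{Meyer00}.
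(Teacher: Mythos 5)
The paper does not actually prove Theorem~\ref{PerronFrobenius}; it is cited as a classical result (with pointers to Frobenius, Perron, and Meyer), so there is no internal proof to compare your sketch against. Your Collatz--Wielandt sketch is the standard textbook route and the existence, positivity, geometric simplicity via the sign argument, and algebraic simplicity via the left/right eigenvector pairing $u^t v = u^t(Q-\lambda I)w = 0$ are all sound.

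More importantly, you have correctly identified a genuine defect in the theorem \emph{as stated}: the strict inequality $|\mu|<\lambda$ does not follow from irreducibility alone. The permutation matrix $\left(\begin{smallmatrix}0&1\\1&0\end{smallmatrix}\right)$ is non-negative and irreducible with eigenvalues $1$ and $-1$, both of modulus~$1$. Strict spectral dominance requires primitivity (aperiodicity). Two remarks put this in context. First, the paper's only use of the theorem (in the proof of Lemma~\ref{L6}) invokes just the \emph{algebraic simplicity} of the spectral radius, which does hold for any irreducible non-negative matrix, so the paper's conclusions are unaffected. Second, in the paper's situation the matrix $Q=\Lparen\theta_s\oplus\theta_t\Rparen$ has the block form of Corollary~\ref{Cor5} with $2$'s on the whole diagonal; an irreducible matrix with a positive diagonal entry is automatically primitive, so the strict inequality would in fact be available there if needed. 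Your decision to defer the full phase-alignment argument for the strict bound to \cite{Meyer00} is reasonable, but it would be cleaner to note explicitly that you are adding the primitivity hypothesis, or alternatively to replace the strict inequality in the statement by $|\mu|\leq\lambda$, which is both correct under irreducibility alone and sufficient for the paper's purposes.
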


\begin{proposition}\label{transitiveIsStronglyConnected}
For any transitive $2$-representation $\mathbf{M}$ of $\mathscr{S}_n$, we have that
the matrix $\Lparen \mathbf{M}(\theta_s \oplus \theta_t)\Rparen$ is non-negative
and irreducible. 
\end{proposition}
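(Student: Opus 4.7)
My plan is to verify the two claims in turn. Non-negativity of $\Lparen \mathbf{M}(\theta_s \oplus \theta_t)\Rparen$ is immediate, since the entries of this matrix count multiplicities of indecomposable direct summands and are therefore non-negative integers. For irreducibility, I would fix representatives $X_1, \ldots, X_r$ of the isomorphism classes of indecomposable objects in $\mathbf{M}(\clubsuit)$ and show, for each ordered pair $(i,j)$, that there is a directed path from $i$ to $j$ in the action graph of $\theta_s \oplus \theta_t$.

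The key input is transitivity of $\mathbf{M}$: it supplies a $1$-morphism $F$ of $\mathscr{S}_n$ such that $X_j$ is an indecomposable summand of $F\: X_i$. Decomposing $F$ into indecomposable $1$-morphisms, I may assume $F \cong \theta_w$ for some $w \in D_n$. If $w = e$, then $F\: X_i = X_i$, forcing $X_i \cong X_j$, so the trivial path of length zero handles this case. If $w \neq e$, I would fix a reduced expression $w = s_1 s_2 \cdots s_k$ with $s_\ell \in \{s,t\}$ and invoke the defining property of the indecomposable Soergel bimodules recalled in Section~\ref{sSoergel}: $\theta_w$ is by construction a direct summand of $\hat{B}_w \cong \theta_{s_1} \circ \cdots \circ \theta_{s_k}$. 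Hence $X_j$ is a summand of $(\theta_{s_1} \circ \cdots \circ \theta_{s_k})\: X_i$.

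What remains is a Krull--Schmidt trace-back along the tower $Z_0 := X_i$, $Z_\ell := \theta_{s_{k-\ell+1}}\: Z_{\ell-1}$. Since $X_j$ is an indecomposable summand of $Z_k = \theta_{s_1}\: Z_{k-1}$, decomposing $Z_{k-1}$ into indecomposables and applying $\theta_{s_1}$ summand-wise yields some indecomposable summand $Y_{k-1}$ of $Z_{k-1}$ with $X_j$ a summand of $\theta_{s_1}\: Y_{k-1}$. Iterating this step produces a chain $X_i = Y_0, Y_1, \ldots, Y_k = X_j$ of indecomposable objects such that $Y_\ell$ is a summand of $\theta_{s_{k-\ell+1}}\: Y_{\ell-1}$, which is precisely a directed path of length $k$ in the action graph of $\theta_s \oplus \theta_t$. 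The only mildly delicate point is this Krull--Schmidt backtracking, which relies on the Krull--Schmidt property of the finitary category $\mathbf{M}(\clubsuit)$; the remaining bookkeeping is essentially automatic.
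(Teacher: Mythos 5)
Your proof is correct and follows essentially the same approach as the paper: use transitivity to obtain $\theta_w$ with $X_j$ a summand of $\theta_w\,X_i$, then use the fact that $\theta_w$ is a summand of $\theta_{s_1}\circ\cdots\circ\theta_{s_m}$ for a reduced expression of $w$ to conclude that $X_j$ is reachable from $X_i$ in the action graph. The only difference is one of presentation: you spell out the Krull--Schmidt backtracking step explicitly, whereas the paper passes directly from ``$X_j$ is a summand of $(\theta_s\oplus\theta_t)^m\,X_i$'' to strong connectivity via the standard fact that a nonzero $(j,i)$-entry of $Q^m$ witnesses a length-$m$ path from $i$ to $j$.
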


\begin{proof}
The non-negativity of $Q:=\Lparen \mathbf{M}(\theta_s \oplus \theta_t)\Rparen$ is already explained above.
Let $1 \leq i,j \leq k$ with $i\neq j$. As $\mathbf{M}$ is transitive, there is $\theta_w$,
with $w\neq e$, such that $X_j$ is isomorphic to a direct summand of $\theta_w X_i$.
If $w=s_1s_2\cdots s_m$ is a reduced expression, then $\theta_w$ is a direct summand of 
$\theta_{s_1}\theta_{s_2}\cdots \theta_{s_m}$ and hence also of $(\theta_s \oplus \theta_t)^m$.
This implies that $G_Q$ is strongly connected and hence  $Q$ is irreducible. 
\end{proof}

\subsection{Rough combinatorics of simple transitive 
$2$-representations of $\mathscr{S}_n$}\label{sStructofMat}

Let $\mathbf{M}$ be a simple transitive $2$-rep\-re\-sen\-ta\-ti\-on of $\mathscr{S}_n$. 
If we assume that $\theta_{w_0}\mathbf{M}(\c) \neq 0$, then, by \cite[Theorem 4]{MazorchukMiemietz6}, 
we get  $\mathbf{M} \simeq \mathbf{C}_{\mathcal{L}_{w_0}}$. If, on the other hand, 
we have  $(\theta_s \oplus \theta_t)\:\mathbf{M}(\c) = 0$, 
then $\mathbf{M} \simeq \mathbf{C}_{\mathcal{L}_e}$ 
by the discussion in Subsection~\ref{trivialCellModules}.

So, from now on, we assume $\theta_{w_0}\:\mathbf{M}(\c) = 0$ and 
$(\theta_s \oplus \theta_t)\:\mathbf{M}(\c) \neq 0$. 
Let $X_1, \dots, X_r$ be a complete and irredundant list of pairwise non-isomorphic 
indecomposable objects in $\mathbf{M}(\clubsuit)$. We will in some of the proofs consider 
the abelianization $\overline{\mathbf{M}}$ of $\mathbf{M}$ and then denote the   
indecomposable projective object $0 \to X_i$ in $\overline{\mathbf{M}}(\c)$ by $P_i$. 
Moreover, for $i=1,2,\dots,r$, we denote by $L_i\in \overline{\mathbf{M}}(\c)$ the simple top of $P_i$.

\begin{lemma}\label{L3}
There exists an ordering of $X_1, \dots, X_r$ such that 
\begin{equation*}
\Lparen\theta_s\Rparen = \left(
\begin{array}{c|c}
2I_k & B \\\hline
0 &   0  
\end{array} 
\right),
\end{equation*}
where $1\leq k \leq r$, the matrix $B$ is non-negative and $I_k$ is the 
$(k\times k)$-identity matrix.
\end{lemma}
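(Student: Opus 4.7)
The plan is to combine the relation $\theta_s\circ\theta_s\simeq\theta_s^{\oplus 2}$ (which categorifies $\underline{s}^2 = 2\underline{s}$ via Proposition~\ref{leftMult} and Theorem~\ref{Soergel}) with the self-adjointness $\theta_s^*\simeq\theta_s$ to extract the block shape of $M := \Lparen\theta_s\Rparen$. The first relation immediately yields $M^2 = 2M$, so $M$ is diagonalisable over $\Q$ with spectrum in $\{0,2\}$ and every non-zero column is an eigenvector for the eigenvalue $2$.

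First I verify that $M\neq 0$, which forces $k\geq 1$. If we had $\theta_s\:\mathbf{M}(\c)=0$, then $\underline{s}\cdot\underline{t}=\underline{st}$ would give $\theta_{st}\:\mathbf{M}(\c)=0$, and $\underline{t}\cdot\underline{st}=\underline{tst}+\underline{t}$ would then force $\theta_t\:\mathbf{M}(\c)=0$ as well, contradicting the standing assumption $(\theta_s\oplus\theta_t)\mathbf{M}(\c)\neq 0$. Next, I set
\[
T := \{i : X_i\text{ is a direct summand of }\theta_s\:X_j\text{ for some }j\}
\]
and permute the indecomposables so that $T=\{1,\ldots,k\}$; then rows $k+1,\ldots,r$ of $M$ vanish by construction, producing the zero bottom block of the claimed form.

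The heart of the argument is to show that for each $i\in T$ the $i$-th column of $M$ equals $2e_i$, i.e.\ $\theta_s\:X_i\simeq X_i\oplus X_i$. Fixing $j$ with $X_i\mid\theta_s\:X_j$ and writing $\theta_s\:X_j\simeq m X_i\oplus Z$ with $X_i\nmid Z$, the application of $\theta_s$ combined with $\theta_s^2\simeq\theta_s^{\oplus 2}$ yields $m\:\theta_s\:X_i\oplus\theta_s\:Z\simeq 2m X_i\oplus 2Z$. In particular every indecomposable summand of $\theta_s\:X_i$ belongs to $T$, so the submatrix $M|_{T\times T}$ still satisfies $X^2 = 2X$ while having no zero row or column. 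The crucial final ingredient is the self-adjointness of $\theta_s$: by \cite[Lemma~10]{MazorchukMiemietz5} it gives $\llbracket\theta_s\rrbracket = M^t$, and translating the adjunction $\mathrm{Hom}(\theta_s\:A,B)\cong\mathrm{Hom}(A,\theta_s\:B)$ into Krull-Schmidt multiplicities on $M|_{T\times T}$ pins down $\theta_s\:X_i\simeq X_i^{\oplus 2}$; the remaining entries $M_{i,j}$ for $i\in T$, $j\notin T$ then form the non-negative block $B$. I expect this last step to be the main obstacle: $M^2=2M$ with non-negativity alone is too weak (the all-ones $2\times 2$ matrix satisfies $X^2=2X$ but is not in block form), so the self-duality of $\theta_s$ is essential to rule out such exotic configurations.
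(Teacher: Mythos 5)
Your setup is on the right track: $\Lparen\theta_s\Rparen^2 = 2\Lparen\theta_s\Rparen$, the spectrum lies in $\{0,2\}$, the image of $M$ coincides with the eigenspace for $2$, and $M\neq 0$ follows from the relations among the $\theta_w$. Your definition of the index set $T$ and the observation that all summands of $\theta_s X_i$ stay inside $T$ are also fine. But the proof stops exactly where the real work begins: you never actually establish the key claim that $\theta_s X_i\simeq X_i^{\oplus 2}$ for $i\in T$, i.e.\ that $M|_{T\times T}=2I$. You write that the adjunction identity $\llbracket\theta_s\rrbracket = \Lparen\theta_s\Rparen^t$ ``pins down'' this decomposition, and then in the very next breath acknowledge that the all-ones matrix $\left(\begin{smallmatrix}1&1\\1&1\end{smallmatrix}\right)$ is a potential counterexample. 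That matrix is non-negative, satisfies $X^2=2X$, \emph{and is symmetric}, so it is perfectly compatible with self-adjointness of $\theta_s$; self-duality alone cannot rule it out. You have correctly located the obstruction but not removed it.

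This is precisely the case $a_i=1$ in the paper's notation (where $\theta_s X_i = a_i X_i\oplus Y_i$ with $X_i\nmid Y_i$). The paper excludes it by an argument of a completely different flavour: it isolates the fiat $2$-full $2$-subcategory $\mathscr{C}\subset\mathscr{S}_n$ whose indecomposable $1$-morphisms are $\theta_e$ and $\theta_s$, restricts the action to a suitable $\theta_s$-stable additive subcategory, passes to a transitive quotient, and then invokes the classification of simple transitive $2$-representations of a fiat $2$-category with strongly regular cells (\cite[Theorem~18]{MazorchukMiemietz5}) to conclude that the resulting matrix of $[\theta_s]$ must be $(0)$ or $(2)$, not $\left(\begin{smallmatrix}1&1\\1&1\end{smallmatrix}\right)$. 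A further adjunction argument handles the cases $a_i=2$ (to show $Y_i=0$) and $a_i=0$ (to show the remaining summands have $a_j=2$). None of this is visible in your proposal, and without some substitute for the $\{\theta_e,\theta_s\}$-subcategory trick, the lemma does not follow from $M^2=2M$ plus self-adjointness.
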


\begin{proof} 
For $i=1,\dots,r$, write $\theta_s X_i=a_i X_i\oplus Y_i$, where $a_i\in\mathbb{Z}_{\geq 0}$  and 
$Y_i$ does not have $X_i$ as a summand. We want to prove that either $a_i = 2$ and $Y_i = 0$ 
or $a_i=0$ and every summand $X_j$ of $Y_i$ satisfies $\theta_s X_j = 2 X_j$. 
From $\theta_s^2=\theta_s\oplus\theta_s$, we have
\begin{equation}\label{eq2508-1}
a_i^2 X_i \oplus a_i Y_i  \oplus \theta_s\,Y_i\simeq 2a_i X_i\oplus 2 Y_i.
\end{equation}
If $a_i>2$, we have that the multiplicity $2a_i$ of $X_i$ on the right hand side is strictly smaller
than the multiplicity, which is at least $a_i^2$, of $X_i$ on the left hand side, a contradiction. 
Therefore $a_i\in\{0,1,2\}$. If $a_i=2$, then \eqref{eq2508-1} implies $\theta_s\,Y_i=0$.
If $a_i=0$, then \eqref{eq2508-1} implies $\theta_s\,Y_i=2 Y_i$.

Consider the case $a_i=1$. Then \eqref{eq2508-1} implies $\theta_s\,Y_i=X_i\oplus  Y_i$.
This means that there is a unique indecomposable direct summand $X_j$ of $Y_i$ such that 
$\theta_s\,X_j=X_i\oplus  Z$. Note that $i\neq j$.
Write $Y_i= X_j\oplus U$. Then neither $X_i$ nor $X_j$ are direct summands
of $U$. We claim that neither $X_i$ or $X_j$ are summands of $\theta_s\,U$. For this
it is enough to show that $X_j$ is not a summand of $\theta_s\,U$. If 
$X_j$ is a direct summand of $\theta_s\,U$, then $X_j$ is not a direct summand of $Z$. 
Therefore $X_j$ is not a direct summand of
\begin{displaymath}
\theta^2_s\,X_j=\theta_s\,X_j\oplus \theta_s\,X_j\cong X_i\oplus  Z \oplus X_i\oplus  Z 
\end{displaymath}
either. This, however, contradicts the fact that $X_j$ is a direct summand of $\theta_s\,X_i$
and the latter is a direct summand of $\theta^2_s\,X_j=\theta_s\,(X_i\oplus  Z)$.
Hence $\theta_s\,U$ is in the additive closure of $U$, moreover, 
$\theta_s\, X_i=X_i\oplus X_j\oplus U$ and $\theta_s\, X_j=X_i\oplus X_j\oplus U'$,
where $U'$ is in the additive closure of $U$.

Consider the fiat $2$-full $2$-subcategory $\mathscr{C}$ of $\mathscr{S}_n$ whose indecomposable 
$1$-mor\-phisms are all $1$-morphisms isomorphic to $\theta_e$ and $\theta_s$. Note that, clearly, 
all two-sided cells in $\mathscr{C}$ are strongly regular. Consider the 
$2$-representation $\mathbf{N}$ of $\mathscr{C}$ given by restricting the action of $\mathscr{C}$ to the additive
closure of $X_i\oplus X_j\oplus U$ (the latter is closed under the action of $\mathscr{C}$  by the computation in the
previous paragraph). Let $\mathbf{I}$ be the ideal in $\mathbf{N}$ generated by
$\mathrm{id}_U$. Then the representation $\mathbf{N}/\mathbf{I}$ is transitive and hence has a simple
transitive top. From the previous paragraph, we see that the matrix of $[\theta_s]$ for this
simple transitive $2$-representation is
\begin{displaymath}
\left(\begin{array}{cc}1&1\\1&1\end{array}\right). 
\end{displaymath}
A simple transitive $2$-representation of $\mathscr{C}$  is equivalent to a cell two-representation
by \cite[Theorem~18]{MazorchukMiemietz5}. Therefore the matrix of $[\theta_s]$ for a simple transitive 
$2$-rep\-re\-sen\-ta\-tion can only be either the matrix $(0)$ (in case of the cell $2$-representation corresponding
to $\theta_e$) or $(2)$ (in case of the cell $2$-representation corresponding
to $\theta_s$). This is a contradiction which shows that the
case $a_i=1$ cannot occur.

If $a_i=2$, then, as mentioned above, $\theta_s\,Y_i=0$. Let $X_j$ be a non-zero direct summand of $Y_i$. Then,
by adjunction, we have 
\begin{displaymath}
0\neq \mathrm{Hom}_{\overline{\mathbf{M}}(\c)}(\theta_s\, X_i,L_j)=
\mathrm{Hom}_{\overline{\mathbf{M}}(\c)}(X_i,\theta_s\, L_j)
\end{displaymath}
and hence $\theta_s\, L_j\neq 0$. Therefore $\theta_s\, X_j\neq 0$ as $\theta_s$ is exact. This 
contradicts $\theta_s\,Y_i=0$ and implies that $Y_i=0$. 

Finally, assume that $a_i=0$. As mentioned above, in this case we have $\theta_s\,Y_i=2 Y_i$.
Let $X_j$ be a direct summand of $Y_j$. Then the adjunction argument from the previous
paragraph implies $\theta_s\, L_j\neq 0$ and hence $\theta_s\, X_j\neq 0$.
Write $Y_i=U\oplus V$, where, for each direct summand $X_j$ of $U$, we have $a_j=2$ while for
for each direct summand $X_j$ of $V$ we have $a_j=0$. Then $U\oplus U$ is a direct summand of $\theta_s\, U$
and hence $\theta_s\, V$ belongs to the additive closure of $V$. 

We claim that $V=0$. Define $I\subset \{1,2,\dots,r\}$ via 
$\mathrm{add}(V)=\mathrm{add}(\{X_i:i\in I\})$.
We want to show that $I=\varnothing$. If $I\neq \varnothing$, let $I'\subset I$ be minimal, with respect to inclusions,
such that $\mathrm{add}(\{X_i:i\in I'\})$ is $\theta_s$-invariant. Assume that $I'$ consists
of more than one element and let $m\in I'$. From the definition of $V$ and $I'$ we have that 
$\theta_s\,X_m$ belongs to the additive closure of $\{X_i:i\in I',i\neq m\}$.
By the minimality of $I'$, we have $\theta_s\,X_m\neq 0$. However, as
$\theta_s^2=\theta_s\oplus\theta_s$, the additive closure of $\theta_s\,X_m$ must be
$\theta_s$-invariant, which again contradicts minimality of $I'$. Therefore $I'=\{i\}$, for some $i$,
and hence $\theta_s\,X_i=0$ by the definition of $V$. This contradicts $\theta_s\, X_i\neq 0$
established in the previous paragraph. Therefore $V=0$.

Now we see that, if we first take all $X_j$ such that $a_j=2$ and then all $X_j$
such that $a_j=0$, then $\Lparen \theta_s\Rparen$ will have the required form. This completes the proof.
\end{proof}

By symmetry, the analogous result holds for $\theta_t$, however, the orderings on the basis
elements for $\theta_s$ and $\theta_t$ might be different. 

\begin{lemma}\label{L4}
There exists no $1 \leq i \leq r$ such that 
$\theta_s\:P_i = 2 P_i = \theta_t\:P_i$.
\end{lemma}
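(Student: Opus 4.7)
The plan is to argue by contradiction: such an $X_i$ would force $\mathbf{M}$ to have rank one, and a rank-one simple transitive $2$-representation on which both $\theta_s$ and $\theta_t$ act non-trivially must decategorify to $V_{1,1}$, on which $\theta_{w_0}$ acts non-trivially, contradicting the standing assumption $\theta_{w_0}\,\mathbf{M}(\c)=0$.

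First, I would translate the assumption on $P_i$ to the corresponding statement on $X_i$. Since the projectives in $\overline{\mathbf{M}}(\c)$ are precisely the objects of the form $0\to X$ for $X\in\mathbf{M}(\c)$, with $P_i$ corresponding to $X_i$, and since the exact functor $\theta_s$ on $\overline{\mathbf{M}}(\c)$ preserves projectives, one sees that $\theta_s\,P_i\cong 2P_i$ if and only if $\theta_s\,X_i\cong 2X_i$ in $\mathbf{M}(\c)$; the analogous equivalence holds for $\theta_t$.

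Next, I would observe that these two isomorphisms imply that $\mathrm{add}(X_i)$ is closed under $\theta_s$ and $\theta_t$, and hence under the whole of $\mathscr{S}_n$, since every indecomposable $\theta_w$ is a direct summand of a product of $\theta_s$'s and $\theta_t$'s. Transitivity of $\mathbf{M}$ now forces $\mathrm{add}(X_i)=\mathbf{M}(\c)$, so $\mathbf{M}$ has rank one, every $\theta_w$ acts on $[X_i]$ as a non-negative integer $c_w$, and $[\mathbf{M}(\c)]^{\C}$ becomes a one-dimensional $D_n$-module on which $\underline{s}$ and $\underline{t}$ both act as $2$; by Subsection~\ref{dnModules} this module must be $V_{1,1}$.

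The contradiction then follows: in $V_{1,1}$ both $s$ and $t$ act as $1$, and from the definition $\underline{w}=w+\sum_{w'\,:\,l(w')<l(w)}w'$ together with the fact that in the dihedral group every element is shorter than $w_0$, we obtain $\underline{w_0}=\sum_{w\in D_n}w$, which acts on $V_{1,1}$ as $|D_n|=2n>0$. Hence $c_{w_0}>0$, forcing $\theta_{w_0}\,X_i\neq 0$ and contradicting $\theta_{w_0}\,\mathbf{M}(\c)=0$. The one point requiring some care is the legitimacy of passing between the actions on $P_i$ and on $X_i$; everything else reduces to Proposition~\ref{leftMult}, the explicit Kazhdan-Lusztig basis in type $D_n$, and the classification of one-dimensional $D_n$-modules from Subsection~\ref{dnModules}.
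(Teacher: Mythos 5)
Your proof is correct and follows essentially the same route as the paper: closure of $\mathrm{add}(X_i)$ under $\theta_s,\theta_t$ plus transitivity forces rank one, the decategorification is then forced to be $V_{1,1}$, and $\underline{w_0}$ does not annihilate $V_{1,1}$, contradicting the standing assumption $\theta_{w_0}\,\mathbf{M}(\c)=0$. The only difference is cosmetic: you spell out the $P_i$-versus-$X_i$ translation and compute $\underline{w_0}\cdot V_{1,1}=2n$ explicitly, whereas the paper simply asserts that $V_{1,1}$ is not annihilated by $\underline{w_0}$.
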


\begin{proof}
If such $P_i$ exists, then $\text{add}(\{P_i\})$
is closed under $\theta_s$ and $\theta_t$ and thus
under $\mathscr{S}_n$. From the transitivity of ${\mathbf{M}}$,
we get ${\mathbf{M}}(\c)=\text{add}(\{P_i\})$ and $r=1$. We also have
$[\theta_s]=[\theta_t]=(2)$, which implies that $\mathbf{M}$ decategorifies
to the trivial $D_n$-module. The latter, however, is not annihilated by 
$\underline{w_0}$.  Thus $\theta_{w_0}\: \mathbf{M}(\c) \neq  0$, a contradiction.
\end{proof}

\begin{corollary}\label{Cor5}
There exists an ordering of $X_1, \dots, X_r$ such that 
\begin{equation*}
\Lparen\theta_s \oplus \theta_t\Rparen = \left(
\begin{array}{c|c}
 2I_k & {B} \\ \hline
 B' & 2I_{r-k}
\end{array}
\right),
\end{equation*}
where $B$ and $B'$ are non-negative and $1 \leq k < r$.
\end{corollary}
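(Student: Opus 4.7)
The plan is to combine Lemma~\ref{L3}, applied to both $\theta_s$ and $\theta_t$, with Lemma~\ref{L4} and the transitivity of $\mathbf{M}$. Applying Lemma~\ref{L3} to $\theta_s$ produces a subset $S\subseteq\{X_1,\dots,X_r\}$ on which $\theta_s$ acts by doubling, whose complement $T$ satisfies $\theta_s\: X_i\in\text{add}(S)$ for $X_i\in T$. An analogous decomposition $\{X_1,\dots,X_r\}=S'\sqcup T'$ arises from $\theta_t$. Lemma~\ref{L4} then gives $S\cap S'=\varnothing$, so $S'\subseteq T$ and $S\subseteq T'$.

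The key observation is that $\text{add}(S\cup S')$ is closed under both $\theta_s$ and $\theta_t$, and hence under the full action of $\mathscr{S}_n$: on objects of $S$, $\theta_s$ acts by doubling while $\theta_t$ maps into $\text{add}(S')$ (because $S\subseteq T'$), and symmetrically for $S'$. The standing assumption $(\theta_s\oplus\theta_t)\: \mathbf{M}(\c)\neq 0$ guarantees that at least one indecomposable summand lies in $S\cup S'$, and then transitivity of $\mathbf{M}$ forces $\text{add}(S\cup S')=\mathbf{M}(\c)$; equivalently, $T\cap T'=\varnothing$ and $\{X_1,\dots,X_r\}=S\sqcup S'$. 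Ordering the indecomposables so that the $k:=|S|$ objects of $S$ come first and the $r-k$ objects of $S'$ come last then reads off the desired block form of $\Lparen\theta_s\oplus\theta_t\Rparen$ directly from the two applications of Lemma~\ref{L3}.

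The step I expect to be the main obstacle is ruling out the degenerate cases $k=0$ and $k=r$, which are needed for the bound $1\leq k<r$. A purely combinatorial argument at the level of Lemmas~\ref{L3} and~\ref{L4} is unavailable, so I plan to pass to the decategorification and exploit the non-negativity of Kazhdan-Lusztig structure constants. Assuming $S=\varnothing$ would give $[\theta_s]=0$; the identities $\underline{s}\,\underline{t}=\underline{st}$ and $\underline{t}\,\underline{st}=\underline{t}+\underline{tst}$ from Proposition~\ref{leftMult}, combined with Theorem~\ref{Soergel}, then force first $[\theta_{st}]=[\theta_s][\theta_t]=0$ and then $[\theta_{tst}]=[\theta_t][\theta_{st}]-[\theta_t]=-[\theta_t]$. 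Since $[\theta_{tst}]$ has non-negative entries while $-[\theta_t]$ is non-positive, both must vanish, contradicting $(\theta_s\oplus\theta_t)\:\mathbf{M}(\c)\neq 0$. The case $k=r$ follows by symmetry, completing the proof.
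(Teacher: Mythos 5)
Your argument is correct and takes essentially the same route as the paper's: both apply Lemma~\ref{L3} to $\theta_s$ and to $\theta_t$, use Lemma~\ref{L4} for disjointness of the two doubling sets, and invoke transitivity to rule out indecomposables outside $S\cup S'$ (your observation that $\mathrm{add}(S\cup S')$ is $\mathscr{S}_n$-stable is equivalent to the paper's ``zero row'' argument). For the bound $1\leq k<r$ the paper simply reads it off the $1\leq k\leq r$ built into Lemma~\ref{L3} for each of $\theta_s$ and $\theta_t$; your Kazhdan--Lusztig computation is a valid substitute, though it can be shortened by noting that $\theta_s$ and $\theta_t$ lie in the same two-sided cell, so $[\theta_s]=0$ would at once force $[\theta_t]=0$, contradicting $(\theta_s\oplus\theta_t)\:\mathbf{M}(\clubsuit)\neq 0$.
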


\begin{proof}
By Lemma \ref{L3}, there is  an ordering of $X_1, \dots X_r$ and a matrix $B$ such that 
\begin{equation*}
 \Lparen\theta_s\Rparen = \left(
    \begin{array}{c|c}
     2I_k & B \\\hline
      0 &   0  
    \end{array} 
\right).
\end{equation*}
Moreover, Lemma \ref{L3} for $[\theta_t]$ and Lemma \ref{L4} imply that 
in the same  ordering holds
\begin{equation*}
\Lparen\theta_t\Rparen = \left( 
\begin{array}{c|c}
0 & 0 \\ \hline
B' & 2I_l
\end{array}
\right)
\end{equation*}
for some $l>0$ such that $k+l\leq r$. If $k + l < r$, then there exists a zero row in the  
matrix $[\theta_t \oplus \theta_s]$  which implies that no power of $[\theta_s \oplus \theta_t]$ 
can be  totally positive. This contradicts transitivity of $\mathbf{M}$ and thus $l = r-k$.
\end{proof}

\subsection{The $D_3$-case}
                                             
The special case $n = 3$ is the case of Soergel 
bimodules in type $A_2$. By \cite[Theorem 18]{MazorchukMiemietz5}, any 
simple transitive $2$-representation of $\mathscr{S}_3$ is equivalent to a cell 
$2$-representation. The cell $2$-representation 
$\mathbf{C}_{\mathcal{L}_s}\cong \mathbf{C}_{\mathcal{L}_t}$ decategorifies to the
unique simple $2$-dimensional $D_3$-module.

\subsection{First results for simple transitive $2$-representations}

From now on we assume $n > 3$. Our first result  is 
a generalization of \cite[Proposition~21]{MazorchukMiemietz5}.

\begin{theorem}\label{noSimple}
Let $\mathbf{M}$ be a finitary $2$-representation of $\mathscr{S}_n$. 
If the $[\mathscr{S}_n(\clubsuit,\clubsuit)]^{\C}$-module $[\mathbf{M}(\c)]^{\C}$ is simple, 
then $[\mathbf{M}(\c)]^{\C} \simeq V_{1,1}$ or $[\mathbf{M}(\c)]^{\C} \simeq V_{-1,-1}$.
\end{theorem}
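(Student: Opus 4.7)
I would identify $[\mathscr{S}_n(\clubsuit,\clubsuit)]\otimes_{\Z}\C$ with $\C[D_n]$ via Soergel's Theorem~\ref{Soergel} and case-split on which simple $\C[D_n]$-module could equal $[\mathbf{M}(\clubsuit)]^{\C}$. The recurring weapon is that, in the basis of isomorphism classes of indecomposables of $\mathbf{M}(\clubsuit)$, each $[\theta_w]$ is realised by the non-negative integer matrix $\Lparen\theta_w\Rparen$, so every entry and every trace must be a non-negative integer. A preliminary remark: simplicity of $[\mathbf{M}(\clubsuit)]^{\C}$ forces $\mathbf{M}$ to be transitive, since for any nonzero indecomposable $X$ the submodule of $[\mathbf{M}(\clubsuit)]^{\C}$ generated by $[X]$ is nonzero, hence all of it; translating back through the basis $\{[X_i]\}$ of the split Grothendieck group shows $\mathbf{G}_{\mathbf{M}}(\{X\}) = \mathbf{M}$.

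For the one-dimensional case, $\mathbf{M}$ has rank one, so $[\theta_s]$ and $[\theta_t]$ are non-negative integers; the decomposition $\theta_s^2 \cong \theta_s\oplus\theta_s$ forces $[\theta_s]\in\{0,2\}$, and similarly for $[\theta_t]$. This leaves the four characters $V_{\pm 1,\pm 1}$. To exclude $V_{1,-1}$, I would evaluate $\underline{sts} = e+s+t+st+ts+sts$ under $s\mapsto 1$, $t\mapsto -1$ and obtain $-2$, contradicting non-negativity of $[\theta_{sts}]$; the case $V_{-1,1}$ is symmetric.

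For the two-dimensional case, suppose $[\mathbf{M}(\clubsuit)]^{\C}\simeq V^{(n,k)}$, so $\mathbf{M}$ has rank two. By Corollary~\ref{Cor5} applied to the transitive $\mathbf{M}$, after reordering
\[
[\theta_s] = \begin{pmatrix} 2 & b \\ 0 & 0 \end{pmatrix},\qquad
[\theta_t] = \begin{pmatrix} 0 & 0 \\ b' & 2 \end{pmatrix},
\]
with $b,b'\geq 1$ (by irreducibility, Proposition~\ref{transitiveIsStronglyConnected}). Taking traces, $bb' = \mathrm{tr}(\underline{s}\,\underline{t}|_{V^{(n,k)}}) = 2+2\cos(2k\pi/n)$, which forces $\cos(2k\pi/n)\in\{-\tfrac12,0,\tfrac12\}$ and leaves only the short list of pairs with $n\in\{3k,4k,6k\}$. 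For each such pair I would compute $\mathrm{tr}[\theta_{v_m}]$ for odd $m\leq n$, where $v_m = sts\cdots$ has length $m$; using the closed form $\mathrm{tr}(\underline{v_m}|_{V^{(n,k)}}) = 2\sin(mk\pi/n)/\sin(k\pi/n)$ (which one derives by telescoping the sum of traces arising from the Bruhat-order expansion $\underline{v_m} = \sum_{v'\leq v_m} v'$) and choosing $m$ with $\sin(mk\pi/n)<0$, one obtains a negative integer trace — the required contradiction.

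The main obstacle is that the trace argument leaves a few very small exceptional pairs (notably $(n,k)=(4,1)$ and $(6,1)$) for which no odd $m\leq n$ makes $\sin(mk\pi/n)$ negative, and for which the analogous even-$m$ computations are also non-negative. For those cases I would turn to the fiat/abelian structure: the identity $\llbracket\theta_s\rrbracket = \Lparen\theta_s\Rparen^{t}$ from \S\ref{MatInGroGr} shows that the simple $L_2\in\overline{\mathbf{M}}(\clubsuit)$ is annihilated by $\theta_s$, and dually that $L_1$ is annihilated by $\theta_t$; adjunction then forces $L_2$ to be neither a top nor a socle constituent of $\theta_s\,L_1$, and combining this Loewy-structural datum with the explicit composition series of longer products $\theta_w\,L_i$ (dictated by the cell-theoretic decompositions in $\Z[D_n]$) is where the remaining contradictions for these exceptional pairs must come from.
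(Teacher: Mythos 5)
Your strategy for the two-dimensional case diverges from the paper's in a way that is genuinely cleaner for generic parameters, but you leave a real gap precisely at the two pairs $(n,k)=(4,1)$ and $(6,1)$, and these are the ones that carry the weight. The identity $\mathrm{tr}\,\Lparen\theta_{v_m}\Rparen = 2\sin(mk\pi/n)/\sin(k\pi/n)$ for the element $v_m$ of odd length $m$ is correct, and picking an odd $m<n$ with $\sin(mk\pi/n)<0$ immediately yields a negative integer where a non-negative one is required; this dispatches $\cos(2k\pi/n)=-\tfrac12$ entirely (take $m=5$, valid for all $n=3k>3$), and $\cos=0$ except $(4,1)$ and $\cos=\tfrac12$ except $(6,1)$. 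For $\cos=-\tfrac12$ the paper instead computes $\Lparen\theta_{sts}\Rparen=0$ from the explicit $2\times 2$ matrices and contradicts $\theta_{sts}$ lying in the same two-sided cell as $\theta_s$; your trace version reaches the same conclusion more uniformly. Your evaluation of $\underline{sts}$ on $V_{1,-1}$ to exclude it in the rank-one case is also correct and more self-contained than the paper's citation of \cite[Proposition~22]{MazorchukMiemietz5}. (A small imprecision, shared with the paper: Corollary~\ref{Cor5} is stated for simple transitive $\mathbf{M}$; one should pass to the simple transitive quotient, noting that the matrices $\Lparen F\Rparen$ are unchanged by this.)

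The gap: for the pairs $(4,1)$ and $(6,1)$ every $\mathrm{tr}\,\Lparen\theta_w\Rparen$ is genuinely non-negative (for $(4,1)$ the traces are $2,2,2,2,0$ as $l(w)$ runs over $0,\dots,4$, and similarly for $(6,1)$), so no trace argument is available, and what you offer in their place is an outline, not a proof --- the phrase ``is where the remaining contradictions must come from'' is a placeholder. These are not marginal cases; $(4,1)$ is $\mathscr{S}_4$, the entire subject of the paper. The paper's treatment of the subcases $\cos=0$ and $\cos=\tfrac12$, applied uniformly for all $n$ in each subcase, is the substantive work: from $\llbracket\theta_s\rrbracket=\Lparen\theta_s\Rparen^t$ and adjunction, $\theta_s L_1$ is uniserial of Loewy length three with top and socle $L_1$; any nonsplit extension of $L_1$ by $L_2$ is a quotient of $\theta_s L_1$, hence $\dim\mathrm{Ext}^1(L_1,L_2)=1$; this forces the middle layer $M$ of $\theta_t L_2$ to be indecomposable; by adjunction $\theta_s M$ has simple top $L_1$ and is therefore indecomposable; but $\theta_s M \simeq \theta_s\theta_t\theta_s L_1$ must split as $\theta_s L_1\oplus\theta_{sts}L_1$ with both summands nonzero, a contradiction. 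Your sketch reproduces only the opening move of this chain. Without the $\mathrm{Ext}^1$ bound and the two indecomposability arguments there is no contradiction, so as it stands the proposal does not prove the theorem.
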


\begin{proof}
Recall that we have exactly three two-sided cells in $\mathscr{S}_n$, namely
\[
\mathcal{L}_e = \mathcal{J}_1 = \{\theta_e\}, \quad \mathcal{J}_2 = 
\setof{\theta_w}{1\leq l(w) \leq n-1}, \quad \mathcal{L}_{w_0} = \mathcal{J}_3 = \{\theta_{w_0}\}.
\]
From above, we know that $\mathcal{J}_2$ consists of two left cells $\mathcal{L}_s$ and $\mathcal{L}_t$.
Furthermore, as we have seen above, the cell $2$-representations $\mathbf{C}_{\mathcal{L}_e}$ and 
$\mathbf{C}_{\mathcal{L}_{w_0}}$ categorify $V_{-1,-1}$ and $V_{1,1}$, respectively. 
Note that, by the same argument as in \cite[Proposition 22]{MazorchukMiemietz5}, we get that 
$[\mathbf{M}(\c)]^{\C} \ncong V_{1,-1}$ and $[\mathbf{M}(\c)]^{\C} \ncong V_{-1,1}$, 
which of course only could occur in case $n$ is even. 
Thus it is left to prove that $[\mathbf{M}(\c)]^{\C}$ is not isomorphic to 
any of the $2$-dimensional simple $\C[D_n]$-modules described in Subsection~\ref{dnModules}.

Assume that $[\mathbf{M}(\c)]^{\C} \simeq V^{(n,k)}$ for some $1 \leq k \leq \frac{n-1}{2}$.
Then we have that the matrix $X := \Lparen\theta_s \oplus \theta_t\Rparen$ has
the same characteristic polynomial as the element $\underline{s} + \underline{t}$ acting on $V^{(n,k)}$. 
However, this polynomial has to have integer coefficients since $X$, 
by construction, has only non-negative integer coefficients.
Hence, we have to check for which $k$ the polynomial
\[
 \chi_{n,k}(x) = x^2 - 4x  + 2 - 2\cos\left(\frac{2k\pi}{n}\right) 
\]
has integer coefficients, that is when $2\cos(\frac{2k\pi}{n}) \in \Z$. 
This is equivalent to asking for which $n$ and $1 \leq k \leq \frac{n-1}{2}$ 
we have $\cos(\frac{2k\pi}{n}) \in \frac{1}{2}\Z \cap (-1,1) = 
\{-\frac{1}{2},0,\frac{1}{2}\}$. 

In the {\bf first} case, that is  when $\cos(\frac{2k\pi}{n}) = -\frac{1}{2}$, we get 
$k = \frac{n}{3}$ and thus $n \in 3\Z$. In this case, $\chi_{\frac{n}{3}}(x) = x^2 - 4x + 3 = (x-3)(x-1)$ 
and thus $X$ has eigenvalues $1$ and $3$, trace $4$ and determinant $3$. 
This means that, up to reordering of the basis elements, $X$ coincides with one of the following
matrices, where $a$ is a non-negative integer:
\begin{displaymath}
\begin{pmatrix}3 & a\\0 & 1\end{pmatrix},\quad
\begin{pmatrix}3 & 0\\a & 1\end{pmatrix}\quad\text{ or }\quad
\begin{pmatrix}2 & 1\\1 & 2\end{pmatrix}.
\end{displaymath}
Taking Corollary~\ref{Cor5} and Lemma~\ref{L3} into account, up to swapping $s$ and $t$, we have 
\begin{displaymath}
X=\begin{pmatrix}2 & 1\\1 & 2\end{pmatrix},\quad
\Lparen\theta_s\Rparen=\begin{pmatrix}2 & 1\\0 & 0\end{pmatrix},\quad
\Lparen\theta_t\Rparen=\begin{pmatrix}0 & 0\\1 & 2\end{pmatrix}.
\end{displaymath}
However, in this case a direct computation using the fact that 
$\theta_{sts} = \theta_s\theta_t\theta_s - \theta_s$ shows that $\Lparen\theta_{sts}\Rparen = 0$.
This is a contradiction, because $\Lparen\theta_{sts}\Rparen = 0$ implies that $\theta_{sts}$ 
acts as zero, which implies that $\theta_s$ acts as zero as these two $1$-morphisms are in the same two-sided cell.
This excludes the first case.

In the {\bf second} case, that is when $\cos(\frac{2k\pi}{n}) = 0$, we get $k = \frac{n}{4}$
and thus $n \in 4\Z$. In this case, 
$\chi_{\frac{n}{4}}(x) = x^2 - 4x + 2 = (x - 2 - \sqrt{2})(x - 2 + \sqrt{2})$ and thus 
$X$ has eigenvalues $2 \pm \sqrt{2}$, trace $4$ and the determinant $2$. 
This means that, up to reordering of the basis elements, $X$ coincides with one of the following
matrices:
\[
  \begin{pmatrix}
      3 & 1\\
      1 & 1
     \end{pmatrix} \text{ or } 
  \begin{pmatrix}
      2 & 2\\
      1 & 2
     \end{pmatrix}.
\]
Taking Corollary~\ref{Cor5} and Lemma~\ref{L3} into account, up to swapping $s$ and $t$, we have 
\begin{equation}\label{possx2}
X=\begin{pmatrix}2 & 2\\1 & 2\end{pmatrix},\quad
\Lparen\theta_s\Rparen=\begin{pmatrix}2 & 2\\0 & 0\end{pmatrix},\quad
\Lparen\theta_t\Rparen=\begin{pmatrix}0 & 0\\1 & 2\end{pmatrix}.
\end{equation}

In the {\bf third} case, that is when $\cos(\frac{2k\pi}{n}) = \frac{1}{2}$, we get 
$k = \frac{n}{6}$ and thus $n \in 6\Z$. In this case, 
$\chi_{\frac{n}{6}}(x) = x^2 - 4x + 1 = (x - 2 + \sqrt{3})(x - 2 - \sqrt{3})$. 
Therefore $X$ has the eigenvalues $2 \pm \sqrt{3}$, trace $4$ and determinant $1$.
This means that, up to reordering of the basis elements, $X$ coincides with one of the following
matrices:
\[
 \begin{pmatrix}
      3 & 2\\
      1 & 1
     \end{pmatrix},\quad \begin{pmatrix}
      3 & 1\\
      2 & 1
     \end{pmatrix} \text{ or } 
  \begin{pmatrix}
      2 & 3\\
      1 & 2
     \end{pmatrix}.
\]
Taking Corollary~\ref{Cor5} and Lemma~\ref{L3} into account, up to swapping $s$ and $t$, we have 
\begin{equation}\label{possx3}
X=\begin{pmatrix}2 & 3\\1 & 2\end{pmatrix},\quad
\Lparen\theta_s\Rparen=\begin{pmatrix}2 & 3\\0 & 0\end{pmatrix},\quad
\Lparen\theta_t\Rparen=\begin{pmatrix}0 & 0\\1 & 2\end{pmatrix}.
\end{equation}
 
The following arguments which exclude the  cases given by \eqref{possx2} and \eqref{possx3}
follow closely the proof of \cite[Proposition 22]{MazorchukMiemietz5}.

Assume that we are in the situation  given by \eqref{possx2} and consider $\overline{\mathbf{M}}$.
Now, denote by $L_1$ and $L_2$ the simple objects in $\overline{\mathbf{M}}(\c)$ and their indecomposable projective
covers by $P_1$ and $P_2$, respectively. Recall that, since $\theta_s$ is self-adjoint, we have 
$\llbracket \theta_s \rrbracket^t = \Lparen \theta_s \Rparen$ and thus $\theta_s L_2 = 0$. Similarly, we 
see that $\theta_t L_1 = 0$.

Consider $\theta_s L_1$. It has length $3$, with two copies of $L_1$ and one copy of 
$L_2$ in its Jordan-H\"{o}lder filtration. However, by adjunction we have that, 
for any simple module $L$, the module $\theta_w L$ can only have a simple $L'$ in its 
top or socle if $\theta_{w^{-1}} L' \neq 0$. Thus $\theta_s L_1$ cannot be semisimple 
and must have $L_1$ as top and socle.  Therefore it is a uniserial module of Loewy 
length three with top and socle isomorphic to $L_1$. Similarly, the module 
$\theta_t L_2$ has top and socle isomorphic to $L_2$.

Denote by $M$ the homology of the middle term of the complex
\[
 0 \rightarrow L_2 \rightarrow \theta_t\, L_2 \rightarrow L_2 \rightarrow 0.
\]
Then $M$ has length two with both simple subquotients isomorphic to $L_1$. This leaves us with two cases for $M$: 
either $M \simeq L_1 \oplus L_1$ or $M$ is indecomposable. 

Let us now show that $\text{dim}\:\text{Ext}^1(L_1,L_2) = 1$ and that the unique 
non-split extension in this space is obtained as a  quotient of $\theta_s\, L_1$. 
For this, let $N$ be a non-split extension of length two with top $L_1$ and socle $L_2$. 
Then, since $\theta_s L_2 = 0$  and $\theta_s$ is exact, adjunction gives
\[
 \text{dim}\: \text{Hom}(\theta_s\, L_1, N) = \text{dim}\: \text{Hom}(L_1, \theta_s\, N) = \text{dim}\:\text{Hom}(L_1, \theta_s\, L_1) = 1.  
\]
This yields that $N$ is a quotient of $\theta_s L_1$ which implies that $\text{dim}\:\text{Ext}^1(L_1,L_2) = 1$.

Now, if $M\simeq L_1 \oplus L_1$, then $M$ is in the socle of $\theta_t L_2/\mathrm{Soc}(\theta_t L_2)$.
Since $\theta_t L_2$ has simple socle $L_2$, this implies $\text{dim}\:\text{Ext}^1(L_1,L_2) \geq 2$, 
a contradiction. Therefore $M$ is indecomposable.

As $M$ is indecomposable, then,  by adjunction, we get
\[
 1 = \text{dim}\: \text{Hom}(M, \theta_s L_1) = \text{dim}\: \text{Hom}(\theta_s M, L_1).
\]
This implies that $\theta_s M$ has simple top isomorphic to $L_1$ since only $L_1$ 
can occur in the top of $M$, in particular, $\theta_s M$ is indecomposable. Note that $\theta_t L_1 = 0$ 
implies that $\theta_t\theta_s L_1 = \theta_t L_2$ and similarly $\theta_s L_2 = 0$ yields $\theta_s\theta_t L_2 = \theta_s M$. Therefore $\theta_s \theta_t \theta_s L_1 \simeq \theta_s M$ is indecomposable. 
However, $\theta_s \theta_t \theta_s = \theta_{sts} \oplus \theta_s$ and thus $\theta_s M$ must have a direct summand
isomorphic to $\theta_s L_1$. As $\theta_s L_1 \neq 0$ and the Jordan H\"{o}lder multiplicities of $\theta_s L_1$ and $\theta_s M$ are different, this cannot be the case. 
We can conclude that the pair of matrices chosen in \eqref{possx2} can be excluded.

The case \eqref{possx3} is excluded similarly to the case \eqref{possx2}. 
Indeed, using the form of $\llbracket \theta_s \rrbracket$,
we get that $\theta_s L_1$ is uniserial of Loewy length three with simple top and socle isomorphic to $L_1$.
As above, this implies $\dim\mathrm{Ext}^1(L_1,L_2)=\dim\mathrm{Ext}^1(L_2,L_1)=1$. Similarly to the
above, this means that the module $M=\mathrm{Rad}(\theta_t L_2)/\mathrm{Soc}(\theta_t L_2)$ has simple
top and hence $\theta_s\theta_t\theta_s L_1$ is indecomposable and not isomorphic to 
$\theta_s L_1$ by comparing dimensions. This is a contradiction which completes the proof.
\end{proof}

The following theorem is a slight variation of \cite[Theorem~18]{MazorchukMiemietz5} and our 
proof is similar to the one in \cite{MazorchukMiemietz5}. 

\begin{theorem}\label{apexTheorem}
Let $\mathbf{M}$ be a simple transitive $2$-representation of a fiat $2$-category $\mathscr{C}$.
Assume that there is a unique maximal $2$-sided cell $\mathcal{J}$ which does not annihilate $\mathbf{M}$. 
Furthermore, assume that $\mathcal{J}$ is strongly regular. Then $\mathbf{M}$ is equivalent to a cell 
$2$-representation.
\end{theorem}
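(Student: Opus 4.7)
The plan is to follow closely the strategy employed in the proof of \cite[Theorem~18]{MazorchukMiemietz5}, reducing the statement to Theorem~\ref{IsoFaithClassification} after passing to the appropriate $\mathcal{J}$-simple quotient of $\mathscr{C}$.

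First, let $\mathscr{K}$ denote the 2-ideal of $\mathscr{C}$ consisting of all 2-morphisms $\alpha$ with $\mathbf{M}(\alpha)=0$, so that $\mathbf{M}$ descends to a faithful 2-representation $\widetilde{\mathbf{M}}$ of the quotient $\widetilde{\mathscr{C}}:=\mathscr{C}/\mathscr{K}$. Since $\mathcal{J}$ is the unique maximal 2-sided cell not annihilated by $\mathbf{M}$, every 2-sided cell strictly above $\mathcal{J}$ in $\leq_J$ annihilates $\mathbf{M}$; in particular $\mathrm{id}_F\in \mathscr{K}$ for every such $F$, which makes $\mathcal{J}$ the unique maximal 2-sided cell of $\widetilde{\mathscr{C}}$. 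I would then verify that $\widetilde{\mathscr{C}}$ is $\mathcal{J}$-simple: given any 2-ideal $\mathscr{K}'$ of $\mathscr{C}$ strictly containing $\mathscr{K}$, the image of $\mathscr{K}'$ in $\mathbf{M}$ generates a non-zero $\mathscr{C}$-stable ideal of $\mathbf{M}$, which by simple transitivity must coincide with $\mathbf{M}$; unwinding this forces $\mathrm{id}_F\in\mathscr{K}'$ for some (and hence, by \cite[Lemma~16(i)]{MazorchukMiemietz2}, for all) $F\in\mathcal{J}$. By the uniqueness assertion of Theorem~\ref{J-simple}, $\widetilde{\mathscr{C}}$ must therefore coincide with the $\mathcal{J}$-simple quotient of $\mathscr{C}$.

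Next, $\widetilde{\mathscr{C}}$ satisfies the hypotheses of Theorem~\ref{IsoFaithClassification}: it is weakly fiat, $\mathcal{J}$ is its unique maximal 2-sided cell (strongly regular by assumption), and it is $\mathcal{J}$-simple by the previous step. The 2-representation $\widetilde{\mathbf{M}}$ is faithful by construction and inherits simple transitivity from $\mathbf{M}$; in particular it is isotypic, since every simple transitive 2-representation is its own unique simple transitive subquotient. Theorem~\ref{IsoFaithClassification} then yields an equivalence $\widetilde{\mathbf{M}}\simeq \mathbf{C}_{\l}^{\boxtimes\mathcal{A}}$ for some left cell $\l\subset\mathcal{J}$ and some finitary $\Bbbk$-linear category $\mathcal{A}$.

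Finally, since $\widetilde{\mathbf{M}}$ is simple transitive while the simple transitive subquotients of the inflation $\mathbf{C}_{\l}^{\boxtimes\mathcal{A}}$ are all equivalent to $\mathbf{C}_{\l}$ and appear with multiplicity governed by the number of isomorphism classes of indecomposables in $\mathcal{A}$, the category $\mathcal{A}$ must be equivalent to the trivial one-object $\Bbbk$-linear category. Hence $\widetilde{\mathbf{M}}\simeq\mathbf{C}_{\l}$ and therefore $\mathbf{M}\simeq\mathbf{C}_{\l}$, as required. The principal technical difficulty lies in the $\mathcal{J}$-simplicity verification of Step~1: one must carefully translate $\mathscr{C}$-stable ideals of $\mathbf{M}$ into enlargements of $\mathscr{K}$ and rule out any proper intermediate 2-ideal, a task for which the strong regularity of $\mathcal{J}$ and simple transitivity of $\mathbf{M}$ both play essential roles.
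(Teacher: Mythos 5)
Your high-level strategy---quotient $\mathscr{C}$ by the kernel $\mathscr{K}$ of $\mathbf{M}$, show the quotient is $\mathcal{J}$-simple, and invoke Theorem~\ref{IsoFaithClassification}---is a genuinely different route from the paper, which instead restricts $\mathbf{M}$ to the $2$-full $2$-subcategory $\mathscr{C}_{\mathcal{J}}$ on the identities and the $1$-morphisms in $\mathcal{J}$, shows the restriction $\mathbf{M}_{\mathcal{J}}$ is again simple transitive, applies \cite[Theorem~18]{MazorchukMiemietz5} to $\mathscr{C}_{\mathcal{J}}$ (all of whose cells are strongly regular), and then lifts the resulting equivalence back to $\mathscr{C}$ via the Duflo involution and \cite[Theorem~11]{MazorchukMiemietz2}. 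However, your Step~1 has a genuine gap that you acknowledge but do not close.

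The problematic sentence is ``the image of $\mathscr{K}'$ in $\mathbf{M}$ generates a non-zero $\mathscr{C}$-stable ideal of $\mathbf{M}$, which by simple transitivity must coincide with $\mathbf{M}$; unwinding this forces $\mathrm{id}_F\in\mathscr{K}'$.'' Simple transitivity does give you that the $\mathscr{C}$-stable ideal $\mathbf{J}$ of $\mathbf{M}$ generated by $\{\mathbf{M}(\beta)_X:\beta\in\mathscr{K}',\,X\}$ is all of $\mathbf{M}$, hence contains $\mathrm{id}_Y$ for every object $Y$. But this is a statement about morphisms in $\mathbf{M}(\clubsuit)$, not about $2$-morphisms in $\mathscr{C}$: the closure defining $\mathbf{J}$ allows pre- and post-composition with \emph{arbitrary} morphisms of $\mathbf{M}(\clubsuit)$, which need not be of the form $\mathbf{M}(\gamma)$ for any $2$-morphism $\gamma$. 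So an identity $\mathrm{id}_Y=\sum_i\phi_i\circ\mathbf{M}(G_i)(\mathbf{M}(\beta_i)_{X_i})\circ\psi_i$ does not translate into a relation putting any $\mathrm{id}_F$ into $\mathscr{K}'$. Indeed, if the unwinding were as automatic as your phrasing suggests, it would establish $\mathcal{J}$-simplicity of $\mathscr{C}/\mathscr{K}$ for \emph{any} simple transitive $\mathbf{M}$ with apex $\mathcal{J}$, with no use of strong regularity---yet you yourself state that strong regularity plays an essential role, which is an internal tension in the write-up. Filling this gap requires an argument along the lines of what the paper actually does for the restricted $\mathbf{M}_{\mathcal{J}}$: starting from a nonzero element of the ideal, produce an invertible direct summand after applying some $1$-morphism (using simple transitivity and the fiat/adjunction structure), then use the apex property of $\mathcal{J}$ to show that an appropriate $1$-morphism in $\mathcal{J}$ preserves invertibility. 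You would also need to remark that $\widetilde{\mathscr{C}}$ is still (weakly) fiat, and that the cell $2$-representation of $\widetilde{\mathscr{C}}$ for $\l$ pulls back to the cell $2$-representation of $\mathscr{C}$ for the corresponding left cell; both are routine but are not addressed. The remaining steps (applying Theorem~\ref{IsoFaithClassification} and eliminating a nontrivial inflation using simple transitivity of $\widetilde{\mathbf{M}}$) are sound.
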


\begin{proof}
Denote by $\mathscr{C}_{\mathcal{J}}$ the $2$-full $2$-subcategory of $\mathscr{C}$ formed by all $1$-morphisms in 
$\mathcal{J}$ and their respective identity $1$-morphisms. If we simply restrict $\mathbf{M}$ to $\mathscr{C}_{\mathcal{J}}$,
we obtain again a $2$-representation, denoted by $\mathbf{M}_{\mathcal{J}}$. Moreover, $\mathbf{M}_{\mathcal{J}}$ is 
a transitive $2$-representation since the additive closure of $1$-morphisms in $\mathcal{J}$ is stable with 
respect to left multiplication by $1$-morphism in $\mathscr{C}$. 

Next, we claim that $\mathbf{M}_{\mathcal{J}}$ is simple transitive. Assume that $\mathbf{J}$ is an 
ideal of $\mathbf{M}$ which is nonzero and stable with respect to the action of $\mathscr{C}_{\mathcal{J}}$.
Let $\alpha:X \to Y$ be a nonzero morphism in $\mathbf{J}$. By simple transitivity of $\mathbf{M}$, 
there exists a $1$-morphism $G$ in $\mathscr{C}$ such that $G(\alpha): G(X) \to G(Y)$ has an invertible 
nonzero direct summand. Now, composition with $1$-morphisms from $\mathscr{C}_{\mathcal{J}}$ sends this 
direct summand to another invertible morphism. Note that transitivity of $\mathbf{M}_{\mathcal{J}}$ implies 
that there exists $F \in \mathscr{C}_{\mathcal{J}}$ which does not annihilate this summand. However, since $F$ 
is in $\mathcal{J}$, so is $F \circ G$ implying that $F \circ G (\alpha)$ is in $\mathbf{J}$. This proves that 
every $\mathscr{C}_{\mathcal{J}}$-stable ideal contains the identity $\text{id}_X$ for some nonzero object 
$X$ and hence the maximal ideal of $\mathbf{M}_{\mathcal{J}}$ which does not contain any identity $\text{id}_Y$  
is zero. Thus $\mathbf{M}_{\mathcal{J}}$ is simple transitive. 

By \cite[Theorem 18]{MazorchukMiemietz5}, there exists a left cell $\mathcal{L}$ in $\mathcal{J}$ such 
that $\mathbf{M}_{\mathcal{J}}$ is equivalent to the cell $2$-representation $\mathbf{C}^{\mathcal{J}}_{\mathcal{L}}$. 
Moreover, \cite[Theorem 43]{MazorchukMiemietz1} yields that any choice of a left cell $\mathcal{L}$ in $\mathcal{J}$ 
gives us the same $2$-representation, up to equivalence. As usual, there exists $\mathtt{i} = \mathtt{i}_{\mathcal{L}}$ 
in $\mathscr{C}$ such that all $1$-morphisms in $\mathcal{L}$ have domain $\mathtt{i}$. Let $L$ be a simple object in 
$\overline{\mathbf{C}^{\mathcal{J}}_{\mathcal{L}}}$ such that $L$ is not annihilated by any $1$-morphism in $\mathcal{L}$, 
i.e. the simple object corresponding to the Duflo involution of $\mathcal{L}$. Then we can consider $L$ as an object in 
$\overline{\mathbf{M}}(\mathtt{i})$.

Let $\Phi$ be the $2$-natural transformation from the principal $2$-representation $\mathbf{P}_{\mathtt{i}}$ of $\mathscr{C}$ 
to $\overline{\mathbf{M}}$ which sends $P_{\mathbbm{1}_{\mathtt{i}}}$ to $L$. Denote by $\mathbf{N}(\mathtt{j})$ the 
additive closure of all $1$-morphisms $F \in \mathscr{C}(\mathtt{i,j})$ such that $F \geq_L \mathcal{L}$ for 
$\mathtt{j} \in \mathscr{C}$. The image of $\mathbf{N}(\mathtt{j})$ under $\Phi$ is inside the category of projective 
objects in $\overline{\mathbf{M}}(\mathtt{j})$ and contains at least one representative of each isomorphism class of 
indecomposable objects due to results from \cite[Subsection 4.5]{MazorchukMiemietz1}. By construction of $L$, we have that 
the maximal ideal $\mathbf{I}$ in $\mathbf{N}$, not containing $\text{id}_F$ for any $F \in \mathcal{L}$, annihilates $L$. 
Thus, the $2$-representation $\mathbf{K} = \mathbf{N}/\mathbf{I}$ on projective objects in the categories $\mathbf{M}(\mathtt{j})$ 
with $\mathtt{j}\in \mathscr{C}$, is equivalent to the cell $2$-representation $\mathbf{C}_{\mathcal{L}}$ of $\mathscr{C}$. 
By \cite[Theorem 11]{MazorchukMiemietz2}, we get that $\mathbf{K}$ is equivalent to $\mathbf{M}$ and hence $\mathbf{M}$ is 
equivalent to $\mathbf{C}_{\mathcal{L}}$. This completes the proof. 
\end{proof}

\section{The $D_4$-case}\label{sd4}
\label{D4Case}

\subsection{The main result}

In this section we study $\mathscr{S}_4$ which is of special interest as $D_4$ is the Weyl group 
of type  $B_2$ and hence there is a connection to Lie theory, namely,  
$\mathscr{S}_4$ is biequivalent to the $2$-category of projective functors on the
principal block of the BGG category $\mathcal{O}$ for a Lie algebra of type $B_2$, see 
\cite[Subsection~7.2]{MazorchukMiemietz1} for details.  More concretely, we want to 
prove the following theorem which is an analogue of \cite[Theorem~18]{MazorchukMiemietz5} 
where the same result was proved for type $A_n$.

\begin{theorem}\label{MainThm}
Any simple transitive $2$-representation $\mathbf{M}$ of $\mathscr{S}_4$ is equivalent to a cell $2$-representation. 
\end{theorem}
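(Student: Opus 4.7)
The plan is to case-split on the \emph{apex} of $\mathbf{M}$, i.e., the unique maximal two-sided cell of $\mathscr{S}_4$ that does not annihilate $\mathbf{M}$. Since $\mathscr{S}_4$ has only three two-sided cells $\mathcal{J}_1 = \{\theta_e\}$, $\mathcal{J}_2$, and $\mathcal{J}_3 = \{\theta_{w_0}\}$, there are three cases to handle. If the apex is $\mathcal{J}_3$, then $\mathcal{J}_3$ is a singleton and strongly regular, so passing to the $\mathcal{J}_3$-simple quotient and applying Theorem~\ref{apexTheorem} yields $\mathbf{M} \simeq \mathbf{C}_{\mathcal{L}_{w_0}}$. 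If the apex is $\mathcal{J}_1$, then both $\theta_s$ and $\theta_t$ annihilate $\mathbf{M}$, and the analysis of Subsection~\ref{trivialCellModules} together with simple transitivity forces $\mathbf{M} \simeq \mathbf{C}_{\mathcal{L}_e}$. The substance of the theorem is therefore the case where the apex is the non-strongly-regular middle cell $\mathcal{J}_2$, and the argument below is occupied with this case.

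Assume from now on that the apex is $\mathcal{J}_2$. The first subgoal is to determine the rank $r$ of $\mathbf{M}$. By Proposition~\ref{transitiveIsStronglyConnected}, the matrix $X := \Lparen \theta_s \oplus \theta_t \Rparen$ is non-negative and irreducible, so Perron-Frobenius applies. On the other hand, since $\theta_{w_0}$ annihilates $\mathbf{M}$, only those simple $\C[D_4]$-modules on which $\underline{w_0}$ vanishes can appear as summands of $[\mathbf{M}(\c)]^{\C}$; these are $V_{-1,-1}$, $V_{1,-1}$, $V_{-1,1}$, and $V^{(4,1)}$, carrying $(\underline{s}+\underline{t})$-eigenvalues $0$, $2$, $2$ and $2 \pm \sqrt{2}$, respectively. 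The Perron-Frobenius constraint (the largest-modulus eigenvalue of $X$ is simple) forces $V^{(4,1)}$ to appear with multiplicity at most one, and limits the combined multiplicity of $V_{1,-1}$ and $V_{-1,1}$ when $V^{(4,1)}$ is absent. Combining these observations with Theorem~\ref{noSimple} (which rules out $V^{(4,1)}$ occurring alone) and the trace identity $\mathrm{tr}(X) = 2r$ extracted from Corollary~\ref{Cor5}, together with a further use of Lemmas~\ref{L3} and \ref{L4} and the relation $\theta_s \theta_t \theta_s = \theta_{sts} \oplus \theta_s$ from Proposition~\ref{leftMult} (to exclude $V_{-1,-1}$ summands), I expect the only possibilities to collapse to $r = 3$ with $[\mathbf{M}(\c)]^{\C} \simeq V^{(4,1)} \oplus V_{1,-1}$ or $V^{(4,1)} \oplus V_{-1,1}$, matching exactly $[\mathbf{C}_{\mathcal{L}_s}]^{\C}$ and $[\mathbf{C}_{\mathcal{L}_t}]^{\C}$ from Subsection~\ref{DecatS4}.

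Once the rank is pinned down to $3$ and the matrices $\Lparen\theta_w\Rparen$ are forced (up to swapping the roles of $s$ and $t$) to coincide with those of a cell $2$-representation, the final step is to promote this combinatorial match to a genuine equivalence. The strategy is the standard one: identify a simple object $L \in \overline{\mathbf{M}}(\c)$ playing the role of the simple corresponding to the Duflo involution of the relevant left cell, consider the $2$-natural transformation $\Phi \colon \mathbf{P}_{\c} \to \overline{\mathbf{M}}$ sending $\mathbbm{1}_{\c}$ to $L$, verify that it factors through the appropriate cell $2$-representation, and then invoke \cite[Theorem~11]{MazorchukMiemietz2} to upgrade the induced comparison on additive closures of projectives into an equivalence of finitary $2$-representations. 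The main obstacle is precisely that $\mathcal{J}_2$ is not strongly regular, so Theorem~\ref{apexTheorem} cannot be invoked as a black box; one must choose by hand whether to aim for $\mathbf{C}_{\mathcal{L}_s}$ or $\mathbf{C}_{\mathcal{L}_t}$, based on the ordering dictated by Corollary~\ref{Cor5}, and check directly that all the relations in $\mathscr{S}_4(\c,\c)$ (most critically the decomposition of $\theta_s\theta_t\theta_s$ and the vanishing of $\theta_{w_0}$ on $\mathbf{M}(\c)$) are respected by the chosen $\Phi$. Proposition~\ref{LeftCellsNotEquivalent} guarantees that these two candidate targets are genuinely inequivalent, so the resulting classification consistently splits into the two expected equivalence classes.
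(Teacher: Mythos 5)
Your overall strategy matches the paper's: case-split on the apex, dispose of the singleton-apex cases, and concentrate the work on $\mathcal{J}_2$, where you correctly note that strong regularity fails and so Theorem~\ref{apexTheorem} cannot be used as a black box. But there are two genuine gaps in the hard case. First, your argument for rank three is not a proof. The constraints you cite --- the trace identity $\operatorname{tr}(Q)=2r$ from Corollary~\ref{Cor5}, multiplicity one for $V^{(4,1)}$ via Perron--Frobenius, Theorem~\ref{noSimple}, Lemmas~\ref{L3}--\ref{L4} --- are all simultaneously satisfied by $[\mathbf{M}(\c)]^{\C}\simeq V^{(4,1)}\oplus V_{1,-1}^{\oplus a}\oplus V_{-1,1}^{\oplus b}$ for every $a,b\geq 0$ with $a+b\geq 1$, giving rank $r=2+a+b$; even the determinant constraint $\det Q=2^{r-1}$ from Corollary~\ref{Cor8+9} holds for all such $a,b$. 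Nothing you invoke bounds $a+b$. What actually pins down $r=3$ is the determinant computation of Proposition~\ref{prop11}, which exploits the rank-one structure of the off-diagonal blocks $B,B'$ established in Corollary~\ref{Cor10} to extract the Diophantine identity $\bigl(\sum_j\lambda_j w_j\bigr)\bigl(\sum_i\mu_i v_i\bigr)=2$, forcing $k=2$, $l=1$. That calculation is the crux of the rank determination and is entirely absent from your plan; ``I expect the only possibilities to collapse to $r=3$'' is precisely the step that needs justification.

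Second, the final step --- send $\mathbbm{1}_{\c}\mapsto L$, ``verify that $\Phi$ factors through the appropriate cell $2$-representation,'' and upgrade via \cite[Theorem~11]{MazorchukMiemietz2} --- hides the hardest technical input. To carry it out one must first know that $\theta_s\,L_1$, $\theta_t\theta_s\,L_1$ and $\theta_s\theta_t\theta_s\,L_1$ are projective objects in $\overline{\mathbf{M}}(\c)$; this is Lemma~\ref{projective}, proved by a minimal-projective-resolution argument whose decisive move is that simple transitivity forces a certain $\mathscr{S}_4$-stable ideal of morphisms to vanish. Without this you cannot identify those three modules with $P_1$, $P_3$, $P_2$, and hence cannot compare the Cartan matrix of $\overline{\mathbf{M}}_{\mathrm{pr}}$ with that of $\mathbf{C}_{\mathcal{L}_s}$, which is what actually makes $\Phi$ an equivalence. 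Your appeal to \cite[Theorem~11]{MazorchukMiemietz2} is in the spirit of the proof of Theorem~\ref{apexTheorem}, but that proof crucially relies on the strong regularity you already flagged as unavailable here; the substitute is precisely this projectivity lemma, and it should not be treated as routine bookkeeping.
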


The proof of this is subdivided into several steps. First, we study the possible eigenvalues of 
the matrix $Q := \Lparen\theta_s \oplus \theta_t\Rparen$. Using this we describe 
the decategorifications of $\mathbf{M}$ and show that it is isomorphic to 
the decategorification of a cell $2$-representation. Finally, we explicitly construct 
an equivalence between $\mathbf{M}$ and the corresponding cell $2$-representation.

\subsection{Possible eigenvalues of $Q$}

\begin{lemma}\label{charPoly}
Let $\mathbf{M}$ be a $2$-representation of $\mathscr{S}_4$ such that $\theta_{w_0}\:\mathbf{M}(\c) = 0$. 
Then the polynomial $p(x) =  x^4 - 6x^3 + 10x^2-4x$ annihilates $Q$.
\end{lemma}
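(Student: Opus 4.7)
\medskip

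The plan is to work directly in the group algebra $\Z[D_4]$ via the Soergel isomorphism of Theorem~\ref{Soergel}, and to find an explicit polynomial expression in $\underline{s}+\underline{t}$ that lies in $\Z\cdot\underline{w_0}$. Since $[\theta_s\oplus\theta_t]=\underline{s}+\underline{t}$, the matrix $Q$ acts on $[\mathbf{M}(\c)]^{\C}$ as $\underline{s}+\underline{t}$; hence, if a polynomial $p(x)$ satisfies $p(\underline{s}+\underline{t})\in\Z\cdot\underline{w_0}$ in $\Z[D_4]$, then the assumption $\theta_{w_0}\mathbf{M}(\c)=0$, together with Theorem~\ref{Soergel}, forces $p(Q)=0$.

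First I would compute the powers $(\underline{s}+\underline{t})^k$ for $k=1,2,3,4$, using only Proposition~\ref{leftMult} iteratively. The computation is entirely mechanical: for each term of the form $\underline{s}\cdot\underline{w}$ or $\underline{t}\cdot\underline{w}$, one of the four cases of Proposition~\ref{leftMult} applies, and there are no reduced expressions longer than $l(w_0)=4$ to deal with. The $s\leftrightarrow t$ symmetry of $(\underline{s}+\underline{t})^k$ halves the work. The outcomes are expressions of the shape
\begin{equation*}
(\underline{s}+\underline{t})^k \;=\; \alpha_k(\underline{s}+\underline{t}) + \beta_k(\underline{st}+\underline{ts}) + \gamma_k(\underline{sts}+\underline{tst}) + \delta_k\,\underline{w_0},
\end{equation*}
with $\alpha_k,\beta_k,\gamma_k,\delta_k\in\Z_{\geq 0}$, and in particular $\delta_1=\delta_2=\delta_3=0$, while $\delta_4>0$ (this is forced by length considerations: $\underline{w_0}$ first appears in the expansion of $(\underline{s}+\underline{t})^4$, since any product $\underline{s_{i_1}}\cdots\underline{s_{i_k}}$ with $k<4$ decomposes into KL basis elements of length at most $k$, and $l(w_0)=4$).

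Next, having tabulated the coefficients, I would solve the linear system
\begin{equation*}
a(\underline{s}+\underline{t}) + b(\underline{s}+\underline{t})^2 + c(\underline{s}+\underline{t})^3 + (\underline{s}+\underline{t})^4 \;\equiv\; 0 \pmod{\Z\cdot\underline{w_0}},
\end{equation*}
i.e.\ require that the coefficients of $\underline{s}+\underline{t}$, $\underline{st}+\underline{ts}$, and $\underline{sts}+\underline{tst}$ on the left all vanish. This is three linear equations in the three unknowns $a,b,c$, and the claim of the lemma is precisely that this system is solved by $(a,b,c)=(-4,10,-6)$. I would verify this by plugging in the coefficients computed in the previous step and checking all three cancellations, then record the resulting multiple of $\underline{w_0}$ (which one expects to equal $2\underline{w_0}$, the coefficient $\delta_4$ from the expansion).

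Finally, I would assemble the conclusion: applying Theorem~\ref{Soergel}, the identity
\begin{equation*}
p(\underline{s}+\underline{t}) \;=\; (\text{integer})\cdot\underline{w_0}
\end{equation*}
translates to $p([\theta_s\oplus\theta_t])=(\text{integer})\cdot[\theta_{w_0}]$ in $[\mathscr{S}_4(\c,\c)]$, and the hypothesis $\theta_{w_0}\mathbf{M}(\c)=0$ yields $p(Q)=0$ on the Grothendieck group, hence as an integer matrix equation. There is no real obstacle here; the only risk is an arithmetic slip in the iterated expansion of $(\underline{s}+\underline{t})^k$, so I would present the computation by tabulating each product $\underline{s}\cdot\underline{w}$ and $\underline{t}\cdot\underline{w}$ separately according to the case of Proposition~\ref{leftMult} that applies, which keeps the bookkeeping transparent.
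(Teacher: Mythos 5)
Your proposal is correct, and it takes a genuinely different route from the paper. The paper works over $\C$ and uses the semisimplicity of $\C[D_4]$: it observes that $[\mathbf{M}(\c)]^{\C}$ decomposes into simple $D_4$-modules, that the only simples annihilated by $\underline{w_0}$ are $V_{-1,-1}$, $V_{-1,1}$, $V_{1,-1}$, $V^{(4,1)}$, and that the characteristic polynomials of $\underline{s}+\underline{t}$ on each of these (namely $x$, $x-2$, $x-2$, $x^2-4x+2$) all divide $p(x)$. Your approach instead works integrally in $\Z[D_4]$ via Proposition~\ref{leftMult}, expanding $(\underline{s}+\underline{t})^k$ step by step. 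I checked the bookkeeping: one finds $a^2 = 2(\underline{s}+\underline{t}) + (\underline{st}+\underline{ts})$, $a^3 = 5(\underline{s}+\underline{t}) + 4(\underline{st}+\underline{ts}) + (\underline{sts}+\underline{tst})$, and $a^4 = 14(\underline{s}+\underline{t}) + 14(\underline{st}+\underline{ts}) + 6(\underline{sts}+\underline{tst}) + 2\underline{w_0}$, so indeed $p(a) = 2\underline{w_0}$. Your method yields a strictly finer, characteristic-free identity in $\Z[D_4]$ and avoids any appeal to character theory; the trade-off is that the computation is specific to $n=4$ and grows quadratically in $n$ for a general dihedral group, whereas the paper's character-theoretic argument transfers more cleanly if one instead wants the minimal polynomial on the quotient by $\underline{w_0}$ for general $n$. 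Both are complete proofs; one small point worth making explicit in a write-up is that the action $[\mathscr{S}_4(\c,\c)] \to \mathrm{End}([\mathbf{M}(\c)])$ is a ring homomorphism, so $p(\underline{s}+\underline{t}) = 2\underline{w_0}$ transports directly to $p(Q) = 2\Lparen\theta_{w_0}\Rparen = 0$.
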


\begin{proof}
By Theorem~\ref{Soergel}, we know that $Q$ 
acts on $[\mathscr{S}]^{\C}$ as the element $a := e + s + e + t$ acts on $\C[D_n]$. 
Thus $Q$ is annihilated by the product of all characteristic polynomials 
$\chi_{n,k}$ given in $(\ref{charPolyGroup})$. A simple calculation shows that 
\begin{displaymath}
x(x-2)(x^2-4x+2-\text{cos}(\pi/2))  = x(x-2)(x^2-4x+2) 
= x^4-6x^3+10x^2-4x. 
\end{displaymath}
It remains to note that the only simple $D_4$-modules annihilated by $\underline{w_0}$
and the corresponding characteristic polynomials
for the element $a$ are:
\begin{displaymath}
\begin{array}{c||c|c|c|c}
\text{module}:& V_{-1,-1}&V_{-1,1}&V_{1,-1}&V^{(4,1)} \\
\hline
\chi_a:& x& x-2& x-2&x^2-4x+2-\text{cos}(\pi/2).
\end{array}
\end{displaymath}
As all these polynomials divide $x^4-6x^3+10x^2-4x$, the claim follows.
\end{proof}

\begin{corollary}
Let $\mathbf{M}$ be a $2$-representation of $\mathscr{S}_4$ such that $\theta_{w_0}\mathbf{M}(\c) = 0$. 
Then the possible eigenvalues of $Q$ on $[\mathbf{M}]^{\C}$ are $0, 2-\sqrt{2}, 2, 2 + \sqrt{2}$.
\end{corollary}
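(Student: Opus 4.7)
The plan is to deduce this directly from Lemma~\ref{charPoly} by an explicit factorisation of the polynomial $p(x)$. Since Lemma~\ref{charPoly} shows that $p(Q) = 0$, the minimal polynomial of $Q$ divides $p(x)$, and hence every eigenvalue of $Q$ is a root of $p(x)$.

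The main step is therefore to factor $p(x) = x^4 - 6x^3 + 10x^2 - 4x$ completely. I would first pull out the obvious factor $x$, leaving the cubic $x^3 - 6x^2 + 10x - 4$. Noting that $x = 2$ is a root of this cubic (which is already implicit in the derivation of $p$, where the factor $(x-2)$ corresponding to $V_{-1,1}$ and $V_{1,-1}$ appeared), I would divide by $x - 2$ to obtain the quadratic $x^2 - 4x + 2$. Solving this via the quadratic formula gives roots $2 \pm \sqrt{2}$, yielding the factorisation
\begin{equation*}
p(x) = x(x-2)\bigl(x - (2+\sqrt{2})\bigr)\bigl(x - (2-\sqrt{2})\bigr),
\end{equation*}
as is in fact visible already in the proof of Lemma~\ref{charPoly}.

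There is no real obstacle here: the content is purely the observation that the four roots of $p(x)$ are exactly $0$, $2 - \sqrt{2}$, $2$, and $2+\sqrt{2}$, and that any eigenvalue of $Q$ (acting on the finite-dimensional complex vector space $[\mathbf{M}]^{\C}$) must be a root of any annihilating polynomial. The corollary is then immediate.
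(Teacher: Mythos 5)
Your proof is correct and is exactly the intended argument: the corollary follows immediately from Lemma~\ref{charPoly} by observing that $p(x)=x(x-2)(x^2-4x+2)$ has roots $0$, $2$, $2\pm\sqrt{2}$, and any eigenvalue of $Q$ must be a root of an annihilating polynomial. The paper leaves the corollary unproved precisely because this factorisation is already displayed in the proof of Lemma~\ref{charPoly}.
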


\subsection{The structure of $Q$}\label{sub62}
We can now refine the results obtained in Section \ref{sStructofMat} for the case of $\mathscr{S}_4$. 
For this we will use the same notation as in that section which we are going to recall here.

From now on let $\mathbf{M}$ be a simple transitive $2$-representation of $\mathscr{S}_4$. 
As we have seen in Section \ref{sStructofMat} we only need to consider the case where 
$\theta_{w_0}\:\mathbf{M}(\c) = 0$ and $(\theta_s \oplus \theta_t)\:\mathbf{M}(\c) \neq 0$.

Let $X_1, \dots, X_n$ be a complete and irredundant list of pairwise non-isomorphic 
indecomposable objects in $\mathbf{M}(\clubsuit)$. We will in some of the proofs consider 
the abelianization $\overline{\mathbf{M}}$ of $\mathbf{M}$ and then denote the   
indecomposable projective object $0 \to X_i$ in $\overline{\mathbf{M}}(\c)$ by $P_i$. 
Moreover, for $i=1,2,\dots,n$, we denote by $L_i\in \overline{\mathbf{M}}(\c)$ the simple top of $P_i$.

\begin{lemma}\label{L6}
The $[\mathscr{S}_4(\c,\c)]^{\C}$-module $V := [\mathbf{M}]^{\C}$ has the following properties:
\begin{enumerate}[$($a$)$]
\item\label{L6.1} The only possible simple subquotients of $V$ are $V^{(4,1)}$, $V_{-1,1}$ and $V_{1,-1}$.
\item\label{L6.2} The subquotient $V^{(4,1)}$ has multiplicity one in $V$.
\end{enumerate}
\end{lemma}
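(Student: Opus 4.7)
The plan is the following. Since $\mathbb{C}[D_4]$ is semisimple, subquotients of $V$ coincide with summands and we can write
\[
V \cong (V^{(4,1)})^{a} \oplus V_{1,1}^{b} \oplus V_{-1,-1}^{c} \oplus V_{1,-1}^{d} \oplus V_{-1,1}^{e}
\]
for some non-negative integers $a,b,c,d,e$. The task for part (a) is to show $b=c=0$, and for part (b) to show $a=1$. The easy step is $b=0$: the assumption $\theta_{w_0}\mathbf{M}(\c)=0$ forces $\underline{w_0}=\sum_{w\in D_4}w$ to act as zero on $V$, whereas on $V_{1,1}$ it acts by the non-zero scalar $8$.

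To rule out $V_{-1,-1}$, the plan is to compare two expressions for the trace of $Q=\Lparen\theta_s\oplus\theta_t\Rparen$. On the one hand, Corollary~\ref{Cor5} provides an ordering of the indecomposables in which every diagonal entry of $Q$ equals $2$, giving $\operatorname{tr}(Q)=2r$ where $r=2a+c+d+e$ is the rank. On the other hand, reading off the matrices of $\underline{s}$ and $\underline{t}$ from Subsection~\ref{dnModules}, the trace of $\underline{s}+\underline{t}$ is $4$ on $V^{(4,1)}$, $0$ on $V_{-1,-1}$, and $2$ on each of $V_{1,-1}$, $V_{-1,1}$, so $\operatorname{tr}(Q)=4a+2(d+e)$. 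Equating the two gives $c=0$, completing part (a).

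For part (b) the idea is to invoke Perron-Frobenius. By Proposition~\ref{transitiveIsStronglyConnected}, the matrix $Q$ is non-negative and irreducible; hence Theorem~\ref{PerronFrobenius} supplies a simple positive Perron eigenvalue that strictly dominates all others in modulus. The corollary to Lemma~\ref{charPoly} restricts the eigenvalues of $Q$ to $\{0,2-\sqrt{2},2,2+\sqrt{2}\}$. If $a\geq 1$, then $2+\sqrt{2}$ is an eigenvalue, and being the maximum of the admissible values it must be the Perron root, forcing $a=1$ by simplicity. The main obstacle is excluding the case $a=0$: there $V\cong V_{1,-1}^{d}\oplus V_{-1,1}^{e}$ and the only non-zero eigenvalue of $Q$ is $2$, which by Perron-Frobenius occurs with multiplicity one; thus $d+e=1$ and $V$ is a simple $\mathbb{C}[D_4]$-module isomorphic to either $V_{1,-1}$ or $V_{-1,1}$. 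But Theorem~\ref{noSimple} permits a simple $V$ only if $V\cong V_{1,1}$ or $V\cong V_{-1,-1}$, a contradiction. Hence $a=1$.
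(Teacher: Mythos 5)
Your proposal is correct and follows essentially the same strategy as the paper's proof: $V_{1,1}$ is ruled out via $\theta_{w_0}$, $V_{-1,-1}$ is ruled out via the trace comparison with Corollary~\ref{Cor5}, and the multiplicity of $V^{(4,1)}$ is controlled by Perron--Frobenius and Theorem~\ref{noSimple}. The only cosmetic difference is in excluding the case $a=0$: the paper notes that $Q$ is then a scalar multiple of the identity and uses transitivity to force $n=1$, whereas you invoke the multiplicity-one conclusion of Perron--Frobenius directly; these are two phrasings of the same consequence of irreducibility.
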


\begin{proof}
The trivial  $D_4$-module $V_{1,1}$ cannot appear as a subquotient of $V$ since $\underline{w_0}$
does not annihilate $V_{1,1}$, while $\theta_{w_0}$ annihilates $\mathbf{M}(\c)$ by our assumptions.

We have $\dim(V)=n$ by construction. By Corollary~\ref{Cor5}, the trace of $Q$ on
$V$ is $2n$. At the same time, the trace of $Q$ on non-trivial simple $D_4$-modules
is given by:
\begin{displaymath}
\begin{array}{c||c|c|c|c}
\text{module}:&V_{-1,-1}&V_{-1,1}&V_{1,-1}&V^{(4,1)}\\
\hline
\text{dimension}:&1&1&1&2\\
\hline
\text{trace}:&0&2&2&4
\end{array}
\end{displaymath}
Since for the three latest modules the trace equals twice the dimension while for $V_{-1,-1}$
this is not the case, the module $V_{-1,-1}$ cannot appear as a direct summand of $V$. This proves
claim~\eqref{L6.1}.

From claim~\eqref{L6.1} it follows that the only possible eigenvalues for  
$Q$ on $V$ are $2-\sqrt{2}$, $2$ and $2+\sqrt{2}$. If $V^{(4,1)}$
is not a direct summand of $V$, then $Q$ is a diagonal matrix.
As $\mathbf{M}$ is transitive, some power of this matrix must have only positive entries.
Therefore in this case $n=1$ and thus $\mathbf{M}$ decategorifies to either
$V_{-1,1}$ or $V_{1,-1}$ by the previous paragraph. This, however, contradicts Theorem~\ref{noSimple}
which shows that $V^{(4,1)}$ appears in $V$ with nonzero multiplicity.

At the same time, the fact 
that $V^{(4,1)}$ appears in $V$ with nonzero multiplicity 
implies that the maximal eigenvalue of $Q$ on $V$ is $2+\sqrt{2}$.
As $Q$ is non-negative and irreducible 
(since $\mathbf{M}$ is transitive), this eigenvalue must have multiplicity one by
Theorem~\ref{PerronFrobenius}. This proves claim~\eqref{L6.2} and completes the proof.
\end{proof}

\begin{corollary}\label{Cor8+9}
Recall that $n$ denotes the rank of $\mathbf{M}$. We have:
\begin{enumerate}[$($a$)$]
\item\label{Cor8+9.1} $\text{\emph{det}}(Q) = 2^{n-1}$;
\item\label{Cor8+9.2} $\text{\emph{rank}}(Q - 2I_n) = 2$.
\end{enumerate}
\end{corollary}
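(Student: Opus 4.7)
The idea is to use Lemma~\ref{L6} together with semisimplicity of $\mathbb{C}[D_4]$ to pin down the decomposition of $V:=[\mathbf{M}]^{\mathbb{C}}$ exactly, after which both statements follow from a direct eigenvalue computation.

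First I would observe that, via Soergel's isomorphism (Theorem~\ref{Soergel}), $V$ is a $\mathbb{C}[D_4]$-module, and Maschke's theorem guarantees that $V$ is semisimple. By Lemma~\ref{L6}\eqref{L6.1}, the only possible simple summands are $V^{(4,1)}$, $V_{-1,1}$ and $V_{1,-1}$, and by Lemma~\ref{L6}\eqref{L6.2} the module $V^{(4,1)}$ appears with multiplicity exactly one. Hence one can write
\begin{equation*}
V\;\cong\; V^{(4,1)}\;\oplus\; V_{-1,1}^{\oplus a}\;\oplus\; V_{1,-1}^{\oplus b}
\end{equation*}
for some non-negative integers $a,b$ with $a+b=n-2$.

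Next I would compute the action of $Q=\Lparen\theta_s\oplus\theta_t\Rparen=[\theta_s]+[\theta_t]$ on each simple summand. On $V^{(4,1)}$ the matrices of $\underline{s}$ and $\underline{t}$ given in Subsection~\ref{dnModules} yield $\mathrm{tr}(Q)=4$ and $\det(Q)=2$, so the eigenvalues of $Q$ on $V^{(4,1)}$ are $2\pm\sqrt{2}$. On $V_{-1,1}$ the element $s$ acts as $-1$, so $\underline{s}=e+s$ acts as $0$, while $\underline{t}=e+t$ acts as $2$; hence $Q$ acts as the scalar $2$. The same calculation gives that $Q$ acts as $2$ on $V_{1,-1}$. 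Consequently $Q$ is diagonalizable on $V$ with eigenvalues $2+\sqrt{2}$ and $2-\sqrt{2}$ (each with multiplicity one) and the eigenvalue $2$ with multiplicity $n-2$.

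Both statements now fall out. For \eqref{Cor8+9.1}, multiplying all eigenvalues gives
\begin{equation*}
\det(Q)=(2+\sqrt{2})(2-\sqrt{2})\cdot 2^{n-2}=2\cdot 2^{n-2}=2^{n-1}.
\end{equation*}
For \eqref{Cor8+9.2}, diagonalizability forces $\dim\ker(Q-2I_n)=n-2$, so $\mathrm{rank}(Q-2I_n)=n-(n-2)=2$. I do not foresee any serious obstacle; the only point that must be flagged is that one needs $Q$ to be genuinely diagonalizable (not merely to have the right characteristic polynomial) for part \eqref{Cor8+9.2}, and this is exactly where semisimplicity of $\mathbb{C}[D_4]$ and Lemma~\ref{L6} together do the work.
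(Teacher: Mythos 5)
Your proof is correct and follows essentially the same route as the paper's: both deduce from Lemma~\ref{L6} that the eigenvalues of $Q$ on $V$ are $2\pm\sqrt{2}$ (each with multiplicity one) and $2$ (with multiplicity $n-2$), and then read off the determinant and the rank of $Q-2I_n$. The paper's proof is a one-liner citing Lemma~\ref{L6}; you have just filled in the small eigenvalue calculations and made explicit the diagonalizability point (via semisimplicity of $\mathbb{C}[D_4]$) that the paper leaves implicit.
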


\begin{proof}
This follows immediately from Lemma~\ref{L6}
since there we established that $2+\sqrt{2}$ and $2 - \sqrt{2}$ are eigenvalues with multiplicity $1$ and 
thus $2$ is an eigenvalue with multiplicity $n-2$.
\end{proof}

\begin{corollary}\label{Cor10}
There exists an ordering of $X_1, \dots, X_n$ such that 
\begin{equation*}
Q = \left(
\begin{array}{c|c}
 2I_k & B \\ \hline
 B' & 2I_{n-k}
\end{array}
\right)
\end{equation*}
where $B$ and $B'$ are positive matrices of rank $1$. 
\end{corollary}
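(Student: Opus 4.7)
The starting point is the block form
\begin{equation*}
Q = \begin{pmatrix} 2I_k & B \\ B' & 2I_{n-k} \end{pmatrix}
\end{equation*}
already supplied by Corollary~\ref{Cor5}, with $B$ of size $k\times(n-k)$ and $B'$ of size $(n-k)\times k$, both non-negative. The two outstanding claims, that each of $B$, $B'$ has rank one and is entry-wise positive, will follow by combining the rank data from Corollary~\ref{Cor8+9}\eqref{Cor8+9.2} with strong connectedness of the action graph $G_Q$ coming from Proposition~\ref{transitiveIsStronglyConnected}.

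For the rank statement I would observe that
\begin{equation*}
Q - 2I_n = \begin{pmatrix} 0 & B \\ B' & 0 \end{pmatrix},
\end{equation*}
whose nonzero rows from the two blocks lie in complementary coordinate subspaces, so $\operatorname{rank}(Q-2I_n) = \operatorname{rank}(B)+\operatorname{rank}(B')$, which equals $2$ by Corollary~\ref{Cor8+9}\eqref{Cor8+9.2}. I would then rule out $B=0$ and $B'=0$ via transitivity: if $B'=0$, then $\theta_t\,X_i=0$ for $i\leq k$ while $\theta_s$ preserves $\operatorname{add}(X_1,\dots,X_k)$ by the shape of $\Lparen\theta_s\Rparen$ from Lemma~\ref{L3}, producing a proper $\mathscr{S}_4$-stable additive subcategory and contradicting transitivity; the case $B=0$ is symmetric. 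Hence $\operatorname{rank}(B)=\operatorname{rank}(B')=1$.

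For positivity, I would write the two non-negative rank-one matrices as $B=uv^t$ and $B'=u'(v')^t$ for non-negative vectors $u,v,u',v'$ and show that none of the coordinates of these four vectors vanishes. If, say, $u_i=0$, then the $i$-th row of $B$ is zero, so in $G_Q$ there is no edge from any vertex in $\{k+1,\dots,n\}$ into vertex $i$; since the upper-left block $2I_k$ only contributes a self-loop at $i$, the vertex $i$ has no incoming edge from any vertex distinct from itself, contradicting the strong connectedness guaranteed by Proposition~\ref{transitiveIsStronglyConnected}. The analogous argument, checking alternately zero columns of $B$ and zero rows or columns of $B'$ against the absence of non-self-loop edges inside the two diagonal blocks, rules out zero entries in $v$, $u'$ and $v'$. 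Consequently $B$ and $B'$ are positive.

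The main thing that requires care is the dictionary between vanishing rows or columns of the off-diagonal blocks and missing incoming or outgoing edges at the corresponding vertices of $G_Q$; once this is set up correctly the conclusion follows directly, and no deeper obstacle is expected, since all the arithmetic and spectral work is already packaged in Lemma~\ref{L6} and Corollary~\ref{Cor8+9}.
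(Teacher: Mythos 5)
Your proposal is correct and follows essentially the same route as the paper: start from the block form in Corollary~\ref{Cor5}, use $\operatorname{rank}(Q-2I_n)=2$ from Corollary~\ref{Cor8+9} together with the observation that the rank splits across the two off-diagonal blocks, rule out a vanishing block by transitivity, and then translate a zero row or column of a rank-one non-negative block into a vertex of $G_Q$ lacking non-loop incoming or outgoing edges, contradicting strong connectedness. The only cosmetic difference is that you phrase the positivity step via the explicit factorization $B=uv^t$, which is actually a slightly cleaner way to state what the paper expresses less precisely as ``both the column and the row of this entry must be zero.''
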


\begin{proof}
From Corollary \ref{Cor5} we get that there exists an ordering of $X_1, \dots, X_n$ and non-negative
matrices $B, B'$ such that $Q$ has the form as described above. What is left to prove is
that $B$ and $B'$ are positive and have rank $1$.

By Corollary~\ref{Cor8+9}, the rank of $Q - 2I_n$ is two. 
Thus the rank of 
\begin{equation*}
Q - 2I_n = \left(
\begin{array}{c|c}
 0 & B \\ \hline
 B' & 0
\end{array}
\right)
\end{equation*}
is two. This leaves us with three options, either the rank of both $B$ and $B'$ is one, 
as we claimed, or the rank of one of them is two and of the other one is zero. 
If we assume that $B$ has rank $0$, then no power of $Q$
can be a positive matrix, which contradicts transitivity of $\mathbf{M}$.
Similarly, $B'$ must be non-zero. Therefore both $B$ and $B'$ have rank one.

If $B$ would have a zero entry, both the column and the row of this entry must
be zero since $B$ has rank one. Assume that this zero column of $B$ is in
the $i$-th column of the matrix $Q$. 
But this means that $G_{Q}$ is not strongly 
connected since there is no directed path from any vertex to $i$.
This again contradicts our assumption that $\mathbf{M}$ is transitive and completes the proof.
\end{proof}

\subsection{$\mathbf{M}$ has rank three}

From Lemma \ref{L6} and Corollary \ref{Cor8+9} we get that $\mathbf{M}$ has rank at least $3$. 
Our goal in this subsection is to prove that $\mathbf{M}$ has rank $3$.

\begin{proposition} \label{prop11}
Let $\mathbf{M}$ be a simple transitive $2$-representation annihilated by $\theta_{w_0}$
and not annihilated by $\theta_s\oplus\theta_t$. Then $\mathbf{M}$ has rank three and 
there exists an ordering of $X_1, X_2, X_3$ such that 
 \begin{equation*}
 Q = \Lparen\theta_s \oplus \theta_t\Rparen = 
 \begin{pmatrix}
  2 & 0 & 1 \\
  0 & 2 & 1 \\
  1 & 1 & 2
 \end{pmatrix}.
\end{equation*}
\end{proposition}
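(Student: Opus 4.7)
The plan is to leverage the Kazhdan--Lusztig identity $\underline{st}\cdot\underline{st}=2\,\underline{st}+\underline{w_0}$ in $\mathbb{Z}[D_4]$: combined with the hypothesis that $\theta_{w_0}$ annihilates $\mathbf{M}(\c)$, it collapses to a clean quadratic equation on $\Lparen\theta_s\theta_t\Rparen$. Put against the block decomposition of $\Lparen\theta_s\Rparen$ and $\Lparen\theta_t\Rparen$ from Corollary~\ref{Cor10}, this equation will bound $n$ from above; the matching lower bound $n\geq 3$ will come from Lemma~\ref{L6}(b) and Theorem~\ref{noSimple}. Concretely, Corollary~\ref{Cor10} supplies an ordering $X_1,\dots,X_n$ in which
\begin{equation*}
\Lparen\theta_s\Rparen=\begin{pmatrix}2I_k & B\\ 0 & 0\end{pmatrix},\qquad
\Lparen\theta_t\Rparen=\begin{pmatrix}0 & 0\\ B' & 2I_{n-k}\end{pmatrix},
\end{equation*}
with $B$ and $B'$ positive integer matrices of rank one. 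Using the rank-one property, write $B=uv^T$ and $B'=u'(v')^T$ with strictly positive real vectors $u\in\mathbb{R}^k$, $v\in\mathbb{R}^{n-k}$, $u'\in\mathbb{R}^{n-k}$, $v'\in\mathbb{R}^k$.

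Applying Proposition~\ref{leftMult} twice yields $\underline{st}\cdot\underline{st}=2\,\underline{st}+\underline{w_0}$ in $\mathbb{Z}[D_4]$, hence $\theta_s\theta_t\theta_s\theta_t\cong \theta_{st}^{\oplus 2}\oplus\theta_{w_0}$ in $\mathscr{S}_4$ by Theorem~\ref{Soergel}. Since $\theta_{w_0}$ annihilates $\mathbf{M}(\c)$, this descends to the matrix identity $\Lparen\theta_s\theta_t\Rparen^2=2\,\Lparen\theta_s\theta_t\Rparen$. Multiplying out the block forms gives
\begin{equation*}
\Lparen\theta_s\theta_t\Rparen=\begin{pmatrix}BB' & 2B\\ 0 & 0\end{pmatrix},
\end{equation*}
and the identity above reduces to $(BB')^2=2\,BB'$. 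Substituting $BB'=(v^Tu')\,u(v')^T$ gives $(BB')^2=(v^Tu')(v'^Tu)\cdot BB'$, so the relation is equivalent to the scalar equation $(v^Tu')(v'^Tu)=2$. Expanding,
\begin{equation*}
(v^Tu')(v'^Tu)=\sum_{i=1}^{k}\sum_{j=1}^{n-k}u_iv_ju'_jv'_i=\sum_{i,j}B_{ij}B'_{ji}=\mathrm{tr}(BB').
\end{equation*}

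Because every $B_{ij}$ and $B'_{ji}$ is a positive integer, each summand is at least $1$, so $\mathrm{tr}(BB')\geq k(n-k)$; combined with $\mathrm{tr}(BB')=2$ this forces $k(n-k)\leq 2$ and hence $n\leq 3$. Conversely, $n\geq 3$: Lemma~\ref{L6}(b) ensures that $V^{(4,1)}$ appears in $[\mathbf{M}(\c)]^{\mathbb{C}}$, so $n\geq 2$; and if $n=2$ then $[\mathbf{M}(\c)]^{\mathbb{C}}\cong V^{(4,1)}$ is simple, contradicting Theorem~\ref{noSimple} (whose two allowed simple modules $V_{1,1}$ and $V_{-1,-1}$ are ruled out respectively by $\theta_{w_0}\,\mathbf{M}(\c)=0$ and by $(\theta_s\oplus\theta_t)\,\mathbf{M}(\c)\neq 0$). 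Hence $n=3$, whereupon $k(n-k)=2$ forces $k\in\{1,2\}$; after possibly interchanging $s$ and $t$ take $k=2$. Positivity together with $\mathrm{tr}(BB')=2$ then pins down $B=(1,1)^T$ and $B'=(1,1)$, yielding exactly the claimed matrix for $Q$. The main obstacle is the clean reduction from $(BB')^2=2\,BB'$ to the scalar identity $\mathrm{tr}(BB')=2$ via the rank-one factorisation; once that step is available, the positivity of the entries immediately closes the case.
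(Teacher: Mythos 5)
Your proof is correct, and it arrives at the same key scalar constraint as the paper by a genuinely different route. The paper's proof of Proposition~\ref{prop11} also starts from Corollary~\ref{Cor10} and the rank-one block structure, but it extracts the constraint by computing $\det(Q)$ via a sequence of explicit row and column operations and comparing with $\det(Q)=2^{n-1}$ from Corollary~\ref{Cor8+9}; after some work this yields $\bigl(\sum_j\lambda_jw_j\bigr)\bigl(\sum_i\mu_iv_i\bigr)=2$, which in your notation is exactly $\mathrm{tr}(BB')=(v'^Tu)(v^Tu')=2$. You instead read off the Kazhdan--Lusztig relation $\underline{st}\cdot\underline{st}=2\,\underline{st}+\underline{w_0}$ (equivalently $\theta_{st}^{\,2}\cong\theta_{st}^{\oplus 2}\oplus\theta_{w_0}$), use the hypothesis that $\theta_{w_0}$ acts by zero to get $\Lparen\theta_s\theta_t\Rparen^2=2\Lparen\theta_s\theta_t\Rparen$, and then solve this matrix quadratic using the rank-one factorisation; the trace identity $\mathrm{tr}(BB')=2$ then falls out in one line. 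Your route is more conceptual and shorter, since the linear-algebraic content is hidden in the factorisation $B=uv^T$, $B'=u'(v')^T$ rather than spread over a multi-step Gaussian elimination. The remaining steps (lower bound $n\geq 3$ via Lemma~\ref{L6} and Theorem~\ref{noSimple}, the inequality $\mathrm{tr}(BB')\geq k(n-k)$ from positivity and integrality of the entries, and the final determination $B=(1,1)^T$, $B'=(1,1)$) coincide with the paper's. One small remark: you only use the top-left block of $\Lparen\theta_s\theta_t\Rparen^2=2\Lparen\theta_s\theta_t\Rparen$; the top-right block yields the companion identity $BB'B=2B$, which is automatically satisfied once $(v^Tu')(v'^Tu)=2$, so nothing is lost.
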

\begin{proof}
 From Corollary \ref{Cor10} we know that 
 \begin{equation*}
 Q = \left(
    \begin{array}{rcl|rcl}
     2 & \cdots & 0 & \lambda_1v_1 & \cdots &\lambda_1 v_l \\
     \vdots & \ddots & \vdots & \vdots &  & \vdots \\
    0 & \cdots & 2 &\lambda_k v_1 & \cdots & \lambda_k v_l \\\hline
    \mu_1w_1 & \cdots & \mu_1w_k & 2 & \cdots & 0 \\
    \vdots & & \vdots & \vdots &\ddots & \vdots \\
    \mu_l w_1 & \cdots & \mu_l w_k & 0 & \cdots & 2 
    \end{array} 
\right),
\end{equation*}
where all $v_i, \mu_i, w_j, \lambda_j$ are positive integers and $n = k + l$. 
Without loss of generality we assume that $\lambda_1 = \mu_1 = 1$. Moreover, we know that 
$\text{det}(Q) = 2^{n-1}$ from Corollary \ref{Cor8+9}. On the other hand, we can calculate 
the determinant of $Q$ as follows: First we subtract suitable multiplies 
of the first row from rows $2,3,\dots,k$  and then  suitable multiples of column 
$k+1$ from columns $k+2$, $k+3$ to get:
\begin{equation*}
\text{det}(Q)  = \left|
     \begin{array}{rcccl|rcccl}
     2 & 0 & \cdots & 0 & 0 & v_1 & 0 & \cdots & 0 & 0 \\
    -2\lambda_2 & \ddots & & & 0 & 0 & & & & 0 \\ 
     \vdots & & \ddots & & \vdots & \vdots & & & & \vdots \\
    -2\lambda_{k-1} & & & \ddots & 0 & 0 & & & & 0 \\ 
    -2\lambda_k & 0 &  \cdots & 0 & 2 & 0 & 0 & \cdots & 0 & 0 \\\hline
    w_1 & w_2 & \cdots & w_{k-1} & w_k & 2 & -2\frac{v_2}{v_1} & \cdots & & -2\frac{v_l}{v_1} \\
    \vdots & & & & \vdots & 0 & \ddots & & & 0  \\
    \vdots & & & & \vdots & \vdots & &\ddots & & \vdots \\
    \vdots & & & & \vdots & 0 & & & \ddots & 0\\
    \mu_lw_1 & \mu_lw_2 & \cdots & \mu_lw_{k-1} & \mu_lw_k & 0 & 0 & \cdots & 0 & 2 
    \end{array} 
\right|
\end{equation*} 

Now we can add suitable multiples of rows $k+2,k+3,\dots$ to row $k+1$ and obtain:
\begin{equation*}
\text{det}(Q)   = \left|
     \begin{array}{rcccl|rcccl}
     2 & 0 & \cdots & 0 & 0 & v_1 & 0 & \cdots & 0 & 0 \\
    -2\lambda_2 & \ddots & & & 0 & 0 & & & & 0 \\ 
     \vdots & & \ddots & & \vdots & \vdots & & & & \vdots \\
    -2\lambda_{k-1} & & & \ddots & 0 & 0 & & & & 0 \\ 
    -2\lambda_k & 0 &  \cdots & 0 & 2 & 0 & 0 & \cdots & 0 & 0 \\\hline
    \tilde{w}_1 & \tilde{w}_2 & \cdots & \tilde{w}_{k-1} & \tilde{w}_k & 2 & 0 & \cdots & & 0 \\
    \vdots & & & & \vdots & 0 & \ddots & & & 0  \\
    \vdots & & & & \vdots & \vdots & &\ddots & & \vdots \\
    \vdots & & & & \vdots & 0 & & & \ddots & 0\\
    \mu_lw_1 & \mu_lw_2 & \cdots & \mu_lw_{k-1} & \mu_lw_k & 0 & 0 & \cdots & 0 & 2 
    \end{array} 
\right|,
\end{equation*}
where $\tilde{w}_i = w_i + \sum_{j=2}^l \mu_jw_i\frac{v_j}{v_1}$.

Now we see that 
\begin{displaymath}
\text{det}(Q)=2^{l-1} \left|
     \begin{array}{rcccl|r}
     2 & 0 & \cdots & 0 & 0 & v_1 \\
    -2\lambda_2 & \ddots & & & 0 & 0 \\ 
     \vdots & & \ddots & & \vdots & \vdots \\
    -2\lambda_{k-1} & & & \ddots & 0 & 0 \\ 
    -2\lambda_k & 0 &  \cdots & 0 & 2 & 0 \\\hline
    \tilde{w}_1 & \tilde{w}_2 & \cdots & \tilde{w}_{k-1} & \tilde{w}_k & 2 \\
    \end{array} 
\right| .
\end{displaymath}
We now expand the determinant with respect to the last column and get:
\begin{displaymath}
 2^{l-1}(-1)^{k+2}v_1 \underbrace{\left|
     \begin{array}{rcccl}
    -2\lambda_2 & 2 & & & 0 \\ 
     \vdots & & \ddots & & \vdots  \\
    -2\lambda_{k-1} & & & \ddots & 0 \\ 
    -2\lambda_k & 0 &  \cdots & 0 & 2 \\
    \tilde{w}_1 & \tilde{w}_2 & \cdots & \tilde{w}_{k-1} & \tilde{w}_k \\
    \end{array} 
\right|}_{=:C_k} + 
2^l \left|
     \begin{array}{rcccl}
     2 & 0 & \cdots & 0 & 0  \\
    -2\lambda_2 & \ddots & & & 0  \\ 
     \vdots & & \ddots & & \vdots  \\
    -2\lambda_{k-1} & & & \ddots & 0  \\ 
    -2\lambda_k & 0 &  \cdots & 0 & 2 \\
    \end{array} 
\right|
\end{displaymath}
The second determinant equals $2^k$. By adding suitable multiples of all rows 
to the last row, we have $C_k=2^{k-1}(-1)^{k-1}\left(\tilde{w}_1 +
\lambda_2\tilde{w}_2+\lambda_3\tilde{w}_3+\dots+\lambda_k\tilde{w}_k\right)$.
This implies $\text{det}(Q)= 2^n-2^{n-2}\left(\sum_{i=1}^k\lambda_iw_i\right)$.

Comparing our two expressions for the determinant of $Q$, we get 
\[
 4-\sum_{j=1}^k\lambda_jw_j\sum_{i=1}^l\mu_iv_i = 2,
\quad
\text{ that is }
\quad
\sum_{j=1}^k\lambda_jw_j\sum_{i=1}^l\mu_iv_i = 2.
\]
We know that $k+l = n \geq 3$ and $\mu_i, \lambda_j, v_i, w_j > 0$. Moreover, we may assume without loss 
of generality that $k \geq 2$. This yields that $k = 2$, $l = 1$ and, furthermore, 
$v_1 = w_1 = w_2 = \lambda_2 = 1$ which completes the proof.
\end{proof}

\begin{corollary}
\label{Cor12}
 There exists an ordering of $X_1, X_2, X_3$ such that
 \begin{align*}
  \Lparen\theta_s\Rparen = \begin{pmatrix}
   2 & 0 & 1 \\
   0 & 2 & 1 \\
   0 & 0 & 0 
  \end{pmatrix},
  \quad
  \Lparen\theta_t\Rparen = \begin{pmatrix}
   0 & 0 & 0 \\
   0 & 0 & 0 \\
   1 & 1 & 2
  \end{pmatrix}\qquad \text{or vice versa.}
 \end{align*}
\end{corollary}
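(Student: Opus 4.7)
The plan is to combine the block structure of $\Lparen\theta_s\Rparen$ and $\Lparen\theta_t\Rparen$ furnished by Lemma~\ref{L3} and Corollary~\ref{Cor5} with the explicit shape of $Q$ from Proposition~\ref{prop11}, and then solve the identity $Q = \Lparen\theta_s\Rparen + \Lparen\theta_t\Rparen$ for the remaining entries.

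Working in the ordering provided by Proposition~\ref{prop11}, I would let $S \subseteq \{1,2,3\}$ denote the set of indices $i$ with $\theta_s\, X_i \simeq 2 X_i$; by Lemma~\ref{L3}, for every $j \notin S$ the $j$-th column of $\Lparen\theta_s\Rparen$ is supported on the rows indexed by $S$. Define $T$ analogously for $\theta_t$. Lemma~\ref{L4} gives $S \cap T = \varnothing$, and Corollary~\ref{Cor5} together with the rank-three assumption then forces $S \cup T = \{1,2,3\}$, so $\{|S|, |T|\} = \{1,2\}$.

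The key observation is that for distinct indices $i, j$ lying in the same block one has $Q_{ji} = 0$: if $i \in S$, the $i$-th column of $\Lparen\theta_s\Rparen$ equals $2 e_i$, so its $j$-th entry is zero since $j \neq i$, while the $i$-th column of $\Lparen\theta_t\Rparen$ is supported on rows in $T$, so its $j$-th entry is zero since $j \in S$; the case $i \in T$ is symmetric. Inspecting the matrix $Q$, the unique off-diagonal pair that simultaneously vanishes is $Q_{12} = Q_{21} = 0$, so $\{1,2\}$ must lie inside one of the two blocks. Together with $|S|+|T|=3$ this forces $(S,T) = (\{1,2\},\{3\})$ or $(S,T) = (\{3\},\{1,2\})$; the two possibilities are exchanged by the symmetry $s \leftrightarrow t$, which accounts for the ``or vice versa'' in the statement.

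Finally, I would read off the unknown entries. In the case $S = \{1,2\}$, Lemma~\ref{L3} fixes $\Lparen\theta_s\Rparen$ up to its third column $(b_1, b_2, 0)^t$ and $\Lparen\theta_t\Rparen$ up to its third row $(b_1', b_2', 2)$, all remaining entries being zero. Subtracting these known parts from $Q$ forces $b_1 = b_2 = b_1' = b_2' = 1$, yielding exactly the first claimed form. There is no substantial obstacle: the main point is the zero-entry bookkeeping that excludes the four ``bad'' partitions of $\{1,2,3\}$, after which the entries are determined by a one-line subtraction.
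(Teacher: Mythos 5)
Your proof is correct. The paper leaves Corollary~\ref{Cor12} without an explicit proof, expecting the reader to combine Lemma~\ref{L3}, Lemma~\ref{L4}, Corollary~\ref{Cor5} and Proposition~\ref{prop11}; the implicit route is simply to note that the proof of Proposition~\ref{prop11} already produces $k=2$, $l=1$ in the ordering of Corollary~\ref{Cor5}, which pins down the partition directly. Your variant is essentially the same argument, but makes it self-contained by recovering the partition from the zero entries of $Q$ alone: you observe that distinct indices in the same block $S$ or $T$ produce matching zero off-diagonal entries in $Q$, so the only admissible partition of $\{1,2,3\}$ compatible with the displayed $Q$ is $\{1,2\}\cup\{3\}$, after which the entries of $\Lparen\theta_s\Rparen$ and $\Lparen\theta_t\Rparen$ are forced by subtraction. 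This has the small advantage of relying only on the statement of Proposition~\ref{prop11} rather than its proof; otherwise the reasoning matches what the paper intends.
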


In the following we will by $\Lparen\theta_s\Rparen, \Lparen\theta_t\Rparen$ always refer to the first case 
of the corollary. The second case is similar, by symmetry. 

\begin{remark}\label{absdef}
For the corresponding ordering of $L_1, L_2, L_3$ we have 
 \begin{align*}
  \llbracket\theta_s\rrbracket = \begin{pmatrix}
   2 & 0 & 0 \\
   0 & 2 & 0 \\
   1 & 1 & 0 
  \end{pmatrix},
  \quad
  \llbracket\theta_t\rrbracket = \begin{pmatrix}
   0 & 0 & 1 \\
   0 & 0 & 1 \\
   0 & 0 & 2
  \end{pmatrix}.
 \end{align*}
\end{remark}

\begin{corollary}
The decategorification of $\mathbf{M}$ is isomorphic to the 
decategorification of a cell $2$-representation.
\end{corollary}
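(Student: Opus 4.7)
The plan is to split into the three cases that have been separated throughout Section~\ref{sStructofMat} and Section~\ref{sd4}. If $\theta_{w_0}\mathbf{M}(\c) \neq 0$, then by the discussion at the beginning of Subsection~\ref{sStructofMat} we have $\mathbf{M} \simeq \mathbf{C}_{\mathcal{L}_{w_0}}$, so the decategorifications coincide on the nose. If instead $(\theta_s \oplus \theta_t)\mathbf{M}(\c) = 0$, then $\mathbf{M} \simeq \mathbf{C}_{\mathcal{L}_e}$ by the same discussion, and again there is nothing to prove.

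So the only case needing argument is the one where both assumptions fail, which is precisely the situation governed by Proposition~\ref{prop11} and Corollary~\ref{Cor12}. Here I would simply compare matrices. Corollary~\ref{Cor12} determines $\Lparen\theta_s\Rparen$ and $\Lparen\theta_t\Rparen$ explicitly (up to swapping $s$ and $t$) in the basis $[X_1],[X_2],[X_3]$ of $[\mathbf{M}(\c)]$. By Theorem~\ref{Soergel}, $[\mathscr{S}_4(\c,\c)]$ is isomorphic to $\Z[D_4]$ and is generated as a ring by $[\theta_s]$ and $[\theta_t]$; hence the matrices of $[\theta_s]$ and $[\theta_t]$ completely determine the $[\mathscr{S}_4(\c,\c)]$-module structure on $[\mathbf{M}(\c)]$.

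On the other side, in Subsection~\ref{DecatS4} we computed, in the basis $[\theta_s], [\theta_{sts}], [\theta_{ts}]$ of $[\mathbf{C}_{\mathcal{L}_s}]$, the matrices of $[\theta_s]$ and $[\theta_t]$ using Proposition~\ref{leftMult}, and they agree verbatim with the matrices appearing in Corollary~\ref{Cor12} (in the first case), and with those obtained for $\mathbf{C}_{\mathcal{L}_t}$ (in the second, $s$-$t$-swapped case). Therefore the $\Z$-linear map sending $[X_i]$ to the $i$-th basis vector of $[\mathbf{C}_{\mathcal{L}_s}]$ (respectively $[\mathbf{C}_{\mathcal{L}_t}]$) is an isomorphism of $[\mathscr{S}_4(\c,\c)]$-modules, which gives the required identification after tensoring with $\C$.

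There is essentially no obstacle in this corollary: all the content is packaged in Corollary~\ref{Cor12} together with the explicit computation of the cell module matrices in Subsection~\ref{DecatS4}. The only point to be careful about is that the chosen ordering of $X_1, X_2, X_3$ in Corollary~\ref{Cor12} matches the chosen ordering of generators in Subsection~\ref{DecatS4}; but this is automatic since both are characterised as the ordering in which $[\theta_s]$ acts as in Remark~\ref{absdef}.
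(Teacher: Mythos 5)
Your proposal is correct and follows essentially the same route as the paper: the paper's proof is just the one-line observation that the matrices determined in Corollary~\ref{Cor12} coincide with those computed for $\mathbf{C}_{\mathcal{L}_s}$ (or $\mathbf{C}_{\mathcal{L}_t}$) in Subsection~\ref{DecatS4}, and you have simply spelled out why matrix agreement for $[\theta_s]$ and $[\theta_t]$ suffices (namely, Theorem~\ref{Soergel} and the fact that $\underline{e},\underline{s},\underline{t}$ generate $\Z[D_4]$ as a ring). Your treatment of the first two cases is harmless but superfluous, since the corollary sits in the part of Section~\ref{sd4} where $\theta_{w_0}\mathbf{M}(\c)=0$ and $(\theta_s\oplus\theta_t)\mathbf{M}(\c)\neq 0$ have already been imposed.
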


\begin{proof}
This follows directly from the description of the decategorifications of the 
cell $2$-representations of $\mathscr{S}_4$ given in Subsection~\ref{DecatS4}.
\end{proof}

\subsection{$\mathbf{M}$ is equivalent to a cell $2$-representation}

We need the following lemma:

\begin{lemma}\label{projective}
Assume the matrices of $\Lparen\theta_s\Rparen$ and $\Lparen\theta_t\Rparen$ are given as in Corollary $\ref{Cor12}$ 
and let $L_1, L_2, L_3$ be the $3$ simple indecomposable objects in $\overline{\mathbf{M}}(\c)$ 
corresponding to $P_1, P_2, P_3$. Then $\theta_s\: L_1$, $\theta_t \theta_s\: L_1$ and 
$\theta_s \theta_t \theta_s\: L_1$ are projective objects.
\end{lemma}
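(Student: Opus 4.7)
My plan is to first read off the composition factors of the three modules from $\llbracket\theta_s\rrbracket$ and $\llbracket\theta_t\rrbracket$, then use self-adjointness of $\theta_s$ and $\theta_t$ to pin down their tops and conclude that each is an indecomposable quotient of a specific indecomposable projective, and finally to promote these quotient surjections to isomorphisms. From Remark~\ref{absdef} one computes $[\theta_s L_1]=2L_1+L_3$, $[\theta_t\theta_s L_1]=L_1+L_2+2L_3$, and $[\theta_s\theta_t\theta_s L_1]=2L_1+2L_2+2L_3$; combining with $\theta_s\theta_t\theta_s\cong\theta_s\oplus\theta_{sts}$ one gets $\theta_s\theta_t\theta_s L_1\cong\theta_s L_1\oplus\theta_{sts}L_1$ with $[\theta_{sts}L_1]=2L_2+L_3$.

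\textbf{Tops via adjunction.} Next I would use $\dim\mathrm{Hom}(\theta_s L_1,L_i)=\dim\mathrm{Hom}(L_1,\theta_s L_i)$ from self-adjunction, observing that only $i=1$ contributes (since $\theta_s L_3=0$ and $L_1$ is not a composition factor of $\theta_s L_2$). Setting $a:=\dim\mathrm{Hom}(\theta_s L_1,L_1)$, the same adjunction reads $a$ as the $L_1$-multiplicity of the socle of $\theta_s L_1$. Since $L_3$ cannot lie in the top, $\theta_s L_1$ is not semisimple, so its top and socle occupy disjoint composition-factor slots; this forces $2a\le[\theta_s L_1:L_1]=2$ and hence $a=1$. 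Therefore $\theta_s L_1$ is indecomposable with simple top $L_1$, whence a quotient of $P_1$. I would repeat this analysis for $\theta_t\theta_s L_1$ (using $\theta_t L_1=\theta_t L_2=0$) and for $\theta_{sts}L_1$ (using $\theta_{sts}L_3=0$ and the absence of $L_2$ from $[\theta_{sts}L_1]$), concluding that $\theta_t\theta_s L_1$ is an indecomposable quotient of $P_3$ with simple top $L_3$, and $\theta_{sts}L_1$ an indecomposable quotient of $P_2$ with simple top $L_2$.

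\textbf{Upgrade to projectivity.} To close, I would invoke that the abelianization of a simple transitive $2$-representation of a fiat $2$-category is equivalent to the module category of a weakly symmetric (in particular, self-injective) finite-dimensional algebra, so that every indecomposable projective in $\overline{\mathbf{M}}(\c)$ is also indecomposable injective, and the exact self-adjoint functor $\theta_s$ preserves both projectives and injectives. Applying $\theta_s$ to a minimal injective copresentation of $L_1$ embeds $\theta_s L_1$ as a submodule of the projective-injective object $\theta_s I_1$. An indecomposable module which is simultaneously a quotient of $P_1$ and a submodule of a projective-injective, and whose simple top agrees with that of $P_1$, must be isomorphic to $P_1$. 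The analogous arguments yield $\theta_t\theta_s L_1\cong P_3$ and $\theta_{sts}L_1\cong P_2$, and consequently $\theta_s\theta_t\theta_s L_1\cong P_1\oplus P_2$. The main obstacle is precisely this last step: to rule out proper quotients, one must either cite the self-injectivity of the abelianization algebra of a simple transitive $2$-representation of a fiat $2$-category as a black-box structural result, or else carry out a finer combinatorial analysis directly pinning down the Cartan matrix of $\overline{\mathbf{M}}(\c)$ from the matrix data together with simple transitivity.
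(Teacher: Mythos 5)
Your combinatorial preparation (reading off the composition factors from Remark~\ref{absdef}, and the self-adjunction argument showing each of $\theta_s\,L_1$, $\theta_t\theta_s\,L_1$, $\theta_{sts}\,L_1$ is indecomposable with simple top $L_1$, $L_3$, $L_2$ respectively, hence a quotient of $P_1$, $P_3$, $P_2$) is essentially correct. But the ``upgrade to projectivity'' step does not close the gap that you yourself flag. Even granting self-injectivity/weak symmetry of the underlying algebra, so that $P_1=I_1$, the claimed conclusion fails: an indecomposable module with simple top $L_1$ and simple socle $L_1$ that is simultaneously a quotient of $P_1$ and a submodule of $P_1$ need not equal $P_1$ --- over $\Bbbk[x]/(x^3)$ the unique length-two module is exactly such an example and is not projective. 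Self-injectivity controls the socle series qualitatively but gives no information about the Cartan numbers, which is precisely what you would need to force the surjection $P_1\twoheadrightarrow\theta_s\,L_1$ to be an isomorphism; and the Cartan matrix cannot be pinned down beforehand without circularity, since in the paper it is derived \emph{from} this lemma.

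The paper's proof uses simple transitivity in a sharper, non-numerical way that you would need to reproduce. It takes minimal projective resolutions $A_\bullet,B_\bullet,C_\bullet$ of the three modules, uses the explicit shapes of $\Lparen\theta_s\Rparen$, $\Lparen\theta_t\Rparen$ and the vanishing $\theta_{w_0}\,\mathbf{M}(\c)=0$ to show that $\theta_s$ and $\theta_t$ carry each of $A_\bullet,B_\bullet,C_\bullet$ to a \emph{minimal} resolution that is a direct sum of copies of $A_\bullet,B_\bullet,C_\bullet$, and then considers the ideal $\mathcal{I}$ of $\mathcal{P}\simeq\mathbf{M}(\c)$ generated by the three first differentials $\alpha,\beta,\gamma$. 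Minimality puts $\mathcal{I}$ in the radical, the preceding computation makes $\mathcal{I}$ stable under $\theta_s$ and $\theta_t$, so simple transitivity forces $\mathcal{I}=0$, i.e. $\alpha=\beta=\gamma=0$. The obstruction to projectivity is thus killed at the level of $2$-morphisms by simple transitivity itself, not by a structural property of the underlying algebra; this is the missing ``finer analysis'' in your proposal.
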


\begin{proof}
Let $A_{\bullet}$, $B_{\bullet}$ and $C_{\bullet}$ be minimal projective resolutions of 
$\theta_s\:L_1$, $\theta_t\theta_s \: L_1$ and $\theta_s\theta_t\theta_s \: L_1$, respectively.
This means that we have an exact sequence
\begin{align*}
\cdots \to A_1 \xrightarrow{\alpha} A_0 \to \theta_s\:L_1\to 0, 
\end{align*}
and similarly for $B_{\bullet}$ (with the corresponding morphism $\beta:B_1\to B_0$) and 
$C_{\bullet}$ (with the corresponding morphism $\gamma:C_1\to C_0$). 
Note that, since $\theta_s \circ \theta_s = \theta_s \oplus \theta_s$, 
we have that $A_{\bullet} \oplus A_{\bullet}$ is a minimal projective resolution of 
$\theta_s\circ\theta_s \: L_1$. 

Consider $\theta_sA_{\bullet}$. This is obviously a projective resolution of $\theta_s\circ \theta_s\:L_1$ since 
$\theta_s$ is exact and sends projectives to projectives (as a self-adjoint functor). 
In particular, $\theta_sA_{\bullet}$ has $A_{\bullet}$ as a direct summand.
The third row in the matrix of  $[\theta_s]$ is zero. This means that $P_3$ does not appear as a direct summand 
of any $\theta_s\:R$ with $R$ projective. This, in turn, implies that $P_3$ cannot appear as a summand
of any $\theta_s \: A_i$. Because of how the first two columns of $[\theta_s]$ look like, we get that $\theta_s$ simply doubles
each direct summand of each  $A_i$. This implies that $\theta_s \: A_{\bullet}$ is a minimal projective resolution
of $\theta_s\circ \theta_s\:L_1\simeq \theta_s\:L_1 \oplus \theta_s\:L_1$.
A similar argument shows that $\theta_s \: C_{\bullet}$ is a minimal projective resolution of 
$\theta_s \theta_t \theta_s\: L_1\oplus \theta_s \theta_t \theta_s\: L_1$.

As $\theta_t \circ \theta_t = \theta_t \oplus \theta_t$, we have that $B_{\bullet} \oplus B_{\bullet}$ 
is a minimal projective resolution of $\theta_t \circ (\theta_t \circ\theta_s L_1)$. Similarly to the above,
for $R$ projective, the module $\theta_t\:R$ is a direct sum of copies of $P_3$, moreover, $\theta_t$
doubles the module $P_3$. It follows that  $\theta_t\: B_{\bullet}$ is a minimal projective resolution of 
$\theta_t \circ (\theta_t \circ\theta_s \: L_1)$. 

Next, we notice that we have 
$\theta_t\theta_s\theta_t\theta_s = \theta_{ts}\theta_{ts} = \theta_{ts} \oplus \theta_{ts} \oplus \theta_{tsts}$.
Since, by assumptions, $\theta_{tsts}\:\overline{\mathbf{M}}(\c) = \theta_{w_0}\:\overline{\mathbf{M}}(\c) = 0$, 
the last summand in this decomposition acts as zero. As the matrix of $\theta_t\theta_s$ sends every projective $[P_i]$ to $2[P_3]$,
similarly to the above we obtain  $\theta_t\theta_s\: B_{\bullet} = B_{\bullet}\oplus B_{\bullet}$. 

Now we observe that 
$\theta_s\:B_{\bullet}$ is a projective resolution of $\theta_s\theta_t\theta_s\:L_1$ 
and hence contains $C_{\bullet}$ as a direct summand.
Therefore $\theta_t\theta_s \: B_{\bullet} = B_{\bullet} \oplus B_{\bullet}$ contains 
$\theta_t\:C_{\bullet}$ as a direct summand. On the other hand, $\theta_t \:C_{\bullet}$ contains 
$\theta_t\theta_s\:B_{\bullet} = B_{\bullet} \oplus B_{\bullet}$ as a summand, by a similar argument.
Thus $\theta_t\: C_{\bullet} = B_{\bullet} \oplus B_{\bullet}$ and hence $\theta_t\: C_{\bullet}$ 
is minimal. 

Note that $\theta_t$ does not annihilate any projective objects. Therefore it sends non-minimal resolutions
to non-minimal resolutions. This implies that $\theta_s\:B_{\bullet}$ is minimal. 

Finally, we observe that $\theta_s\theta_t\theta_s = \theta_s \oplus \theta_{sts}$ and therefore 
$A_{\bullet}$ is,  in fact, a direct summand of $C_{\bullet}$. Therefore
$\theta_t\,A_{\bullet}$ is a direct summand of 
$\theta_t\,C_{\bullet}= B_{\bullet} \oplus B_{\bullet}$.
We note that 
$\theta_t\theta_s\, L_1\neq 0$, as follows directly from the matrices $\llbracket\theta_s\rrbracket$ 
and $\llbracket\theta_t\rrbracket$  given in Remark~\ref{absdef}. Therefore $\theta_t\,A_{\bullet}$
is nonzero and hence must be equal to $B_{\bullet}$. This shows that $\theta_t\,A_{\bullet}$ is minimal.

The above shows that both $\theta_s$ and $\theta_t$ map $A_{\bullet}$, $B_{\bullet}$ and $C_{\bullet}$
to minimal resolutions and the latter are, moreover, direct sums of copies of 
$A_{\bullet}$, $B_{\bullet}$ and $C_{\bullet}$.

Denote by $\mathcal{P}$ the category of projective objects in $\overline{\mathbf{M}}$.
This category is equivalent to $\mathbf{M}(\c)$. 
In particular, it carries the structure of a simple transitive 
$2$-representation of $\mathscr{S}_4$ by restriction. Consider the ideal $\mathcal{I}$ of $\mathcal{P}$ generated by $\alpha$, $\beta$ and $\gamma$. 
From the minimality of $A_{\bullet}$, $B_{\bullet}$ and $C_{\bullet}$ it 
follows that $\mathcal{I}$ is contained in the radical of $\mathcal{P}$. Since both $\theta_s$ and $\theta_t$
send $A_{\bullet}$, $B_{\bullet}$ and $C_{\bullet}$
to direct sums of copies of $A_{\bullet}$, $B_{\bullet}$ and $C_{\bullet}$, 
the ideal $\mathcal{I}$
is stable under the action of both $\theta_s$ and $\theta_t$. Thus simple 
transitivity of $\mathbf{M}$ implies that $\mathcal{I} = 0$ and hence $\alpha = \beta=\gamma=0$. 
The claim follows.
\end{proof}  

\begin{theorem}
If $\mathbf{M}$ is a simple transitive $2$-representation of $\mathscr{S}_4$, 
then $\mathbf{M}$ is equivalent to a cell $2$-representation. 
\end{theorem}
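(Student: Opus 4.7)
The strategy is to mimic the proof of Theorem~\ref{apexTheorem}, substituting the role of strong regularity (which fails for $\mathcal{J}_2$) by the explicit projectivity statements of Lemma~\ref{projective}. First, I would reduce to the combinatorial case handled by Corollary~\ref{Cor12}. Applying Theorem~\ref{IsoFaithClassification} to the strongly regular cells $\mathcal{J}_1$ and $\mathcal{J}_3$ together with Propositions~\ref{1ElementCells-1} and~\ref{1ElementCells-2}, the cases where $\theta_{w_0}\,\mathbf{M}(\c) \neq 0$ or $(\theta_s \oplus \theta_t)\,\mathbf{M}(\c) = 0$ already give $\mathbf{M} \simeq \mathbf{C}_{\l_{w_0}}$ or $\mathbf{M} \simeq \mathbf{C}_{\l_e}$, respectively. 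In the remaining case, Proposition~\ref{prop11} and Corollary~\ref{Cor12} force $\mathbf{M}$ to have rank three with the explicit matrices displayed, and after possibly swapping $s \leftrightarrow t$ the goal is to produce an equivalence $\mathbf{M} \simeq \mathbf{C}_{\l_s}$.

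Next, following the construction in the last paragraph of the proof of Theorem~\ref{apexTheorem}, I would build a $2$-natural transformation $\Phi\colon \mathbf{P}_{\c} \to \overline{\mathbf{M}}$ sending $P_{\mathbbm{1}_{\c}}$ to the simple top $L_1$ of the projective $P_1 \in \overline{\mathbf{M}}(\c)$. Then $\Phi(\theta_w) = \theta_w\,L_1$, and in particular $\Phi(\theta_{w_0}) = 0$ by assumption. Put $\mathbf{N} := \mathbf{G}_{\mathbf{P}_{\c}}(\{F : F \geq_L \theta_s\})$; its indecomposable objects are $\theta_s, \theta_{ts}, \theta_{sts}, \theta_{w_0}$. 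Lemma~\ref{projective} together with the decomposition $\theta_s \theta_t \theta_s \simeq \theta_s \oplus \theta_{sts}$ ensures that each of $\theta_s\,L_1$, $\theta_t\theta_s\,L_1$ and $\theta_{sts}\,L_1$ is projective in $\overline{\mathbf{M}}(\c)$. Inspecting the matrices in Corollary~\ref{Cor12} and Remark~\ref{absdef} then identifies these images as $\theta_s\,L_1 = P_1$, $\theta_t\theta_s\,L_1 = P_3$ and $\theta_{sts}\,L_1 = P_2$. Consequently $\Phi|_{\mathbf{N}}$ is essentially surjective onto the additive category of projective objects in $\overline{\mathbf{M}}(\c)$, which is equivalent to $\mathbf{M}(\c)$.

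Finally, I would close the argument exactly as in Theorem~\ref{apexTheorem}. Setting $\mathbf{I} := \ker \Phi|_{\mathbf{N}}$, this is an $\mathscr{S}_4$-stable ideal of $\mathbf{N}$ that is proper because $\Phi(\mathrm{id}_{\theta_s}) = \mathrm{id}_{P_1} \neq 0$. Invoking \cite[Theorem~11]{MazorchukMiemietz2} as in the proof of Theorem~\ref{apexTheorem} yields an equivalence $\mathbf{N}/\mathbf{I} \simeq \mathbf{M}$. Since $\mathbf{M}$ is simple transitive, so is $\mathbf{N}/\mathbf{I}$, which forces $\mathbf{I}$ to coincide with the unique maximal proper $\mathscr{S}_4$-stable ideal $\mathscr{J}$ of $\mathbf{N}$; by definition $\mathbf{N}/\mathscr{J} = \mathbf{C}_{\l_s}$, so $\mathbf{M} \simeq \mathbf{C}_{\l_s}$. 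The main obstacle in this strategy is the projectivity of $\theta_w\,L_1$ for $w \in \l_s$, which is supplied by Lemma~\ref{projective}; with that input, the remaining steps are a routine adaptation of the template of Theorem~\ref{apexTheorem}.
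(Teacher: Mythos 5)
Your reduction to the rank-three case, the construction of $\Phi\colon \mathbf{P}_{\c}\to \overline{\mathbf{M}}$ sending $\mathbbm{1}_{\c}$ to $L_1$, and the identifications $\theta_s\, L_1=P_1$, $\theta_t\theta_s\, L_1 = P_3$, $\theta_{sts}\, L_1 = P_2$ (via Lemma~\ref{projective} and Corollary~\ref{Cor12}) all coincide with the paper's argument. The final paragraph, however, contains a genuine gap. You set $\mathbf{I}:=\ker\Phi|_{\mathbf{N}}$ and assert that ``invoking \cite[Theorem~11]{MazorchukMiemietz2} as in Theorem~\ref{apexTheorem} yields $\mathbf{N}/\mathbf{I}\simeq\mathbf{M}$.'' That theorem only gives $\mathbf{M}\simeq\overline{\mathbf{M}}_{\mathrm{pr}}$, the natural $2$-representation on projectives. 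By construction $\mathbf{N}/\mathbf{I}$ is isomorphic to the \emph{image} of $\Phi|_{\mathbf{N}}$, a faithful and essentially surjective but a priori \emph{non-full} $\mathscr{S}_4$-equivariant subcategory of $\overline{\mathbf{M}}_{\mathrm{pr}}$: nothing you have established guarantees that every morphism $\theta_w\,L_1\to\theta_{w'}\,L_1$ in $\overline{\mathbf{M}}(\c)$ arises as $\Phi$ of a $2$-morphism $\theta_w\to\theta_{w'}$. Without fullness, $\mathbf{N}/\mathbf{I}\not\simeq\mathbf{M}$, and the ensuing chain ($\mathbf{N}/\mathbf{I}$ simple transitive $\Rightarrow$ $\mathbf{I}=\mathscr{J}$ $\Rightarrow$ $\mathbf{M}\simeq\mathbf{C}_{\l_s}$) collapses. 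Note also that the cited invocation in Theorem~\ref{apexTheorem} takes $\mathbf{I}$ to be the cell-defining ideal and relies on the strong regularity of $\mathcal{J}$ to control Hom-spaces; that hypothesis is exactly what fails for $\mathcal{J}_2$ in $\mathscr{S}_4$, so the step cannot simply be carried over.

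The paper closes precisely this gap with a Cartan-matrix sandwich. It takes $\mathbf{I}$ to be the cell-defining ideal, forms $\mathbf{M}'=\overline{\mathbf{M}}_{\mathrm{pr}}/\Phi(\mathbf{I})$, and observes that the Cartan matrix of $\mathbf{M}'$ is componentwise $\leq$ that of $\overline{\mathbf{M}}_{\mathrm{pr}}$ (surjection) and $\geq$ that of $\mathbf{C}_{\l_s}$ (the induced map $\mathbf{C}_{\l_s}\to\mathbf{M}'$ is faithful). Since the composition factors of $P_1,P_2,P_3$ are completely determined by Remark~\ref{absdef}, the Cartan matrices of $\overline{\mathbf{M}}_{\mathrm{pr}}$ and $\mathbf{C}_{\l_s}$ coincide, so both inequalities are equalities, $\Phi(\mathbf{I})=0$, and the faithful essentially-surjective functor $\mathbf{C}_{\l_s}\to\overline{\mathbf{M}}_{\mathrm{pr}}$ becomes a dimension-preserving injection between equidimensional Hom-spaces, hence an equivalence. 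You would need to add such a fullness argument (the Cartan comparison being the natural one, given that all composition multiplicities are already pinned down) to make your final step valid.
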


\begin{proof}
As we have seen above, we only need to consider the case for which we have $\theta_{w_0} \mathbf{M}(\c)= 0$ and 
$(\theta_s \oplus \theta_t) \mathbf{M}(\c) \neq 0$ since all other cases are already covered, 
see Subsection~\ref{sub62}.  Moreover, from Proposition~\ref{prop11} we know that the rank of 
$\mathbf{M}$ in case $\theta_{w_0} \mathbf{M}(\c)= 0$ and 
$(\theta_s \oplus \theta_t) \mathbf{M}(\c) \neq 0$ is $3$. 

Assume that $\Lparen\theta_s\Rparen$ and $\Lparen\theta_t\Rparen$ are given as in Corollary \ref{Cor12}
(the other case is dealt with in a similar way). We claim that $\mathbf{M}$ is equivalent to 
$\mathbf{C}_{\mathcal{L}_s}$. Recall that $\mathbf{C}_{\mathcal{L}_s}$ is the quotient of 
$\text{add}(\{\theta_s,\theta_{ts},\theta_{sts},\theta_{tsts}\})$ inside the principal $2$-representation
of $\mathscr{S}_4$ by the unique maximal $\mathscr{S}_4$-invariant ideal $\mathbf{I}$ given by 
Lemma~\ref{SimpTransIdeal}. Recall also that $\mathbf{M}$ is equivalent to the $2$-subrepresentation 
$\overline{\mathbf{M}}_{\text{pr}}$ of $\overline{\mathbf{M}}$ given by restricting the action of 
$\mathscr{S}_4$ to the subcategory of projective objects in $\overline{\mathbf{M}}(\c)$. 
 
Due to Remark~\ref{absdef}, the module $\theta_s \: L_1$ has two simple subquotients,
namely $L_1$ with multiplicity two and $L_3$ with multiplicity one. As $\theta_s$ annihilates $L_3$,
by adjunction, $P_3$ cannot be a direct summand of $\theta_s \: L_1$. Since
$\theta_s \: L_1$ is projective by Lemma~\ref{projective}, we thus either have $\theta_s \: L_1=P_1$
or $\theta_s \: L_1=P_1\oplus P_1$. In the latter case all composition multiplicities of 
$\theta_s \: L_1$ should be even. Therefore $\theta_s \: L_1=P_1$. From
Corollary~\ref{Cor12} it follows that  $\theta_t\theta_s\: L_1 = P_3$.
By a similar argument one shows that $\theta_{sts}\:L_1 = P_2$.
 
Consider the canonical $2$-natural transformation 
\begin{displaymath}
\begin{array}{rl}
  \Phi: \mathbf{P}_{\c} &\to \overline{\mathbf{M}}, \\
	\mathbbm{1}_{\c} &\mapsto L_1. \\
\end{array}
\end{displaymath}
This restricts to a map from 
$\mathbf{N}(\c):=\text{add}(\{\theta_s,\theta_{ts},\theta_{sts},\theta_{tsts}\})$ inside $\mathbf{P}_{\c}$ 
to $\overline{\mathbf{M}}_{\text{pr}}(\c)$.
By construction, we have  $\mathbf{C}_{\mathcal{L}_s} \simeq \mathbf{N}/\mathbf{I}$ where $\mathbf{I}$ is the 
unique maximal ideal that does not contain $\text{id}_F$ for any $F \in \mathcal{L}_s$. 
Consider the quotient $\mathbf{M}' = \overline{\mathbf{M}}_{\text{pr}}/\Phi(\mathbf{I})$. 
Since $\overline{\mathbf{M}}_{\text{pr}}$ surjects onto $\mathbf{M'}$, the Cartan matrix 
of $\mathbf{M'}$ is component-wise less that or equal to the Cartan matrix of the 
$\overline{\mathbf{M}}_{\text{pr}}$ (here by the Cartan matrix we mean the matrix which gives dimensions of
homomorphism spaces between indecomposable projective objects). At the same time,
the Cartan matrix 
of $\mathbf{M'}$ is component-wise greater than or equal to the Cartan matrix of 
$\mathbf{C}_{\mathcal{L}_s}$ (since endomorphisms in $\mathbf{C}_{\mathcal{L}_s}(\c)$ 
embed into endomorphisms of  $\mathbf{M'}(\c)$ by construction).

However, all subquotients of indecomposable projective object $\overline{\mathbf{M}}_{\text{pr}}$ 
are determined above and we see that the Cartan matrix for $\overline{\mathbf{M}}_{\text{pr}}$
equals the Cartan matrix for $\mathbf{C}_{\mathcal{L}_s}$. This implies that $\Phi(\mathbf{I})=0$
and $\mathbf{M}'=\overline{\mathbf{M}}_{\text{pr}}$.
This yields that $\Phi$ induces an equivalence from $\mathbf{C}_{\mathcal{L}_s}$
to $\overline{\mathbf{M}}_{\text{pr}}$ and completes the proof.
\end{proof}

\section{Simple transitive $2$-representations of $\mathscr{S}_n$ of small ranks} 
\label{Rank1and2Case}

Here we study simple transitive $2$-representations of rank one or two for all $\mathscr{S}_n$. 

\subsection{The rank one case}

\begin{theorem}
Let $\mathbf{M}$ be a simple transitive $2$-representation of $\mathscr{S}_n$ which has rank one.
Then $\mathbf{M}$ is equivalent to either $\mathbf{C}_{\mathcal{L}_e}$ or $\mathbf{C}_{\mathcal{L}_{w_0}}$. 
\end{theorem}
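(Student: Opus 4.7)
The plan is to reduce the problem to a dichotomy on the decategorification and then apply Theorem~\ref{apexTheorem} in each case. Since $\mathbf{M}$ has rank one, the vector space $V := [\mathbf{M}(\c)]^{\C}$ is one-dimensional, and hence is automatically a simple module over $[\mathscr{S}_n(\c,\c)]^{\C}$. For $n > 3$, Theorem~\ref{noSimple} then implies $V \simeq V_{1,1}$ or $V \simeq V_{-1,-1}$; for $n = 3$ the same dichotomy holds since $V_{1,1}$ and $V_{-1,-1}$ are the only one-dimensional simple $D_3$-modules. So it suffices to treat these two possibilities separately.

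In the case $V \simeq V_{-1,-1}$, both $\underline{s}$ and $\underline{t}$ act as zero on $V$. Denoting by $X$ the unique (up to isomorphism) indecomposable object of $\mathbf{M}(\c)$, the rank one assumption forces $\theta_s\,X = 0 = \theta_t\,X$. For any $w \in D_n$ with $w \neq e$, choosing a reduced expression $w = s_1 s_2 \cdots s_k$, the $1$-morphism $\theta_w$ is a direct summand of $\theta_{s_1} \circ \theta_{s_2} \circ \cdots \circ \theta_{s_k}$; since the rightmost factor already annihilates $X$, it follows that $\theta_w\,X = 0$. Consequently both $\mathcal{J}_2$ and $\mathcal{J}_3$ annihilate $\mathbf{M}$, and $\mathcal{J}_1 = \{\theta_e\}$ is the unique maximal two-sided cell that does not annihilate $\mathbf{M}$. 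Being a singleton, $\mathcal{J}_1$ is trivially strongly regular, so Theorem~\ref{apexTheorem} yields $\mathbf{M} \simeq \mathbf{C}_{\mathcal{L}_e}$.

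In the case $V \simeq V_{1,1}$, the basis element $\underline{w_0} = \sum_{w \in D_n} w$ acts on $V$ as $|D_n| = 2n \neq 0$, so $\theta_{w_0}$ does not annihilate $\mathbf{M}(\c)$. Since $\mathcal{J}_3 = \{\theta_{w_0}\}$ is the maximum in the two-sided order on cells of $\mathscr{S}_n$, it is automatically the unique maximal non-annihilating two-sided cell, and being a singleton it is strongly regular. A second application of Theorem~\ref{apexTheorem} therefore gives $\mathbf{M} \simeq \mathbf{C}_{\mathcal{L}_{w_0}}$. No serious obstacle arises in this approach; the only thing one needs to verify in each case is the strong regularity of the unique maximal non-annihilating cell, which is immediate since both $\mathcal{J}_1$ and $\mathcal{J}_3$ are singletons.
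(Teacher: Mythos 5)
Your proof is correct and follows essentially the same route as the paper: reduce via Theorem~\ref{noSimple} to the dichotomy $V\simeq V_{-1,-1}$ or $V\simeq V_{1,1}$, then identify the cell $2$-representation in each case. The one place you differ is the $V_{-1,-1}$ case: the paper observes directly that $\mathbf{M}$ factors through the quotient of $\mathscr{S}_n$ by the $2$-ideal generated by all $\theta_w$ with $w\neq e$, which has only the identity $1$-morphism and hence a unique simple transitive $2$-representation, namely $\mathbf{C}_{\mathcal{L}_e}$; you instead invoke Theorem~\ref{apexTheorem} with $\mathcal{J}=\mathcal{J}_1$, which also works since a singleton cell is trivially strongly regular. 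Your explicit handling of $n=3$ (where Theorem~\ref{noSimple} is not available, but the conclusion is immediate because $V_{1,1}$ and $V_{-1,-1}$ are the only one-dimensional $D_3$-modules) is a small improvement in precision over the paper, which leaves the standing assumption $n>3$ from the previous section somewhat implicit. Both approaches are sound and of comparable length.
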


\begin{proof}
The decategorification of $\mathbf{M}$ is one-dimensional and hence simple. Therefore, by Theorem~\ref{noSimple},
this decategorification is isomorphic to either the sign $D_n$-module $V_{-1,-1}$ or to the
trivial $D_n$-module $V_{1,1}$. 

If the decategorification of $\mathbf{M}$ is isomorphic to $V_{-1,-1}$,
we have $\mathbf{M}(\theta_s\oplus\theta_t)=0$. Therefore $\mathbf{M}$ factors through the 
quotient of $\mathscr{S}_n$ by the $2$-ideal generated by all $\theta_w$, $w\neq e$.
This quotient has a unique non-trivial indecomposable $1$-morphism, namely, the identity, up to 
isomorphism. Therefore it has strongly regular cells and thus a unique simple transitive
$2$-representation, which is exactly $\mathbf{C}_{\mathcal{L}_e}$.

If the decategorification of $\mathbf{M}$ is isomorphic to $V_{1,1}$,
we have $\mathbf{M}(\theta_{stst})\neq 0$. Then the fact that $\mathbf{M}$ is equivalent
to $\mathbf{C}_{\mathcal{L}_{w_0}}$ follows from Theorem~\ref{apexTheorem}.
\end{proof}

\subsection{The rank two case}
Our main result in this subsection is the following:

\begin{theorem}\label{no2dim}
No simple transitive $2$-representation of $\mathscr{S}_n$ has rank two.
\end{theorem}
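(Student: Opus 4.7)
The plan is to derive a contradiction from the assumption that there exists a simple transitive $2$-representation $\mathbf{M}$ of $\mathscr{S}_n$ of rank two. Since $\mathbf{C}_{\mathcal{L}_e}$ and $\mathbf{C}_{\mathcal{L}_{w_0}}$ both have rank one, the reduction at the beginning of Subsection~\ref{sStructofMat} shows that we may assume $\theta_{w_0}\mathbf{M}(\c)=0$ and $(\theta_s\oplus\theta_t)\mathbf{M}(\c)\neq 0$. Specializing Corollary~\ref{Cor5} to $r=2$ then forces, after a suitable ordering of the two isomorphism classes of indecomposables,
\[
Q := \Lparen \theta_s\oplus\theta_t\Rparen = \begin{pmatrix} 2 & b \\ b' & 2 \end{pmatrix},
\]
and transitivity combined with Proposition~\ref{transitiveIsStronglyConnected} forces $b, b' > 0$.

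Next I would analyze $V := [\mathbf{M}(\c)]^{\C}$ as a module over $\C[D_n]\cong [\mathscr{S}_n(\c,\c)]^{\C}$. A short computation using Proposition~\ref{leftMult} shows that $\underline{s}\cdot\underline{w_0} = 2\,\underline{w_0} = \underline{t}\cdot\underline{w_0}$; hence on every simple $D_n$-module the image of $\underline{w_0}$ lies in the simultaneous $2$-eigenspace of $\underline{s}$ and $\underline{t}$, which is precisely the $V_{1,1}$-isotypic part. Consequently $\underline{w_0}$ annihilates every simple $D_n$-module other than $V_{1,1}$. Since the hypothesis $\theta_{w_0}\mathbf{M}(\c)=0$ gives $\underline{w_0}\cdot V = 0$, the semisimple $\C[D_n]$-module $V$ contains no copy of $V_{1,1}$. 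Theorem~\ref{noSimple} rules out $V$ being isomorphic to a simple two-dimensional module $V^{(n,k)}$, so $V$ must split as a direct sum of two one-dimensional $D_n$-modules, each different from $V_{1,1}$.

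Finally, I would extract a numerical contradiction from the trace of $Q$. Among the remaining one-dimensional modules $V_{-1,-1}$, $V_{1,-1}$ and $V_{-1,1}$ (the last two available only when $n$ is even), the scalar by which $\underline{s}+\underline{t}$ acts is $0$, $2$ and $2$ respectively. Since $\operatorname{tr}(Q)=4$, both one-dimensional summands of $V$ must have $\underline{s}+\underline{t}$ acting as $2$, and therefore $2$ is an eigenvalue of $Q$ with multiplicity two. On the other hand, $Q$ is non-negative and irreducible, so by Theorem~\ref{PerronFrobenius} its spectral radius is a simple eigenvalue; this is the desired contradiction.

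The only step that requires real care is the preliminary lemma that $\underline{w_0}$ annihilates every simple $D_n$-module except the trivial one; once this is in place, everything reduces to a short trace computation together with a direct appeal to Perron-Frobenius.
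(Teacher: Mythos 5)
Your proof is correct, and it takes a genuinely different route from the paper's. The paper's argument is a brute-force case analysis: it enumerates \emph{all} possible decompositions of the two-dimensional $[\mathbf{M}(\c)]^{\C}$ into a pair of one-dimensional simple $D_n$-modules (three cases for $n$ odd, ten for $n$ even, \emph{including} the ones featuring $V_{1,1}$), and excludes each one separately, using irreducibility of $\Lparen\theta_s\oplus\theta_t\Rparen$, explicit matrix shapes, or the constraint that $\theta_s$ and $\theta_t$ lie in the same two-sided cell; it does not invoke the reduction at the start of Subsection~\ref{sStructofMat}, nor Corollary~\ref{Cor5}. You instead start by invoking that reduction (justified, since both $\mathbf{C}_{\mathcal{L}_e}$ and $\mathbf{C}_{\mathcal{L}_{w_0}}$ have rank one) to place yourself in the setting where Corollary~\ref{Cor5} applies, so that for $r=2$ one gets $Q=\begin{pmatrix}2&b\\b'&2\end{pmatrix}$ with $b,b'>0$, hence $\mathrm{tr}\,Q=4$; you then observe that $\underline{s}\,\underline{w_0}=\underline{t}\,\underline{w_0}=2\underline{w_0}$ forces $\underline{w_0}$ to annihilate every non-trivial simple $D_n$-module (this fact is used implicitly in the paper's proof of Lemma~\ref{L4}, but never isolated), which, combined with $\theta_{w_0}\mathbf{M}(\c)=0$, eliminates $V_{1,1}$ from the decomposition; the trace constraint $\mathrm{tr}\,Q=4$ then forces both summands to have $\underline{s}+\underline{t}$ acting by $2$, giving a double eigenvalue $2$ for $Q$ and contradicting Theorem~\ref{PerronFrobenius}. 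Your approach buys a much shorter and more conceptual argument by systematically exploiting the structural results already established (the reduction, Lemmas~\ref{L3}--\ref{L4}, Corollary~\ref{Cor5}), at the cost of a small preliminary observation about $\underline{w_0}$; the paper's approach is more elementary in flavour but considerably longer and more repetitive. One could even shortcut your last step and avoid Perron--Frobenius: with $b,b'>0$ the eigenvalues of $Q$ are $2\pm\sqrt{bb'}\neq 2$, which already contradicts the eigenvalues being $(2,2)$.
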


\begin{proof}
By Theorem~\ref{noSimple}, the decategorification of $\mathbf{M}$ is not simple
as a $\C[D_n]$-module. If $n$ is odd, we have the following three 
cases for this decategorification:
\begin{align*}
V_{1,1} \oplus V_{1,1}, \quad V_{1,1} \oplus V_{-1,-1}, \quad V_{-1,-1} \oplus V_{-1,-1}. 
\end{align*}
The first and the third case cannot occur since the matrix $\Lparen\theta_s\oplus\theta_t\Rparen$
in these cases is diagonal and thus not irreducible, which
contradicts transitivity. In the second case, we have $\Lparen\theta_s\Rparen = \Lparen\theta_t\Rparen$, 
moreover, both matrices have determinant $0$ and trace $2$. Therefore each power of
$\Lparen\theta_s\oplus\theta_t\Rparen$ is a scalar multiple of $\Lparen\theta_s\Rparen$. Similarly to the
proof of Lemma~\ref{L3}, one shows that the diagonal entries of $\Lparen\theta_s\Rparen$ can only
be $0$ or $2$. Since both rank and trace are $2$, $\Lparen\theta_s\Rparen$ contains
a zero diagonal entry. Therefore any power of $\Lparen\theta_s\oplus\theta_t\Rparen$ contains a
zero entry which again contradicts transitivity. 

If $n$ is even, we have the three cases above and they all are excluded by the same arguments
as above. However, we also have the following seven extra cases:
\begin{align*}
& V_{1,1} \oplus V_{1,-1},\quad  V_{1,1} \oplus V_{-1,1},\quad 
V_{1,-1} \oplus V_{1,-1},\quad  V_{1,-1} \oplus V_{-1,1},\\
& V_{1,-1} \oplus V_{-1,-1},\quad  V_{-1,1} \oplus V_{-1,1}, \quad  V_{-1,1} \oplus V_{-1,-1}
\end{align*}
The cases where the same simple is repeated (the third case in the first row and
the second case in the second row) again lead to a diagonal matrix $\Lparen\theta_s\oplus\theta_t\Rparen$ 
and hence are excluded by the same argument as in the previous paragraph. Similarly, the
last case in the first row leads to a diagonal matrix $\Lparen\theta_s\oplus\theta_t\Rparen$ and thus
is excluded.

In the first case of the second row we have $\mathbf{M}(\theta_t)=0$ while $\mathbf{M}(\theta_s)\neq 0$.
This is not possible as $\theta_t$ and $\theta_s$ belong to the same two-sided cell.
A similar argument with $s$ and $t$ swapped excludes the third case in the second row.

This leaves us with the first two cases in the first row. By symmetry, it is enough to show
that we can exclude the first case. In it $\Lparen\theta_s\Rparen$ is twice the identity matrix while
$\Lparen\theta_t\Rparen$ has determinant $0$ and trace $2$. As we mentioned above, the only possibility for
the diagonal entries of $\Lparen\theta_t\Rparen$ are $0$ and $2$. Therefore, up to permutation of basis vectors,
the matrix $\Lparen\theta_t\Rparen$ has one the following two forms:
\begin{displaymath}
\left(\begin{array}{cc}2&0\\x&0\end{array}\right)\quad\text{ or }\quad
\left(\begin{array}{cc}2&x\\0&0\end{array}\right).
\end{displaymath}
In this case the matrix $\Lparen\theta_s\oplus\theta_t\Rparen$
is either upper or lower triangular and hence not irreducible. 
This contradicts transitivity of $\mathbf{M}$ and completes the proof.
\end{proof}
\nopagebreak

{\bf Acknowledgments.}  The author is very grateful to his supervisor 
Volodymyr Mazorchuk who helped with a lot of ideas and patiently answered all  questions. 
Moreover, he wants to thank the referees for pointing out a gap in the original proof of 
Theorem~\ref{noSimple} as well as for many comments which helped to improve the paper greatly.

\end{document}